\providecommand{\bysame}{\leavevmode ---\ } \providecommand{\og}{``}
\providecommand{\fg}{''} \providecommand{\smfandname}{and}
\providecommand{\smfedsname}{\'eds.}
\providecommand{\smfedname}{\'ed.}
\providecommand{\smfmastersthesisname}{M\'emoire}
\providecommand{\smfphdthesisname}{Th\`ese}
\def\1{\hbox{1\kern-.35em\hbox{1}}}
\newtheorem{theorem}{Theorem}[section]
\newtheorem*{theorem*}{Theorem}
\newtheorem{lemma}[theorem]{Lemma}
\newtheorem{proposition}[theorem]{Proposition}
\newtheorem*{proposition*}{Proposition}
\newtheorem{corollary}[theorem]{Corollary}
\newtheorem{coro}[theorem]{Corollary}
\theoremstyle{definition}
\newtheorem{definition}[theorem]{Definition}
\theoremstyle{remark}
\newtheorem{remark}[theorem]{Remark}
\numberwithin{equation}{section}
\newcommand{\bea}{\begin{eqnarray}}
\newcommand{\eea}{\end{eqnarray}}
\newcommand{\be}{\begin{eqnarray*}}
\newcommand{\ee}{\end{eqnarray*}}
\def\t{{\mathfrak t}}
\def\s{{\mathfrak s}}
\newcommand{\Hom}{{\rm Hom}}
\def\SUBS#1{}
\def\a{\alpha}
\def\d{\delta}
\def\m{\mathfrak m}
\def\n{\mathfrak n}
\def\End{\text{End}}
\def\Std{\mathcal T^{s}}
\def\Hom{{\rm Hom}}
\def\s{\mathfrak s}
\def\Set{{\rm Set}}
\def\a{\alpha}
\def\wtt{{\rm wt}}
\def\Set[#1]#2|#3|{\Big\{\ #2\ \Big| \
           \vcenter{\hsize #1mm\centering #3}\Big\}}
\numberwithin{equation}{section}
\title[Decomposition matrices  of Birman-Murakami-Wenzl algebras  ]{Decomposition matrices  of Birman-Murakami-Wenzl algebras}
\author{Hebing Rui  {\normalfont \smfandname} Linliang \vspace*{-8pt}Song}
\address{H.~Rui: Department of Mathematics,  East China Normal
University, Shanghai, 200241, China} \email{hbrui@math.ecnu.edu.\vspace*{-5pt}cn}
\address{L.~Song: Mathematics and Science College,  Shanghai Normal University, Shanghai, 200234, China }
\email{song51090601020@163.com}
\thanks{Rui is supported by NSFC (grant no.~11025104)}
\begin{document}
\baselineskip15pt
\begin{abstract} In this paper, we calculate decomposition matrices of  the Birman-Murakami-Wenzl algebras  over $\mathbb C$.
 \end{abstract}
\sloppy \maketitle

\section{Introduction} One of key problems  in studying structure of  a finite dimensional algebra is to determine its semisimple quotient. This leads to calculate dimensions of its  simple modules.
 In this paper, we address this problem on a Birman-Murakami-Wenzl algebra over $\mathbb C$ by determining its decomposition numbers.

Recall that  Birman-Murakami-Wenzl algebras  are  unital  associative $R$-algebras introduced in~\cite{BM, Mu}, where $R$ is a  commutative ring  containing $1$ and invertible elements $\varrho, q$ and $q-q^{-1}$.
Suppose $R$ is a field $\kappa$. If  $\varrho\not\in \{ q^a, -q^a\mid a\in \mathbb Z\}$,
Rui and Si~\cite{RSsin} proved that
    $\mathscr B_r(\varrho, q)$ is Morita equivalent to $\bigoplus_{i=0}^{\lfloor r/2\rfloor} \mathscr H_{r-2i}$
    where $\mathscr H_{r-2i}$ are  Hecke algebras associated to  symmetric groups $\mathfrak S_{r-2i}$.
    In non-semisimple cases and $\kappa=\mathbb C$, by Ariki's result on decomposition numbers of Hecke algebras  in \cite{Ari},  decomposition numbers of   $\mathscr B_r(\varrho, q)$
are determined by the values of certain inverse Kazhdan-Lusztig polynomials at $q=1$
associated to some extended affine Weyl groups of type $A$. If $\varrho\in \{q^a, -q^a\}$ for some  $a\in \mathbb Z$ and if  $q^2$ is not a root of unity, Rui and Si
classified blocks of $\mathscr B_r(\varrho, q)$ over $\kappa$ \cite{RSbloc}. Via such results together with Martin's arguments on the decomposition matrices of Brauer algebras over $\mathbb C$ in \cite{Mart},
 Xu showed  that $\mathscr B_r(\varrho, q)$ is multiplicity-free over $\mathbb C$~\cite{Xu}. In other words, the multiplicity of a simple module in a cell (or standard) $\mathscr B_r(\varrho, q)$-module  is either $1$ or $0$ if $\kappa$ is $\mathbb C$.

 The aim of this paper is to calculate  decomposition matrices  of  $\mathscr B_r(\varrho, q)$ over $\mathbb C$ when
 $\varrho\in \{ -q^a, q^a\} $ for some  $a\in \mathbb Z$ and $q^2$ is a root of unity.
 In this case, it is enough to assume  either $\varrho=-q^{2n+1}$ or $\varrho =q^n$ for some $n\in \mathbb Z$ such that  $n\gg 0$.
In the first case, Hu~\cite{Hu} proved that there is an  integral Schur-Weyl duality between $\mathscr B_r(-q^{2n+1}, q)$ and  the quantum group $\mathbf U(\mathfrak{sp}_{2n})$ associated to $\mathfrak{sp}_{2n}$.
In particular, Hu proved that  $\mathscr B_r(-q^{2n+1}, q)$ is isomorphic to $\End_{\mathbf U(\mathfrak {sp}_{2n} )}(V^{\otimes r})$ if $n\ge r$, where $V$ is the natural representation of $\mathbf U(\mathfrak{sp}_{2n})$.
Moreover, Hu's arguments in \cite{Hu}  can be used smoothly to prove that   $\mathscr B_r(q^n, q)$ is isomorphic to $\End_{\mathbf U(\mathfrak g)}(V^{\otimes r})$ if $\lfloor \frac {n+1}{2}\rfloor> r$, where $V$ is the natural representation of $\mathbf U(\mathfrak{so}_{n+1})$. Motivated by our work on quantized walled Brauer algebras in \cite{RSong1}, we establish  an  explicit relationship between  decomposition numbers of  $\mathscr B_r(\varrho,  q)$ with $\varrho\in \{-q^{2n+1}, q^{n}\}$ and the multiplicities of Weyl modules in indecomposable direct summands of $V^{\otimes r}$ (called partial tilting modules). When the ground field is $\mathbb C$ and $e$, the order of  $q^2$ is bigger than $29$,  such multiplicities have been given  in~\cite{Soe}\footnote{Soergel needs the equivalence of categories between modules for quantum groups at roots of unity and corresponding module categories for Kac-Moody algebras in \cite{Soe1}. Due to \cite{KL1}, this equivalence  is only proved when $e \geq 29$. Thanks Professor H.H.~Andersen for his explanation.}.  Suppose $e=\infty$. By arguments similar to those in \cite{DT},  the decomposition matrices of $\mathscr B_r(\varrho, q)$ are the same as those for $\mathscr B_r(\varrho, q)$ with $e\gg 0$. In particular, we recover   \cite[Theorem~5.6]{Wen2} by assuming that $\varrho=-q^{2n+1} $.

We organize this paper   as follows. In section~2, after recalling some well known results on quantum groups,  we use Hu's arguments in \cite{Hu} to  show that  $\mathscr B_r(q^n, q)$ is isomorphic to  $\End_{\mathbf U(\mathfrak {so}_{n+1})} (V^{\otimes r})$ if $\lfloor\frac{ n+1} {2}\rfloor >r$,  where $V$ is the natural representation of $\mathbf U(\mathfrak {so}_{n+1})$.  In section~3,  we prove that $V^{\otimes r}$ is self-dual as $(\mathbf U(\mathfrak g), \mathscr B_r(\varrho, q))$-bimodule where $\mathfrak g\in \{\mathfrak{sp}_{2n}, \mathfrak {so}_{2n},  \mathfrak {so}_{2n+1}\}$ and  $\varrho$ is given  in \eqref{varrho}.  In section~4, we classify highest weight vectors of $V^{\otimes r}$.  This leads us to  establish  an explicit
 relationship between decomposition numbers of  $\mathscr B_r(\varrho, q)$ with some special parameters $\varrho$ in \eqref{varrho} and the multiplicities of Weyl modules in indecomposable tilting modules for $\mathbf U(\mathfrak g)$. So, we can use Soergel's results in \cite{Soe, Soe1} to calculate decomposition numbers of  Birman-Murakami-Wenzl algebras over $\mathbb C$. Together with some previous results, we settle the problem on decomposition matrices of $\mathscr B_r(\varrho, q)$ over $\mathbb C$ under the assumption  $e\ge 29$.

\section{Schur-Weyl duality in classical types}
 Throughout, let  $\mathcal A=\mathbb Z[v, v^{-1}]$ with the quotient field $\mathbb Q(v)$ where $v$ is an indeterminate. For any $n\in \mathbb N$,
let
\begin{equation}\label{qinte}  [n]=\frac{v^n-v^{-n}}{v-v^{-1}}. \end{equation}
 For $m, n, d\in \mathbb N$,  following \cite{Lusz1}, define
 \begin{equation} \label{gus}  [n]^{!}_d:=\prod_{i=1}^n  \frac{v^{di}-v^{-di}}{v^d-v^{-d}}, \quad
 \left[{m+n\atop n}\right]_d=\frac{[m+n]^!_d}{[m]^!_d [n]^!_d}\in \mathcal A.\end{equation}
The Cartan matrix is an $n\times n$ matrix $A=(a_{ij})$  with entries $a_{ij}\in \mathbb Z$, $1\le i,j\le n$ such that $(d_i a_{ij})$ is symmetric and positive definite, where $d_{i}\in \{1, 2, 3\}$ and
  $a_{ii}=2$ and $a_{ij}\le 0$ for $i\neq j$.
The quantum group  $U_v$ associated with $A$ is  the associative $\mathbb Q(v)$-algebra generated by $\{e_i, f_i, k_i^{\pm 1}\mid  1\le i\le n\}$ subject to the relations:
\begin{equation}\label{qgroup}\begin{cases} & k_i k_i^{-1}=1=k_{i}^{-1} k_i, \quad k_i k_j=k_j k_i,\\
&  k_i e_j k_i^{-1}=v^{d_ia_{ij}} e_j,\\
& k_i f_j k_i^{-1}=v^{-d_ia_{ij}} f_j,\\
&  e_i f_j-f_j e_i=\delta_{ij} \frac{k_i-k_i^{-1}}{v^{d_i} -v^{-d_i}}, \\
&\sum_{s=0}^{1-a_{ij} } (-1)^s [{1-a_{ij}\atop s}]_{d_i} e_i^{1-a_{ij}-s} e_j e_i^s=0, \text{if   $i\neq j$,}\\
&\sum_{s=0}^{1-a_{ij} } (-1)^s [{1-a_{ij} \atop s}]_{d_i} f_i^{1-a_{ij}-s} f_j f_i^s=0, \text{if   $i\neq j$,}\\
\end{cases}\end{equation}
where $\delta_{ij}$ is the Kronecker delta.
 It is known that $U_v$ is a Hopf algebra with the comultiplication $\Delta$, counit $\epsilon$ and antipode $S$ defined by
\begin{equation}\label{coalg} \begin{aligned}  & \Delta(e_i)=e_i\otimes 1+ k_i\otimes e_i,\  \ \ \epsilon(e_i)=0,\  S(e_i)=-k_i^{-1} e_i,\\
& \Delta(f_i)=f_i\otimes k_i^{-1} + 1\otimes f_i,\  \epsilon(f_i)=0, \ S(f_i)=-f_i k_i,\\
& \Delta(k_i) =k_i\otimes k_i, \ \quad \quad \quad \quad \  \epsilon(k_i)=1,\   S(k_i)= k_i^{-1}. \\ \end{aligned}\end{equation}
For all positive integers  $k$, following \cite{Lusz1}, let \begin{equation}\label{int} e_i^{(k)}=e_i^{k}/[k]^!_{d_i}, \text{ and }  f_i^{(k)}=f_i^{k}/[k]^!_{d_i}.\end{equation}   Then  $U_v$ contains the $\mathcal A$-subalgebra
$\mathbf U$ generated by $\{e_i^{(k)}, f_i^{(k)}, k_i^{\pm 1}\mid   1\le i\le  n, k\in  \mathbb Z^{>0}\}$.
 Further, $\mathbf U$ is a Hopf algebra such that comultiplication, counit  and antipode are obtained from those for $U_v$ by restrictions.

In this paper, we consider quantum groups associated with  complex semisimple Lie algebras $\mathfrak g\in \{\mathfrak{sl}_{n+1}, \mathfrak {so}_{2n+1}, \mathfrak {sp}_{2n}, \mathfrak {so}_{2n}\}$.
 According to \cite{Bou}, we have
 the root systems for $\mathfrak g$ so that $\epsilon_1, \epsilon_2, \cdots, \epsilon_n$  are orthonormal and if $\mathfrak g=\mathfrak{sl}_{n+1}$,  also include $\epsilon_{n+1}$.
Let $\Pi=\{\a_i\mid 1\le i\le n\}$, where $\alpha_i=\epsilon_i-\epsilon_{i+1}$ for $1\le i\le n-1$ and
\begin{equation} \label{simplerootn} \a_n=\begin{cases} \epsilon_n-\epsilon_{n+1}, &\text{if
$\mathfrak g=\mathfrak{sl}_{n+1}$,}\\
\epsilon_n,    &\text{if
$\mathfrak g=\mathfrak{so}_{2n+1}$,}\\
2\epsilon_n,    &\text{if
$\mathfrak g=\mathfrak{sp}_{2n}$,}\\
\epsilon_{n-1}+\epsilon_{n},    &\text{if
$\mathfrak g=\mathfrak{so}_{2n}$.}\\
\end{cases}\end{equation}
Then $\Pi$ is a  set of  simple roots associated with $\mathfrak g$.
 The weight lattice $P$  is $\oplus_{i=1}^n\mathbb Z \omega_i$, where $\omega_i$'s are  fundamental weights  given by
 \begin{enumerate} \item $\omega_i=\epsilon_1+\cdots+\epsilon_i-\frac {i}{n+1}(\epsilon_1+\cdots+\epsilon_{n+1})$, $1\le i\le n$ if $\mathfrak g=\mathfrak{sl}_{n+1}$,
 \item $\omega_i=\epsilon_1+\cdots+\epsilon_i$, $1\le i\le n-1$, and $\omega_n=\frac{1}{2} (\epsilon_1+\cdots+\epsilon_n)$  if $\mathfrak g=\mathfrak{so}_{2n+1}$,
 \item $\omega_i=\epsilon_1+\cdots+\epsilon_i$, $1\le i\le n$, if $\mathfrak g=\mathfrak{sp}_{2n} $,
 \item $\omega_i= \epsilon_1+\cdots+\epsilon_i$, $1\le i\le n-2$, $\omega_{n-1}=\frac{1}{2} (\epsilon_1+\cdots+\epsilon_{n-1}-\epsilon_n)$ and
  $\omega_n=\frac{1}{2} (\epsilon_1+\cdots+\epsilon_n)$  if $\mathfrak g=\mathfrak{so}_{2n}$. \end{enumerate}
Let $P^+=\oplus_{i=1}^n \mathbb N \omega_i$. Then $P^+$ is the set of all dominant integral weights.
For $\alpha_i, \a_j\in \Pi$, let  $$a_{ij}=\langle \alpha_i, \a_j\rangle=2(\alpha_i, \a_j)/(\a_i, \a_i),$$ where  $( \ , \ )$ is the symmetric bilinear form such that $(\epsilon_i, \epsilon_j)=\d_{ij}$.
The Cartan matrix $A$  associated with $\mathfrak g$   is  the $n\times n$ matrix $(a_{ij})$, which is the transpose of that in \cite{Bou}. So, the quantum groups $U_v(\mathfrak g)$ associated with $\mathfrak g$ defined in~\eqref {qgroup}  are the same as those  in \cite{ha}. They are associative algebras over $\mathbb Q(v)$   such that  $v=q^{1/2}$ if $\mathfrak g=\mathfrak{so}_{2n+1}$ and
$v=q$, otherwise.  Further,
\begin{enumerate} \item $d_i=1$, $1\le i\le n$ if $\mathfrak g\in \{\mathfrak{sl}_{n+1}, \mathfrak{so}_{2n}\}$,
\item $d_i=2$, $1\le i\le n-1$ and $d_n=1$ if $\mathfrak g=\mathfrak{so}_{2n+1}$,
\item $d_i=1$, $1\le i\le n-1$ and $d_n=2$ if $\mathfrak g=\mathfrak{sp}_{2n}$.\end{enumerate}
 If $M$ is a $ U_v(\mathfrak g)$-module, let \begin{equation}\label{weight}  M_{\lambda}=\{m\in M\mid k_i m=v_i^{\langle\lambda,  \alpha_i\rangle}m, 1\le i\le n\},  \ \  \text{for any $\lambda\in P$,}\end{equation}
where $v_i=v^{d_i}$ and ${\langle\lambda,  \alpha_i\rangle}=2(\lambda, \alpha_i)/(\alpha_i, \alpha_i)$.
Then $M_\lambda$ is called  the weight space of $M$ with respect to the weight   $\lambda$ if $M_\lambda\neq 0$.
  For any field $\kappa$ which is an $\mathcal A$-algebra, let $\mathbf U_{\kappa}(\mathfrak g)=\mathbf U(\mathfrak g)\otimes_{\mathcal A} \kappa$, where $\mathbf U(\mathfrak g)$ is the $\mathcal A$-form of $U_v(\mathfrak g)$.
If $M$ is a $\mathbf U_\kappa(\mathfrak g)$-module, the weight space of $M$ can be defined  by base change. Later on, we write $$\wtt(m)=\lambda \ \text{if $m\in M_\lambda$.}$$
In the remaining part of this paper,  we always assume that
\begin{equation}\label{N}  N=\begin{cases} n, & \text{ if $\mathfrak g=\mathfrak {sl}_{n} $,}\\  2n+1, & \text{ if $\mathfrak g=\mathfrak {so}_{2n+1} $,}\\
 2n, & \text{ if $\mathfrak g=\mathfrak{sp}_{2n}, \mathfrak {so}_{2n}$.}\\
  \end{cases}\end{equation}
If $\mathfrak g=\mathfrak{so}_{2n+1}$,  we write $ i'=2n+2-i$,   $1\le i\le n+1$,
and hence \begin{equation} \label{tot1} 1<2<\cdots< n<n+1< n'<\cdots < 1'.\end{equation}
If $\mathfrak g\in \{\mathfrak{sp}_{2n}, \mathfrak{sp}_{2n}\}$, we write $  i'=2n-i+1$,   $1\le i\le n$, and hence
\begin{equation} \label{tot2} 1<2<\cdots< n< n'<\cdots < 1'.\end{equation}
In any case, we set $ k''=k$ for any $1\le k\le n$. If   $\mathfrak g=\mathfrak{so}_{2n+1}$,
$(n+1)'=n+1$.
Unless otherwise state, we always assume that $\kappa$ is a field which is an $\mathcal A$-algebra  such that $v$ acts on $\kappa$ via  $q\in \kappa^*$  (resp., $q^{1/2}\in \kappa^*$ if $\mathfrak g=\mathfrak{so}_{2n+1}$).
\begin{lemma}\label{basicl} Let $V=\bigoplus_{i=1}^N \kappa v_i$, where  $N$ is given  in \eqref{N}. Then $V$ is a left $\mathbf U_\kappa(\mathfrak g)$-module such that the following conditions hold.
 \begin{enumerate}  \item  If $\mathfrak g=\mathfrak{sl}_{n}$, then  \begin{itemize}\item [(a)]  $ e_i v_k  =\delta_{k, i+1} v_i$,
 \item [(b)] $ f_i v_k =\delta_{i,k} v_{i+1}$,
 \item [(c)] $k_iv_k= q^{\epsilon}  v_k$, where
   $\epsilon=1$ (resp., $-1$) if $k=i$ (resp.,~$i+1$) and $\epsilon=0$ in the remaining cases.\end{itemize}
   \item  If $\mathfrak g=\mathfrak{so}_{2n+1}$, then  for $i\neq n$,  \begin{itemize}
   \item [(a)]  $e_i v_{i+1}= v_i$,  $e_i v_{i'}= -v_{(i+1)'}$ and  $e_i v_k=0$, otherwise,
   \item [(b)] $f_i v_i= v_{i+1}$, $f_iv_{(i+1)'}=-v_{{i'}}$, and  $f_i v_k=0$ otherwise,
    \item [(c)]  $k_i v_k= q v_k $ (resp.,   $  q^{-1}v_{k}$ ) if $k\in \{i,  (i+1)'\}$ (resp., $k\in \{i+1, i'\}$), and $k_i v_k=v_k$, otherwise,
    \item [(d)]  $e_n v_{n+1} = v_n$, $e_{n}v_{n'} =- q^{-1/2}  v_{n+1}$,  and   $e_n v_k=0$, otherwise,
       \item [(e)]  $f_n v_n = [2]_{ q^{1/2}}  v_{n+1}$, $f_n v_{n+1}  =- q^{1/2} [2]_{ q^{1/2}}  v_{n'}$ and   $f_n v_k=0$, otherwise,
   \item  [(f)] $k_n v_n= q v_{n}$, $k_n v_{n'}=  q^{-1}v_{ n'}$ and   $k_n v_k=v_k$, otherwise.
 \end{itemize}
 \item  If $\mathfrak g=\mathfrak{sp}_{2n}$, then  for $i\neq n$,   \begin{itemize}\item [(a)] $e_i v_k$, $f_iv_k$ and $k_i v_k$   satisfy the formulae in
    (2a)--(2c),  respectively,
   \item [(b)] $ e_n v_{n'}=v_n$ and $e_n v_k=0$, otherwise,
    \item [(c)] $f_n v_n=v_{ n'}$ and $f_n v_k=0$, otherwise,
    \item [(d)] $k_n v_n=  q^2 v_{n}$,  $k_n v_{n'} =  q^{-2}  v_{n'}$ and  $k_n v_k=v_k$, otherwise.
    \end{itemize}
\item    If $\mathfrak g=\mathfrak{so}_{2n}$, then  for $i\neq n$,
 \begin{itemize}\item [(a)]  $e_i v_k$, $f_iv_k$ and $k_i v_k$ satisfy the formulae in
    (2a)--(2c), respectively,
\item [(b)]  $e_n v_{ n'} =v_{n-1}$, $e_n v_{(n-1)'}=-v_{n}$ and $e_n v_k=0$, otherwise,
 \item [(c)]  $f_n v_{n-1} =v_{ n'}$, $f_n v_{n} =-v_{(n-1)'} $ and $f_n v_k=0$, otherwise,
\item[(d)]  $k_n v_k=  q v_{k}$ (resp.,  $ q^{-1}v_{k}$)  if $k\in \{n-1, n\}$ (resp., $ \{(n-1)',  n'\}$), and
$k_n v_k=v_k$, otherwise.
\end{itemize}
\end{enumerate}
 \end{lemma}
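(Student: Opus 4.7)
The plan is to verify directly that the prescribed formulas define a well-defined $\mathbf{U}_\kappa(\mathfrak{g})$-module structure on $V$, by checking each of the defining relations in~\eqref{qgroup}. In each classical type, $V$ is the natural representation, so its action is essentially forced by the weight decomposition, and the formulas in the statement are the unique ones (up to normalization) compatible with that decomposition.

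I would first fix weights: $\mathrm{wt}(v_k)=\epsilon_k$ for $1\le k\le n$, $\mathrm{wt}(v_{k'})=-\epsilon_k$ for $1\le k\le n$, and, when $\mathfrak{g}=\mathfrak{so}_{2n+1}$, $\mathrm{wt}(v_{n+1})=0$. Using~\eqref{simplerootn} together with the list of $d_i$ just beneath it, the formulas for $k_iv_k$ in (1c), (2c), (2f), (3d), (4d) (and via the references to (2c) inside (3a) and (4a)) are precisely those prescribed by $k_iv_k=v^{d_i\langle\mathrm{wt}(v_k),\alpha_i\rangle}v_k$, provided one remembers the convention $v=q^{1/2}$ in type $B$. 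Next I would observe that in each case $e_iv_k$ shifts the weight by $\alpha_i$ and $f_iv_k$ by $-\alpha_i$, which makes the relations $k_ie_jk_i^{-1}=v^{d_ia_{ij}}e_j$ and $k_if_jk_i^{-1}=v^{-d_ia_{ij}}f_j$ automatic.

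The substantive relations are the quantum Serre relations and $e_if_j-f_je_i=\delta_{ij}(k_i-k_i^{-1})/(v^{d_i}-v^{-d_i})$. For $i\ne j$, on each basis vector $v_k$ at most one of $e_if_j,\,f_je_i$ is nonzero and the two outputs coincide, so the commutator vanishes. For $i=j$, a short case-by-case evaluation on each $v_k$ produces the scalar $(v^{d_i\langle\mathrm{wt}(v_k),\alpha_i\rangle}-v^{-d_i\langle\mathrm{wt}(v_k),\alpha_i\rangle})/(v^{d_i}-v^{-d_i})$, matching the right hand side; the coefficient $[2]_{q^{1/2}}=v+v^{-1}$ appearing in (2e) is pinned down precisely by this requirement, since in type $B$ one has $d_n=1$ and $v=q^{1/2}$, so $[e_n,f_n]$ must return $v+v^{-1}$ on $v_n$. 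The Serre relations are trivial when $a_{ij}=0$; when $a_{ij}=-1$, or (for the pair $(n-1,n)$ in types $B$ and $C$) $|a_{ij}|=2$, only a handful of basis vectors contribute nonzero terms, and a direct expansion of $e_i^2e_j,\,e_ie_je_i,\,e_je_i^2$ on these suffices.

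The main obstacle is bookkeeping: the signs in the orthogonal cases (the minus signs in (2a), (2b), (2d), (2e), (4b), (4c)), the distinction between $d_n=1$, $v=q^{1/2}$ in type $B$ and $d_n=2$, $v=q$ in type $C$, and the uniform handling of the formulas for $i\ne n$ across types $B, C, D$ via the references (3a) and (4a) to (2a)--(2c). Once the relations hold over $\mathbb{Q}(v)$, every coefficient in the formulas already lies in $\mathcal{A}=\mathbb{Z}[v,v^{-1}]$ (with $[2]_{q^{1/2}}=v+v^{-1}\in\mathcal{A}$), so the prescription defines an action of the integral form $\mathbf{U}(\mathfrak{g})$; base change via $v\mapsto q$ (resp.\ $q^{1/2}$) then yields the desired $\mathbf{U}_\kappa(\mathfrak{g})$-module structure.
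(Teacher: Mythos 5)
Your strategy --- verify the relations \eqref{qgroup} directly over $\mathbb{Q}(v)$ (resp.\ $\mathbb{Q}(q^{1/2})$ in type $B$), then descend to $\mathbf{U}_\kappa(\mathfrak{g})$ by base change --- is correct in outline, but it takes a different first step from the paper, whose entire proof is a citation: for $\kappa=\mathbb{Q}(v)$ the formulas are attributed to Hayashi's (4.16), and the general case is deduced from the $\mathcal{A}$-lattice spanned by the $v_i$. Your direct verification is more self-contained and is in fact the more defensible route here, since (as the paper's own footnote admits) Hayashi's type-$B$ formulas violate $e_nf_n-f_ne_n=(k_n-k_n^{-1})/(v-v^{-1})$ and items (2d)--(2e) of the lemma are a correction of them; your remark that the coefficient $[2]_{q^{1/2}}$ in (2e) is pinned down by this relation is exactly the content of that correction. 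The weight-space reduction of the $k_ie_jk_i^{-1}$ relations and the observation that $e_i^2=0$ on $V$ outside the type-$B$ node $i=n$ (which trivializes the Serre relations) are both sound.

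The one step that is genuinely too quick is the descent to the integral form. By definition $\mathbf{U}(\mathfrak{g})$ is generated by the divided powers $e_i^{(k)}=e_i^k/[k]^!_{d_i}$ and $f_i^{(k)}$, so the fact that the matrix entries of $e_i$ and $f_i$ lie in $\mathcal{A}$ does not make $\bigoplus_i\mathcal{A}v_i$ a $\mathbf{U}(\mathfrak{g})$-submodule; one must check that the divided powers preserve the lattice, and for $\mathfrak{g}=\mathfrak{so}_{2n+1}$ this fails for the naive lattice: by (2d) one has $e_n^2v_{n'}=-q^{-1/2}v_n$, hence $e_n^{(2)}v_{n'}=-q^{-1/2}[2]_{q^{1/2}}^{-1}v_n\notin\mathcal{A}v_n$. (The paper's proof asserts the lattice property without comment and carries the same flaw.) The repair is mild: the sublattice $\bigoplus_{i\le n}\mathcal{A}v_i\oplus\mathcal{A}\,[2]_{q^{1/2}}v_{n+1}\oplus\bigoplus_{i}\mathcal{A}\,[2]_{q^{1/2}}v_{i'}$ is stable under all $e_i^{(k)}$ and $f_i^{(k)}$ (all divided powers with $k\ge 3$ vanish on $V$, and in types $C$, $D$ even $e_i^2=f_i^2=0$), and since $q-q^{-1}$ is invertible in $\kappa$ the scalar $[2]_{q^{1/2}}$ is a unit there, so this sublattice induces the asserted $\mathbf{U}_\kappa(\mathfrak{g})$-structure on $V=\bigoplus_i\kappa v_i$. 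You should either make this adjustment explicit or restrict the integrality claim to types $C$ and $D$; as written, ``every coefficient lies in $\mathcal{A}$'' does not deliver the conclusion.
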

\begin{proof} When $\kappa=\mathbb Q(v)$,  Lemma~\ref{basicl}(1)--(4) have been given in \cite[(4.16)]{ha}\footnote{If $\mathfrak g=\mathfrak{so}_{2n+1}$ and  $\kappa=\mathbb Q(q^{1/2})$,
 there is a difference between (2) and that in \cite[(4.16)]{ha}, where  Hayashi defined $e_n v_k=0$ unless $k\in \{n+1, n'\}$ and $e_n v_{n+1}=q^{1/2} v_n$, $e_n v_{n'}=-v_{n+1}$ and   $f_n v_k=0$ unless $k\in \{n, n+1\}$ and $f_n v_{n+1}=- v_{n'}$, $f_n v_{n}=q^{-1/2}v_{n+1}$. In this case, $(e_nf_n-f_ne_n) (v_n)\neq (\frac{k_n-k_n^{-1}}{q^{1/2}-q^{-1/2}})(v_n)$.}.
 In general, since  $V$ has a $\mathcal A$-lattice spanned by $\{v_i\mid 1\le i\le N\}$, which is a left $\mathbf U(\mathfrak g)$-module, the result follows from arguments on base change.  \end{proof}

The $\kappa$-space $V$ in Lemma~\ref{basicl} is known as the \textsf{natural representation} of $\mathbf U_\kappa(\mathfrak g)$.

\begin{coro} Let  $V=\bigoplus_{i=1}^N \kappa v_i$ be  the {natural representation} of $\mathbf U_\kappa(\mathfrak g)$.\begin{enumerate}
 \item If $\mathfrak g=\mathfrak{ sl}_n$, then $\wtt (v_i)=\epsilon_i-\frac 1 n\sum_{i=1}^n \epsilon_i$. \item If
 $\mathfrak g=\{\mathfrak{so}_{2n+1}, \mathfrak{sp}_{2n},  \mathfrak{so}_{2n}\}$  then $\wtt (v_i)=-\wtt (v_{ i'})=\epsilon_i$, $1\le i\le n$. Further, if  $\mathfrak g=\mathfrak{so}_{2n+1}$, then
  $\wtt (v_{n+1})=0$.\end{enumerate}
 \end{coro}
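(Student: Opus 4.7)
The plan is to verify the stated weights directly, by using the formula $\wtt(m)=\lambda$ iff $k_j m=v_j^{\langle\lambda,\alpha_j\rangle} m$ for all $j$, and checking that the eigenvalues read off from Lemma~\ref{basicl} match those predicted by the proposed $\lambda$. For each claimed weight $\lambda$, I will compute $\langle\lambda,\alpha_j\rangle=2(\lambda,\alpha_j)/(\alpha_j,\alpha_j)$ using the orthonormality of $\{\epsilon_i\}$ and the simple roots listed in \eqref{simplerootn}, and then compare $v_j^{\langle\lambda,\alpha_j\rangle}$ (with $v_j=v^{d_j}$ and $v=q$ or $q^{1/2}$ depending on the type) with the scalar by which $k_j$ acts on $v_i$ in Lemma~\ref{basicl}.

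In type $A_{n-1}$, take $\lambda=\epsilon_i-\tfrac{1}{n}\sum_{k=1}^n\epsilon_k$. Since $\alpha_j=\epsilon_j-\epsilon_{j+1}$ is orthogonal to $\sum_k\epsilon_k$, one has $(\lambda,\alpha_j)=\delta_{ij}-\delta_{i,j+1}$, and $(\alpha_j,\alpha_j)=2$, so $\langle\lambda,\alpha_j\rangle=\delta_{ij}-\delta_{i,j+1}$; together with $v_j=q$ this matches Lemma~\ref{basicl}(1c) verbatim. In the other three types, for $j<n$ the root $\alpha_j=\epsilon_j-\epsilon_{j+1}$ is again of type $A$, so the same computation (done once) gives $\langle\pm\epsilon_i,\alpha_j\rangle=\pm(\delta_{ij}-\delta_{i,j+1})$; matching against parts (2c), (3a), (4a) of Lemma~\ref{basicl} (together with the convention $v_{i'}=v_{2n+1-i}$ or $v_{2n+2-i}$) verifies everything except the action of $k_n$.

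For the node $n$, I treat the three remaining types separately: in type $B_n$ one has $\alpha_n=\epsilon_n$, $d_n=1$ and $v=q^{1/2}$, so $\langle\epsilon_i,\alpha_n\rangle=2\delta_{in}$ and $v_n^{2\delta_{in}}=q^{\delta_{in}}$, matching (2f); the vector $v_{n+1}$ is untouched by $k_n$ (and by every $k_j$ with $j\neq n$, by (2c)), giving $\wtt(v_{n+1})=0$. In type $C_n$ one has $\alpha_n=2\epsilon_n$, $(\alpha_n,\alpha_n)=4$, $d_n=2$, hence $\langle\epsilon_i,\alpha_n\rangle=\delta_{in}$ and $v_n^{\delta_{in}}=q^{2\delta_{in}}$, which agrees with (3d). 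In type $D_n$ one has $\alpha_n=\epsilon_{n-1}+\epsilon_n$, $d_n=1$, hence $\langle\epsilon_i,\alpha_n\rangle=\delta_{i,n-1}+\delta_{in}$, matching (4d). The corresponding checks for the weights $-\epsilon_i$ on the $v_{i'}$ are identical with signs reversed.

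Essentially the whole argument is bookkeeping; the only real pitfall is tracking the distinction between $v$ and $q$ in type $B_n$ and the correct value of $d_n$ in types $B_n$ and $C_n$, which together conspire to make the scaling $v_n^{\langle\lambda,\alpha_n\rangle}=q^{\langle\lambda,\epsilon_n\rangle}$ come out correctly in each case. Once these normalizations are handled, each verification is a one-line comparison with Lemma~\ref{basicl}.
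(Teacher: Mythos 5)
Your proof is correct and is exactly the argument the paper intends: the paper's proof is the one-line "follows immediately from Lemma~2.1," and what you have written is that same direct comparison of the $k_j$-eigenvalues from Lemma~2.1 with $v_j^{\langle\lambda,\alpha_j\rangle}$, carried out explicitly with the correct normalizations of $d_j$ and $v$ in each type.
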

 \begin{proof} The result follows from  Lemma~\ref{basicl}, immediately.
 \end{proof}
 In the remaining part of this paper, we always assume that
 \begin{equation} \label{rho} \rho=\begin{cases} (n-\frac1 2, n-\frac 3 2, \cdots, \frac 1 2, 0, -\frac 1 2, \cdots, \frac 3 2-n,  \frac 1 2-n), &\text{ $\mathfrak g=\mathfrak{so}_{2n+1} $,}\\
(n-1, n-2, \cdots, 1,  0, 0, -1\cdots, 2-n,  1-n), &\text{ $\mathfrak g=\mathfrak{so}_{2n} $,}\\
 (n, n-1, \cdots, 1,  -1, \cdots, 1-n,  -n), &\text{ $\mathfrak g=\mathfrak{sp}_{2n} $.}\\
 \end{cases}\end{equation}

\begin{coro}\label{1dim} Let  $V=\bigoplus_{i=1}^N \kappa v_i$ be  the {natural representation} of $\mathbf U_\kappa(\mathfrak g)$.
The $1$-dimensional $\kappa$-subspace of $V^{\otimes 2}$ generated by  $\alpha= \sum_{k=1}^{N}q^{\rho_{{k'}}}\varepsilon_{{k'}}v_{k}\otimes v_{{k'}} $  is a left $\mathbf U_\kappa(\mathfrak g)$-module
 where $\rho$  is   defined in \eqref{rho}  and $\varepsilon_i=1$ unless $\mathfrak g=\mathfrak {sp}_{2n}$ and  $n+1\le i\le 2n$. In the later case,  $ \varepsilon_i= -1$.
\end{coro}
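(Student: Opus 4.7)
Since $\kappa\alpha$ is one-dimensional, showing it is a $\mathbf U_\kappa(\mathfrak g)$-submodule is equivalent to showing that each generator maps $\alpha$ into $\kappa\alpha$. The plan is to establish that $\alpha$ is a highest-weight vector of weight zero: once $\wtt(\alpha)=0$, the $k_i^{\pm 1}$ act trivially, and each $e_i\alpha$ (resp.\ $f_i\alpha$) lies in the weight space of weight $\alpha_i$ (resp.\ $-\alpha_i$), so must vanish in order to lie in the weight-zero line $\kappa\alpha$. Stability under the divided powers $e_i^{(k)},f_i^{(k)}$ then follows by first working over $\mathbb Q(v)$, where $e_i^{(k)}=e_i^k/[k]^!_{d_i}$ kills $\alpha$ as soon as $e_i\alpha=0$, and descending to $\kappa$ through the $\mathbf U$-stable $\mathcal A$-lattice $\bigoplus_{i,j}\mathcal A\,v_i\otimes v_j$.

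The weight-zero claim is immediate from the previous corollary, since $\wtt(v_k)+\wtt(v_{k'})=0$ for every index $k$ (including $k=n+1$ in type $B$, where $\wtt(v_{n+1})=0$). For $i\ne n$, I would expand $e_i\alpha$ using $\Delta(e_i)=e_i\otimes 1+k_i\otimes e_i$; by Lemma~\ref{basicl} the only surviving summands are indexed by $k\in\{i+1,i',(i+1)',i\}$, and the resulting four terms are supported on the two basis vectors $v_i\otimes v_{(i+1)'}$ and $v_{(i+1)'}\otimes v_i$. Their pairwise cancellation reduces to the numerical identities
\[q^{\rho_{(i+1)'}}\varepsilon_{(i+1)'}=q^{\rho_{i'}+1}\varepsilon_{i'},\qquad q^{\rho_i}\varepsilon_i=q^{\rho_{i+1}+1}\varepsilon_{i+1},\]
which are verified directly from the explicit values of $\rho$ in \eqref{rho}; the choice $\varepsilon_j=-1$ for $j>n$ in type $C$ is precisely what absorbs the sign appearing in $e_iv_{i'}=-v_{(i+1)'}$ prescribed in Lemma~\ref{basicl}(3a).

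The main obstacle is the exceptional index $i=n$, which must be treated separately for each of the three types. For $\mathfrak g=\mathfrak{so}_{2n+1}$ the middle vector $v_{n+1}$, together with the half-integer entries $\rho_n=\tfrac12$, $\rho_{n+1}=0$, $\rho_{n'}=-\tfrac12$, combines with the factors $\mp q^{\mp 1/2}$ appearing in $e_n$ to force the contributions from $k\in\{n,n+1,n'\}$ to cancel in pairs on the basis vectors $v_n\otimes v_{n+1}$ and $v_{n+1}\otimes v_n$; this is a direct, if delicate, arithmetic check. For $\mathfrak{sp}_{2n}$ at $i=n$ the shift $\rho_n-\rho_{n'}=2$ absorbs the quadratic eigenvalue of $k_n$ on $v_n$, while for $\mathfrak{so}_{2n}$ the branching $\alpha_n=\epsilon_{n-1}+\epsilon_n$ couples the indices $(n-1,(n-1)')$ with $(n,n')$, so that four contributions must cancel simultaneously on $v_{n-1}\otimes v_n$ and $v_n\otimes v_{n-1}$; this uses $\rho_n=0$ and $\rho_{(n-1)'}=-1$. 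The identity $f_i\alpha=0$ in every case is proved by a completely parallel computation from $\Delta(f_i)=f_i\otimes k_i^{-1}+1\otimes f_i$, or alternatively may be deduced from $e_i\alpha=0$ by applying the antipode $S$.
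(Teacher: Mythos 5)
Your proposal is correct and follows essentially the same route as the paper: one checks directly that $k_i\alpha=\alpha$ and that $e_i\alpha$ (supported on $v_i\otimes v_{(i+1)'}$ and $v_{(i+1)'}\otimes v_i$ for $i\ne n$) vanishes via the same coefficient identities $q^{\rho_{(i+1)'}}\varepsilon_{(i+1)'}=q^{\rho_{i'}+1}\varepsilon_{i'}$ and $q^{\rho_i}\varepsilon_i=q^{\rho_{i+1}+1}\varepsilon_{i+1}$, with the same type-by-type treatment of $i=n$ and a parallel check for $f_i$. Your extra remark on the divided powers $e_i^{(k)},f_i^{(k)}$ via the $\mathcal A$-lattice is a point the paper leaves implicit, and is a sound addition.
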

\begin{proof} Obviously, $k_i \alpha=\alpha$,  $1\le i\le n$. Suppose $1\le i\le n-1$.  By Lemma 2.1,
$$\begin{aligned}e_i\alpha=&q^{\rho_{(i+1)'}}\varepsilon_{(i+1)'}e_iv_{i+1}\otimes v_{(i+1)'}+q^{\rho_i}\varepsilon_ie_iv_{{i'}}\otimes v_i\\ &
 +q^{\rho_{i'}}\varepsilon_{i'}k_iv_i\otimes e_iv_{i'}+q^{\rho_{i+1}}\varepsilon_{i+1}k_iv_{(i+1)'}\otimes e_iv_{i+1}\\
=& (q^{\rho_{(i+1)'}}\varepsilon_{(i+1)'}-q^{\rho_{i'}+1}\varepsilon_{i'})v_{i}\otimes v_{(i+1)'}+(q^{\rho_{i+1}+1}\varepsilon_{i+1}-q^{\rho_i}\varepsilon_i)v_{(i+1)'}\otimes v_i,
\end{aligned}$$
Since $q^{\rho_{(i+1)'}}\varepsilon_{(i+1)'}-q^{\rho_{i'}+1}\varepsilon_{i'}= q^{\rho_{i+1}+1}\varepsilon_{i+1}-q^{\rho_i}\varepsilon_i=0$, $ 1\le i\le n-1$,   $e_i \alpha=0$.

If  $\mathfrak g=\mathfrak{sp}_{2n}$, by Lemma~2.1,
$$e_n\alpha=q^{\rho_n}\varepsilon_n v_{n'}\otimes v_n+q^{\rho_{n'}}\varepsilon_{n'}k_n v_n\otimes e_n v_{n'} =qv_n\otimes v_n-qv_n\otimes v_n=0.$$

If  $\mathfrak g=\mathfrak{so}_{2n}$,  by Lemma~2.1,
$$\begin{aligned}e_n\alpha=&q^{\rho_n} e_nv_{n'}\otimes v_n+q^{\rho_{n-1}}e_n v_{(n-1)'}\otimes  v_{n-1}+q^{\rho_{n'}}k_nv_n\otimes e_nv_{n'}
\\ &  +q^{\rho_{(n-1)'}}k_nv_{n-1}\otimes e_nv_{(n-1)'}\\
 =&(q^{\rho_n}-q^{\rho_{(n-1)'}+1})v_{n-1}\otimes v_n+(q^{\rho_{n'}+1}-q^{\rho_{n-1}})v_n\otimes v_{n-1}=0.\end{aligned}$$

 If $\mathfrak g=\mathfrak{so}_{2n+1}$,  by Lemma~2.1,
$$\begin{aligned}e_n\alpha=&q^{\rho_{n+1}} e_nv_{n+1}\otimes v_{n+1}+q^{\rho_{n}}e_n v_{n'}\otimes  v_{n} +q^{\rho_{n+1}}k_nv_{n+1}\otimes e_nv_{n+1}
\\ &  +q^{\rho_{n'}}k_nv_{n}\otimes e_nv_{n'}\\
 =&(q^{\rho_{n+1}}-q^{\rho_{n'}+\frac 1 2})v_{n}\otimes v_{n+1}+(q^{\rho_{n+1}}-q^{\rho_{n}-\frac 1 2})v_n\otimes v_{n-1}=0.\end{aligned}$$
In any case, we have   $e_i \alpha=0$, $1\le i\le n$. Finally, one can check $f_i \alpha=0$, $1\le i\le n$.
\end{proof}

Let $E_{ij}$'s  be the matrix units.
 Consider  the operator
  \begin{equation}\label{rmat}\begin{aligned}  \breve{R}= & \sum_{i\neq  i'} (qE_{ii}\otimes  E_{ii}+  q^{-1} E_{i i'}\otimes E_{ i'i})  +\sum_{i\neq j, j'} E_{ij}\otimes E_{ji}\\
 &+(q-q^{-1})\sum_{i>j} (E_{jj}\otimes E_{ii}-q^{\rho_i-\rho_j}\varepsilon_i\varepsilon_j E_{j i'}\otimes E_{ j' i})+X,\\ \end{aligned} \end{equation}
where $\varepsilon_i$'s  (resp., $\rho$) are defined in Corollary~\ref{1dim} (resp., \eqref{rho}),  and $X$ is $E_{n+1,n+1}^{\otimes 2}$ if $\mathfrak g=\mathfrak {so}_{2n+1}$ and $0$,  otherwise.
 As in \eqref{tot1}--\eqref{tot2},  we go on   identifying $\{1, 2, \cdots,  2',  1'\}$ with $\{1, 2, \cdots, N\}$. Let
    $\delta=q-q^{-1}$.

\begin{lemma}\label{basic2} Let  $V=\bigoplus_{i=1}^N \kappa v_i$ be the {natural representation} of $\mathbf U_\kappa(\mathfrak g)$.
\begin{enumerate}\item  If either  $\mathfrak g\neq \mathfrak {so}_{2n+1}$ or $\mathfrak g=\mathfrak{so}_{2n+1}$ and $(k, \ell)\neq (n+1, n+1)$, then
{\small\begin{equation} \label{form}  (v_k\otimes v_{\ell} )\breve{R} =\begin{cases} q v_{k}\otimes v_{k}, &\text{if $k=\ell$,}\\
v_\ell\otimes v_k, &\text{if $k>\ell, k\neq \ell'$,}\\
q^{-1} v_\ell\otimes v_k-\delta \sum\limits_{i>k} q^{\rho_i-\rho_k} \epsilon_i\epsilon_k v_{ i'}\otimes v_i, &\text{if $k>\ell$, $k=  \ell'$,}\\
v_{\ell}\otimes v_k+\delta v_k\otimes v_{\ell}, &\text{if $k<\ell$, $k\neq \ell'$,}\\
q^{-1} v_{\ell}\otimes v_k+\delta (v_k\otimes v_{\ell}-\sum\limits_{i>k} q^{\rho_i-\rho_k}\epsilon_i\epsilon_k v_{ i'}\otimes v_i), &\text{if $k<\ell$, $k=  \ell'$.}\\
\end{cases}\end{equation}
}
\item  If $\mathfrak g=\mathfrak{so}_{2n+1}$, then $  (v_{n+1} \otimes v_{n+1} )\breve{R}  = v_{n+1}\otimes v_{n+1} -\delta \sum\limits_{i>n+1} q^{\rho_i} v_{ i'}\otimes v_i$, \item
$ \breve{R}-\breve{R}^{-1}=\delta (1-E)$,  where $E: V^{\otimes 2} \rightarrow V^{\otimes 2}$ such that
\begin{equation}\label{ei} (v_k\otimes v_{\ell})E =\begin{cases}\sum_{i=1}^{N} q^{\rho_{i'}-\rho_k}\varepsilon_{ i'}\varepsilon_k v_i\otimes v_{ i'}, &\text{if $k= \ell'$, }\\ 0, &\text{otherwise.}\\

\end{cases}
\end{equation}\end{enumerate}
\end{lemma}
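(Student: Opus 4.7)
The plan is to verify (1)--(3) by direct computation from the definition \eqref{rmat} of $\breve R$, evaluating each summand on a basis vector $v_k \otimes v_\ell$, using the right-action convention $v_k E_{ab} = \delta_{k a} v_b$.

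For parts (1) and (2) I would process \eqref{rmat} summand by summand. The term $\sum_{i \neq i'}(qE_{ii}\otimes E_{ii} + q^{-1}E_{ii'}\otimes E_{i'i})$ yields $qv_k\otimes v_k$ when $k = \ell$ with $k \neq k'$, and $q^{-1}v_\ell\otimes v_k$ when $\ell = k'$; the swap $\sum_{i \neq j, j'}E_{ij}\otimes E_{ji}$ yields $v_\ell\otimes v_k$ exactly when $k \neq \ell$ and $k \neq \ell'$; the diagonal piece $(q - q^{-1})\sum_{i > j}E_{jj}\otimes E_{ii}$ yields $\delta\, v_k \otimes v_\ell$ when $k < \ell$; and the rank-one correction $-(q - q^{-1})\sum_{i > j}q^{\rho_i - \rho_j}\varepsilon_i\varepsilon_j E_{ji'}\otimes E_{j'i}$ yields $-\delta \sum_{i > k}q^{\rho_i - \rho_k}\varepsilon_i\varepsilon_k v_{i'}\otimes v_i$ whenever $k = \ell'$ (since then $j = k$, $j' = \ell$). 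Assembling these contributions case by case on $k$ versus $\ell$ and $\ell'$ produces the five cases of \eqref{form}. Part (2) is then obtained by adding the contribution of $X = E_{n+1,n+1}^{\otimes 2}$ (which yields $v_{n+1}\otimes v_{n+1}$) to the correction piece at $k = \ell = n+1$, using $(n+1)' = n+1$ and $\varepsilon_i = 1$ for $\mathfrak{g} = \mathfrak{so}_{2n+1}$.

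For (3), I would exploit the weight decomposition of $V^{\otimes 2}$, under which (1)--(2) show that $\breve R$ splits into three types of invariant blocks. On the one-dimensional blocks $\kappa(v_k\otimes v_k)$ with $k \neq k'$, $\breve R$ acts by $q$, so $\breve R - \breve R^{-1} = \delta$, matching $\delta(1-E)$ since $E$ vanishes. On the two-dimensional blocks $\kappa(v_k\otimes v_\ell) \oplus \kappa(v_\ell\otimes v_k)$ with $k < \ell$, $k \neq \ell'$, the matrix of $\breve R$ is $\bigl(\begin{smallmatrix}\delta & 1 \\ 1 & 0\end{smallmatrix}\bigr)$, whose inverse differs from itself by $\delta\cdot I$, and again $E = 0$. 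The essential block is $W$, spanned by $\{v_k \otimes v_{k'}\}_k$ (together with $v_{n+1}\otimes v_{n+1}$ in the odd orthogonal case). By \eqref{ei}, the restriction $E|_W$ is a rank-one operator whose image is spanned by the invariant vector $\alpha$ of Corollary~\ref{1dim} and satisfies $E^2 = c\, E$ for $c = \sum_i q^{\rho_{i'}-\rho_i}\varepsilon_{i'}\varepsilon_i$. A direct computation, combining the diagonal output $q^{-1}v_{k'}\otimes v_k$ with the rank-one correction from (1)--(2), then verifies $\breve R\bigl(\breve R - \delta(1 - E)\bigr) = 1$ on $W$, which is equivalent to $\breve R - \breve R^{-1} = \delta(1-E)$.

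The main obstacle is the bookkeeping on $W$: handling the mix of the scalar $q^{-1}$-term, the rank-one sum $\sum_i q^{\rho_i - \rho_k}\varepsilon_i\varepsilon_k v_{i'}\otimes v_i$, and (for $\mathfrak g = \mathfrak{so}_{2n+1}$) the separate behaviour of $v_{n+1}\otimes v_{n+1}$ coming from $X$. The balancing identities $q^{\rho_{i+1}+1}\varepsilon_{i+1} = q^{\rho_i}\varepsilon_i$ used inside the proof of Corollary~\ref{1dim}, together with $\rho_{i'} = -\rho_i$, are exactly what is needed to collapse the mixed cross terms into the clean rank-one form of $E$ and to yield the required cancellation.
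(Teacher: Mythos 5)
Your computation is correct and is exactly the direct verification the paper has in mind — its entire proof of this lemma is the phrase ``Easy exercise.'' The summand-by-summand evaluation of $\breve R$ for (1)--(2) and the block-by-block check of $\breve R(\breve R-\delta(1-E))=1$ for (3) assemble precisely to the stated formulas.
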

\begin{proof} Easy exercise.\end{proof}

Following \cite{Hu},  we say that $ v_{j_1}\otimes v_{j_2}$  is \textsf{involved} in $(v_{i_1}\otimes v_{i_2})\check{R}$ if it appears in $(v_{i_1}\otimes v_{i_2})\check{R}$ with non-zero coefficient.
For any positive integers $r$ and $N$,  let \begin{equation} \label{inr} I(N, r)=\{(i_1, i_2, \cdots, i_r)\mid 1\le i_j\le N, \forall 1\le j\le r\}.\end{equation}
If $\mathbf i\in I(N, r)$, we write \begin{equation}\label{vi} v_{\mathbf i}=v_{i_1}\otimes v_{i_2}\otimes \cdots\otimes v_{i_r}.\end{equation}

\begin{corollary}\label{fact} Let  $V$ be  the {natural representation} of $\mathbf U_\kappa(\mathfrak g)$.  If $(i_1,i_2),(j_1,j_2)\in I(N,2)$, then
$ v_{j_1}\otimes v_{j_2}\in V^{\otimes 2}$ is involved in  $(v_{i_1}\otimes v_{i_2})\check{R}$  only if $ j_1\leq i_2$ and $ j_2\geq i_1$. \end{corollary}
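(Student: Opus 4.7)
The proof is a direct case-by-case check based on the explicit formulas in Lemma~\ref{basic2}. The plan is to enumerate all the summands that can appear in $(v_{i_1}\otimes v_{i_2})\check R$ and verify, in each case, that their indices $(j_1,j_2)$ satisfy $j_1\le i_2$ and $j_2\ge i_1$.

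Write $(k,\ell)=(i_1,i_2)$. The terms that can appear on the right-hand side of \eqref{form} (and of the $(n+1,n+1)$-formula in part (2) of Lemma~\ref{basic2}) fall into three types:
\begin{enumerate}
\item the \emph{swap} term $v_\ell\otimes v_k$, giving $(j_1,j_2)=(\ell,k)=(i_2,i_1)$;
\item the \emph{diagonal} term $v_k\otimes v_\ell$, giving $(j_1,j_2)=(i_1,i_2)$ (which occurs only when $k\le\ell$);
\item the \emph{contraction} terms $v_{i'}\otimes v_i$ with $i>k$, which occur only when $k=\ell'$ (or in the $(n+1,n+1)$ case).
\end{enumerate}

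For type (1) the conditions $j_1=i_2\le i_2$ and $j_2=i_1\ge i_1$ are trivial. For type (2) we observe that it only appears in the cases $k\le\ell$ of \eqref{form} (and in the $(n+1,n+1)$ case), so $j_1=k\le\ell=i_2$ and $j_2=\ell\ge k=i_1$. For type (3), the contraction terms only occur in the subcases where $k=\ell'$, i.e., $i_2=i_1'$. The required inequalities are $i'\le i_2=k'$ and $i\ge i_1=k$. Both follow from $i>k$, using the fact that the map $j\mapsto j'$ (as arranged in \eqref{tot1}--\eqref{tot2}) is order-reversing on the total order of indices: $i>k$ gives $i'<k'=i_2$ and $i>k=i_1$.

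The main (minor) obstacle is bookkeeping: one must keep track of the special linearized order $1<2<\cdots<n(<n+1)<n'<\cdots<1'$ on $\{1,\dots,N\}$ and the order-reversing nature of the involution $j\mapsto j'$, and in the $\mathfrak{so}_{2n+1}$ case handle the exceptional diagonal index $n+1$ (where $(n+1)'=n+1$) by appealing separately to part (2) of Lemma~\ref{basic2}. Once these are set up, no computation beyond reading off the formulas in \eqref{form} is needed.
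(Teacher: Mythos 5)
Your proof is correct and follows essentially the same route as the paper: the paper cites Hu for the $\mathfrak{sp}_{2n}$ case and states that the remaining cases follow immediately from Lemma~\ref{basic2}, which is exactly the case-by-case reading of \eqref{form} (plus the $(n+1,n+1)$ formula) that you carry out explicitly, with the order-reversing property of $j\mapsto j'$ handling the contraction terms. No issues.
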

\begin{proof} The result was given in \cite{Hu} for $\mathbf U_\kappa(\mathfrak {sp}_{2n})$. The other cases  follow from Lemma~\ref{basic2}, immediately.
\end{proof}

\medskip

\begin{definition}~\cite{BM, Mu}\label{bmw-def}   Let $R$ be a commutative ring containing $1$ and invertible elements $\varrho, q$ and $q-q^{-1}$.
The Birman-Murakami-Wenzl algebra
$\mathscr B_r(\varrho, q)$ is the unital
associative $R$-algebra generated by $T_i, E_i,1\le i\le r-1$ satisfying
relations\begin{enumerate} \item
$(T_i-q)(T_i+q^{-1})(T_i-\varrho^{-1})=0$ for $1\le i\le r-1$,
\item $T_{i}T_{i+1}T_i=T_{i+1}T_iT_{i+1}$,  for $1\le i\le r-2$,
\item $ T_iT_j=T_jT_i$,  for $|i-j|>1$,
\item $E_iT_j^{\pm 1} E_i=\varrho^{\pm 1}E_i$,  for $1\le i\le r-1$ and $j=i\pm 1$,
\item $ E_i T_i=T_iE_i=\varrho^{-1} E_i$,  for $1\le i\le r-1$.
\end{enumerate}
where $T_i-T_{i}^{-1} =(q-q^{-1}) ( 1-E_i)$  for $1\le
i\le r-1$.\end{definition}

The  following  results  follow from Definition~\ref{bmw-def}, immediately.
\begin{lemma}\label{anti} Let  $\mathscr B_r(\varrho, q) $ be  defined over $R$. \begin{enumerate} \item There is an $R$-linear anti-involution $ \sigma$ of $\mathscr B_r(\varrho, q) $ fixing $T_i$ and $E_i$, $1\le i\le r-1$.
\item There is an $R$-linear automorphism $\gamma $ of $\mathscr B_r(\varrho, q)$ such that $\gamma(T_i)=T_{r-i}$ and $\gamma(E_i)=E_{r-i}$, $1\le i\le r-1$.\item
Let $\tilde{\sigma}=\sigma\circ\gamma$. Then  $\tilde{\sigma}$  is an $R$-linear anti-involution of  $\mathscr B_r(\varrho, q)$ such that $\tilde{\sigma}(T_i)=T_{r-i}$ and $\tilde{\sigma}(E_i)=E_{r-i}$, $1\le i\le r-1$.\end{enumerate}
\end{lemma}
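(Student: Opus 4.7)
The plan is to verify each item by checking that the defining relations in Definition~\ref{bmw-def} are compatible with the prescribed map on generators. In each case this reduces to the observation that the BMW relations are simultaneously palindromic in their factors and equivariant under the index involution $i\mapsto r-i$.

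For (1), I define $\sigma$ on the free $R$-algebra on symbols $T_i,E_i$ by $\sigma(T_i)=T_i$, $\sigma(E_i)=E_i$, extended as an $R$-linear anti-homomorphism (so $\sigma(xy)=\sigma(y)\sigma(x)$). I then check that the two-sided ideal generated by the relations (1)--(5) in Definition~\ref{bmw-def} together with $T_i-T_i^{-1}=(q-q^{-1})(1-E_i)$ is preserved by $\sigma$. Relations (1), (3) and the relation $T_i-T_i^{-1}=(q-q^{-1})(1-E_i)$ involve only one generator, so they are automatically invariant. The braid relation (2), $T_iT_{i+1}T_i=T_{i+1}T_iT_{i+1}$, is palindromic on each side, hence is mapped to itself after reversal of order. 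Relations (4) and (5) are visibly symmetric under reversal of order (in (5) it is essential that $T_iE_i=E_iT_i$, which is part of the relation itself). Thus $\sigma$ descends to an $R$-linear anti-homomorphism on $\mathscr B_r(\varrho,q)$, and since it fixes the generators it satisfies $\sigma^2=\id$.

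For (2), I define $\gamma$ as the $R$-linear algebra homomorphism determined by $\gamma(T_i)=T_{r-i}$ and $\gamma(E_i)=E_{r-i}$. The substitution $i\mapsto r-i$ is a bijection of $\{1,\dots,r-1\}$ preserving the adjacency conditions $j=i\pm 1$ and $|i-j|>1$, and preserves the index $i$ in the single-generator relations. Hence each defining relation is mapped to another instance of the same relation. Applying $\gamma$ twice returns each generator to itself, so $\gamma^2=\id$ and $\gamma$ is an automorphism.

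For (3), I set $\tilde\sigma=\sigma\circ\gamma$. A composition of an anti-homomorphism and a homomorphism is an anti-homomorphism, so $\tilde\sigma$ is an $R$-linear anti-homomorphism of $\mathscr B_r(\varrho,q)$, and on generators it computes to $\tilde\sigma(T_i)=\sigma(T_{r-i})=T_{r-i}$ and similarly $\tilde\sigma(E_i)=E_{r-i}$. Finally, since $\sigma$ and $\gamma$ commute on generators (both act trivially after two applications), $\tilde\sigma^2(T_i)=\tilde\sigma(T_{r-i})=T_i$ and $\tilde\sigma^2(E_i)=E_i$, so $\tilde\sigma$ is an involution.

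There is no genuine obstacle; the only point requiring a moment of attention is verifying that relation (5), which mixes $T_iE_i$ and $E_iT_i$, is symmetric under reversal in (1), and that the range restriction $1\le i\le r-1$ in (2) is stable under $i\mapsto r-i$. Both hold by inspection.
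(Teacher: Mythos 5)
Your verification is correct and is precisely the routine check that the paper omits: the paper's ``proof'' is the single sentence that the lemma follows immediately from Definition~\ref{bmw-def}, and your argument (preservation of the defining ideal under the order-reversing map and under the index substitution $i\mapsto r-i$, plus the observation that a composite of an anti-homomorphism with a homomorphism is an anti-homomorphism) is the standard way to make that precise. One small slip worth fixing: relation (3), $T_iT_j=T_jT_i$ for $|i-j|>1$, involves two generators rather than one, but it is manifestly symmetric under reversal of order, so your conclusion is unaffected.
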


In this paper, we need  Enyang's result on a basis of $\mathscr B_r (\varrho, q)$ in \cite{Enyang}.
Let $\mathfrak S_r$ be the symmetric group in $r$ letters $\{1, 2, \cdots, r\}$. Then $\mathfrak S_r$ is a Coxeter group with generators $s_1, s_2, \cdots, s_{r-1}$ satisfying usual braid relations together with
$s_i^2=1$, $1\le i\le r-1$.
For each  integer $f$,  $1\le f\le \lfloor\frac r2 \rfloor$,  let $\mathfrak B_f$
be the subgroup of $\mathfrak S_r$ generated by $s_{1}$,
and $s_{2i-2} s_{2i-1}s_{2i-3}s_{2i-2}$,
$2\le i\le f$.  If $f=0$, we set $\mathfrak B_f=1$.   Enyang~\cite{Enyang}  described  $\mathscr D_{f}$, a  complete set of right coset
representatives of $\mathfrak B_f\times \mathfrak S_{r-2f}  $ in
$\mathfrak S_r$, where   $\mathfrak
S_{r-2f}$ is  the subgroup of $\mathfrak S_r$ generated by $s_j$, $2f+1\le j\le r-1$.
For any $w\in \mathfrak S_r$, write $T_w=T_{i_1}T_{i_2}\cdots
T_{i_k}\in \mathscr B_r(\varrho, q)$ if $s_{i_1}\cdots s_{i_k}$ is a reduced expression of
$w$. It is known that  $T_w$ is independent of a reduced
expression of $w$.

\begin{theorem}\cite{Enyang}\label{basis} Suppose that $R$ is a commutative ring containing $1$ and invertible
elements $\varrho, q$ and $q-q^{-1}$.  Then   $S_1
=\{T^*_{d_1}E^f T_w T_{d_2}\mid 0\leq f\leq  \lfloor r/2\rfloor, w\in\mathfrak{S}_{r-2f}, d_1,d_2\in\mathscr D_{ f} \}$ is an $R$-basis of  $ \mathscr B_r(\varrho,q)$,
where $E^f=E_{1}E_{3} \cdots E_{2f-1} $ for $f>0$  and $E^0=1$, and ``$\ast$'' is the $R$-linear anti-involution $\sigma$ on $\mathscr  B_{r}(\varrho, q)$
given in Lemma~\ref{anti}(1).
\end{theorem}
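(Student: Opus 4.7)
The plan is to establish this basis theorem by combining a spanning argument with a cardinality count, then invoking a specialization to generic parameters for linear independence, following Enyang's strategy in \cite{Enyang}.

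First I would prove that $S_1$ spans $\mathscr B_r(\varrho,q)$ as an $R$-module by induction on $r$. Any monomial in the generators $T_i^{\pm 1}, E_i$ must be reduced to the $R$-span of $S_1$. The inductive step relies on a straightening procedure: using the defining BMW relations, in particular $E_i T_j^{\pm 1} E_i = \varrho^{\pm 1} E_i$ for $j = i \pm 1$, $E_i T_i = T_i E_i = \varrho^{-1} E_i$, the braid relations, and the far-commutation relations $T_i T_j = T_j T_i$ for $|i-j|>1$, every word in the generators can be rewritten so that the $E$-factors are collected into the standard central block $E^f = E_1 E_3 \cdots E_{2f-1}$, with a Hecke-type factor $T_w$ for some $w \in \mathfrak S_{r-2f}$ in between and coset representatives $T^*_{d_1}, T_{d_2}$ on the outside. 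The choice of $\mathscr D_f$ as right coset representatives of $\mathfrak B_f \times \mathfrak S_{r-2f}$ in $\mathfrak S_r$ is precisely what makes this normal form unambiguous: $\mathfrak B_f$ encodes the internal symmetries of the pairing pattern $\{1,2\},\{3,4\},\ldots,\{2f-1,2f\}$ underlying $E^f$, and $\mathfrak S_{r-2f}$ acts on the remaining strands.

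Next I would verify $|S_1| = (2r-1)!!$. Since $\mathfrak B_f$ is the hyperoctahedral group of order $2^f f!$, one has $|\mathscr D_f| = r!/(2^f f! (r-2f)!) = \binom{r}{2f}(2f-1)!!$, and hence
\begin{equation*}
|S_1| = \sum_{f=0}^{\lfloor r/2\rfloor} |\mathscr D_f|^2 \cdot |\mathfrak S_{r-2f}| = \sum_{f=0}^{\lfloor r/2\rfloor} \binom{r}{2f}^2 (2f-1)!!^2 (r-2f)!,
\end{equation*}
which is the familiar count of Brauer diagrams on $r$ strands, equal to $(2r-1)!!$.

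Finally, for linear independence I would specialize to the generic ring $\mathbb Z[\varrho^{\pm 1}, q^{\pm 1}, (q-q^{-1})^{-1}]$, where \cite{BM, Mu} show that $\mathscr B_r(\varrho,q)$ is free of rank $(2r-1)!!$; a spanning set of this cardinality must then be a basis, and base change to arbitrary admissible $R$ preserves both the spanning property and the rank. The main obstacle is the spanning step: the delicate combinatorial bookkeeping required to certify that every monomial reduces \emph{uniquely} to the prescribed form $T^*_{d_1} E^f T_w T_{d_2}$. One must argue that the middle Hecke word $T_w$ stays inside $\mathfrak S_{r-2f}$ and does not leak into the $\mathfrak B_f$ direction, and that the outer parts land on the chosen representatives in $\mathscr D_f$ rather than on other representatives of the same coset. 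Enyang's explicit description of $\mathscr D_f$ makes this tractable, but it constitutes the technical heart of the proof.
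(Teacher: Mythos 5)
Your outline is correct and is essentially the argument of the cited source (the paper itself offers no proof of this theorem, only the reference to Enyang): spanning by a straightening induction on words in the generators, the Brauer-diagram count $\sum_f |\mathscr D_f|^2(r-2f)!=(2r-1)!!$, and linear independence by comparison with the known generic rank followed by base change. Two minor points: uniqueness of the reduction is not needed for the spanning step (only existence of some reduction to the normal form), and the freeness of rank $(2r-1)!!$ over the integral generic ring $\mathbb Z[\varrho^{\pm1},q^{\pm1},(q-q^{-1})^{-1}]$ is due to Morton and Wassermann (via the Kauffman tangle algebra) rather than to \cite{BM, Mu}, which establish the dimension only for generic parameters over $\mathbb C$.
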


Let $\mathscr D^f$ be the set of
distinguished right coset representatives of $\mathfrak B_f$ in the subgroup $\mathfrak S_{2f}$ of $\mathfrak S_r$ generated by $s_i, 1\le i\le 2f-1$.
It was defined in  \cite{DDH} that
\begin{equation} \label{pf} P_f=\{(i_1,i_2,\cdots, i_{2f})\mid 1\leq i_1<\cdots<i_{2f}\leq r \}.\end{equation}
For each $J\in P_f$, define \begin{equation}\label{dj} d_J=s_{2f,i_{2f}}s_{2f-1,i_{2f-1}}\cdots s_{2,i_2}s_{1,i_1},\end{equation}
where $s_{i, j}=s_{i}s_{i+1, j}$ (resp., $1$) for  $i<j$ (resp., $i=j$) and   $ s_{i, j}=s_{j, i}^{-1}$ if $i>j$.
Then $d_J$ is the  unique element in $\mathscr D_f$
such that $(k)d_J=i_k$, $1\le k\le 2f$.
Further, by  \cite[Lemma~3.8]{DDH},
\begin{equation} \mathscr D_f =\dot \bigcup_{J\in P_f} \mathscr D^f d_J,\end{equation}
where $\dot\cup$ denotes a disjoint union. Following \cite{Hu}, define   $J_0=(r-2f+1,\cdots,r-1,r)\in P_f$ and
  $d_0=s_{2f-2,2f}s_{2f-4,2f}\cdots s_{2,2f}\in
  \mathscr  D^f$.

\begin{lemma}\cite[Lemma~5.12]{Hu}\label{dd}
\begin{enumerate}
\item  For any $d\in \mathscr D_f$, there is a $w\in \mathfrak S_r$, such that $d_0=dw$ and $\ell(d_0)=\ell(d)+\ell(w)$, where $\ell(\ )$ is the length function on $\mathfrak S_r$.
\item For any  $J\in P_f$, there is a $w'\in\mathfrak S_r$, such that $d_{J_0}=d_Jw'$ and $\ell(d_{J_0})=\ell(d_J)+\ell(w')$.
\item For any $d\in \mathscr D_f$ with $d\neq d_0d_{J_0}$, there is a $j$ with $1\leq j<r$, such that $ds_j\in\mathscr D_f$ and $\ell(ds_j)=\ell(d)+1$.
\end{enumerate}
\end{lemma}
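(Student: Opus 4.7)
The strategy is to read (1) and (2) as the statements that $d_0$ and $d_{J_0}$ are the longest elements in particular sets of minimum-length coset representatives, and to deduce (3) from (1) and (2) using the disjoint decomposition $\mathscr D_f=\bigsqcup_{J\in P_f}\mathscr D^f d_J$ recalled just before the lemma.

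For (1), I would first give $\mathscr D^f$ a combinatorial description as
\[
\{d\in\mathfrak S_{2f}\;:\;d(2i-1)<d(2i)\ \text{for all } i,\ \ d(1)<d(3)<\cdots<d(2f-1)\},
\]
placing it in bijection with the $(2f-1)!!$ perfect matchings of $\{1,\ldots,2f\}$ via $d\leftrightarrow\{\{d(2i-1),d(2i)\}\}_i$. Unpacking the product $d_0=s_{2f-2,2f}s_{2f-4,2f}\cdots s_{2,2f}$ shows that it realizes the fully nested matching $\{(k,\,2f+1-k)\}_{k=1}^{f}$ and has length $f(f-1)$, which one checks is the maximum of the length statistic over $\mathscr D^f$. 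I would then verify that $\mathscr D^f$ is the weak-right-order interval $[1,d_0]$ by downward induction from $d_0$: whenever $d\in\mathscr D^f$ differs from $d_0$, some pair $(d(2i-1),d(2i))$ fails to be maximally nested, which produces an explicit $s_k$ with $ds_k\in\mathscr D^f$ and $\ell(ds_k)=\ell(d)+1$. For (2), I would note that $\{d_J:J\in P_f\}$ is precisely the standard set of minimum-length right coset representatives for the parabolic $\mathfrak S_{2f}\times\mathfrak S_{r-2f}$ in $\mathfrak S_r$, characterized by $d(1)<\cdots<d(2f)$ and $d(2f+1)<\cdots<d(r)$; standard Coxeter theory then identifies this set with the weak-right interval $[1,d_{J_0}]$, yielding (2).

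For (3), I would factor $d\in\mathscr D_f$ uniquely as $d=d'd_J$ with $d'\in\mathscr D^f$ and $J\in P_f$ and treat two cases. If $J\neq J_0$, apply (2) to find $s_j$ (with $j\ge 2f-1$) such that $d_Js_j=d_{J'}$ with $\ell(d_{J'})=\ell(d_J)+1$; then $ds_j=d'd_{J'}\in\mathscr D^f d_{J'}\subset\mathscr D_f$ with length $\ell(d)+1$. If $J=J_0$ but $d'\neq d_0$, apply (1) to obtain $s_k$ with $1\le k<2f$ and $d's_k\in\mathscr D^f$; the key point is that $d_{J_0}$ sends $1,\ldots,2f$ bijectively to the consecutive block $r-2f+1,\ldots,r$, so the conjugate $d_{J_0}^{-1}s_k d_{J_0}$ is the simple transposition $s_{r-2f+k}\in\mathfrak S_r$, and hence $d\cdot s_{r-2f+k}=(d's_k)d_{J_0}\in\mathscr D^f d_{J_0}\subset\mathscr D_f$ with length increased by one. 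I expect the main obstacle to be (1), because $\mathfrak B_f$ is not a standard parabolic of $\mathfrak S_{2f}$, so the off-the-shelf ``minimum-coset-reps form a weak-order interval'' fact is not available and must be supplied by the explicit matching-theoretic argument above; once (1) and (2) are in hand, the crucial rigidity in (3), namely that conjugation by $d_{J_0}$ sends simple transpositions to simple transpositions, follows cleanly from the consecutive-block structure of $J_0$.
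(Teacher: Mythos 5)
The paper offers no proof of this lemma at all: it is quoted from \cite[Lemma~5.12]{Hu}, so the only thing to compare your argument with is Hu's original, and your reconstruction is essentially that argument (factor $\mathscr D_f=\dot\cup_J\mathscr D^f d_J$, prove (1) and (2) as ``weak-order interval'' statements for $\mathscr D^f$ and for the parabolic quotient $\{d_J\}$ respectively, and deduce (3)). Two points are worth confirming explicitly. First, your silent replacement of $\mathscr D_f$ by $\mathscr D^f$ in part (1) is not just permissible but necessary: as literally stated the claim is false (e.g.\ $d_{J_0}\in\mathscr D_f$ has length $2f(r-2f)$, which exceeds $\ell(d_0)=f(f-1)$ as soon as $r>2f$, and $d_0=d_{J_0}w$ with additive lengths would force $d_{J_0}\in\mathfrak S_{2f}$); this is a transcription slip in the paper, and (1) is only ever used through the $\mathscr D^f$-component of $d=d'd_J$. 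Second, your mechanism for (3) is exactly right: additivity $\ell(d'd_J)=\ell(d')+\ell(d_J)$ (because $d_J$ is a distinguished right coset representative of the parabolic $\mathfrak S_{2f}\times\mathfrak S_{r-2f}$) handles the case $J\neq J_0$, and the conjugation identity $d_{J_0}^{-1}s_kd_{J_0}=s_{r-2f+k}$, valid because $d_{J_0}$ maps $\{1,\dots,2f\}$ order-preservingly onto the consecutive block $\{r-2f+1,\dots,r\}$, handles the case $J=J_0$, $d'\neq d_0$.

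The one place where real work remains is the ascent claim in (1), which you correctly flag as the obstacle. The claim is true, but your one-line justification (``some pair $(d(2i-1),d(2i))$ fails to be maximally nested'') does not quite describe the right move: the useful $s_k$ is indexed by a pair of \emph{adjacent values} $k,k+1$ whose positions in $d$ are increasing, lie in different blocks, and are not both odd (i.e.\ not two consecutive openers); right multiplication by $s_k$ then stays in $\mathscr D^f$ and raises the length by one. To see that only $d_0$ admits no such $k$, one checks that the three failure modes (descent of values, $k$ and $k+1$ matched to each other, $k$ and $k+1$ both openers) force $1,\dots,f$ to sit at positions $1,3,\dots,2f-1$ and then $f+1,\dots,2f$ to sit at the even positions in decreasing order, which is precisely $d_0$. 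This is a finite, routine case analysis, so I would not call it a gap, but it does need to be written out. A last trivial remark: the parenthetical ``$j\ge 2f-1$'' in your case $J\neq J_0$ is false in general (for $r=5$, $f=2$, $J=(1,2,4,5)$ one gets $j=2$), but it plays no role in the argument.
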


In the remaining part of this section, we always assume that
\begin{equation}\label{varrho}
\varrho=
 \begin{cases}-q^{2n+1}, &\quad \text{if $ \mathfrak g=\mathfrak{sp}_{2n}$,}\\
 q^{2n-1}, &\quad \text{if  $\mathfrak g=\mathfrak{so}_{2n}$,}\\
  q^{2n}, &\quad \text{if  $\mathfrak g=\mathfrak{so}_{2n+1}$. }\\
 \end{cases}
\end{equation}
Let  $V$ be  the natural representation of  $\mathbf U_\kappa(\mathfrak g)$  with $\mathfrak g\in \{
\mathfrak{sp}_{2n}, \mathfrak {so}_{2n}, \mathfrak {so}_{2n+1}\}$.  If $\varrho$ is given in \eqref{varrho}, then there is a $\kappa$-algebra homomorphism
\begin{equation} \label{isoo1} \varphi: \mathscr B_r(\varrho,  q )\rightarrow  \text{End}_{\mathbf U_\kappa(\mathfrak g)}(V^{\otimes r})\end{equation} such that
\begin{equation} \varphi(T_i)=1^{\otimes i-1}\otimes \breve{R}\otimes 1\otimes \cdots \otimes 1 \text{ and }  \varphi(E_i)=1^{\otimes i-1} \otimes E\otimes 1\otimes \cdots \otimes 1.\end{equation} We remark that $\varphi$ has been defined in \cite{ha} when $\kappa$ is $\mathbb C(v)$. However, since $V$ contains an $\mathcal A$-lattice which is a left  $\mathbf U(\mathfrak g)$-module, by base change, it can be defined over an arbitrary field $\kappa$.

In the remaining part of this section, all results  for $\mathbf U_\kappa(\mathfrak {sp}_{2n})$ have been proved in \cite{Hu}. The corresponding results for both $\mathbf U_\kappa(\mathfrak{so}_{2n})$ and $\mathbf U_\kappa(\mathfrak{so}_{2n+1})$ can also be proved by arguments in
 \cite{Hu}. For self-contained reason, we give a sketch.

\begin{lemma}\label{ker} (cf.~\cite[Lemma~5.6]{Hu})  Suppose $n\ge r$. Then $\text{ker }\varphi\subseteq \mathscr B_r(\varrho,q)^1$,
where $\mathscr B_r(\varrho,q)^f$ is the two-sided ideal of $\mathscr B_r(\varrho,q)$ generated by $E^f$, $1\leq f\leq [r/2]$.
\end{lemma}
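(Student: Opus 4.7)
The plan is to use Enyang's basis $S_1$ from Theorem~\ref{basis} to write any $x \in \ker\varphi$ as $x = x_0 + x_+$, where $x_0 = \sum_{w \in \mathfrak S_r} c_w T_w$ collects the $f=0$ basis elements (note that $\mathscr D_0=\{1\}$) and $x_+$ collects those with $f \geq 1$, so that automatically $x_+ \in \mathscr B_r(\varrho,q)^1$. It then suffices to show $x_0 = 0$. The strategy is to evaluate $\varphi(x)$ at the test vector $v_\mathbf{I} := v_1 \otimes v_2 \otimes \cdots \otimes v_r \in V^{\otimes r}$, which makes sense because $n \geq r$.

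First I would show that $\varphi(x_+)(v_\mathbf{I}) = 0$. Since $n \geq r$, each index $i \in \{1,\ldots,r\}$ satisfies $i \leq n$ while its dual $i'$ lies in $\{n+1, \ldots, N\}$ (or equals $n+1$ in the $\mathfrak{so}_{2n+1}$ case), so no two elements of $\{1, \ldots, r\}$ form a dual pair. Consequently, for every $\check R$-factor that arises during the action of $T_w T_{d_2}$ on $v_\mathbf{I}$, only the cases $k = \ell$, $k > \ell$ with $k \neq \ell'$, or $k < \ell$ with $k \neq \ell'$ of formula~\eqref{form} are triggered, none of which introduce indices outside $\{1, \ldots, r\}$. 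Combined with Corollary~\ref{fact}, this shows by induction on the length that $\varphi(T_w T_{d_2})(v_\mathbf{I})$ is a linear combination of vectors $v_\mathbf{j}$ with $\mathbf{j}$ a permutation of $(1, \ldots, r)$. Applying $E_1$ then annihilates each such $v_\mathbf{j}$ by~\eqref{ei}, since $j_1 \neq j_2'$ for distinct $j_1, j_2 \in \{1,\ldots,r\}$. Hence $E^f$ kills the whole expression whenever $f \geq 1$, so the left action of $T^*_{d_1}$ produces zero and $\varphi(x_+)(v_\mathbf{I}) = 0$.

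Thus $\varphi(x_0)(v_\mathbf{I}) = \varphi(x)(v_\mathbf{I}) = 0$, and to deduce $x_0 = 0$ I would verify that $\{\varphi(T_w)(v_\mathbf{I}) : w \in \mathfrak S_r\}$ is a linearly independent set. By the same analysis, each elementary $\check R$ swap acts on the relevant positions exactly as the standard type~$A$ $R$-matrix on distinct indices from $\{1, \ldots, r\}$, namely as a swap plus a $\delta$-correction that preserves the underlying ordered tuple. A triangularity argument with respect to Bruhat order on $\mathfrak S_r$ then shows that $\varphi(T_w)(v_\mathbf{I}) = v_{w^{-1}\cdot \mathbf{I}} + (\text{sum over } v_{u^{-1}\cdot \mathbf I} \text{ with } u < w)$, where the leading monomials $v_{w^{-1} \cdot \mathbf{I}}$ are pairwise distinct as $w$ ranges over $\mathfrak S_r$. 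The triangular system forces $c_w = 0$ for all $w$, so $x_0 = 0$ and $x = x_+ \in \mathscr B_r(\varrho, q)^1$.

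The main obstacle is the uniform treatment of the three Lie types $\mathfrak{sp}_{2n}$, $\mathfrak{so}_{2n}$, $\mathfrak{so}_{2n+1}$, whose $\check R$-matrices differ in the type-dependent correction terms (notably the extra $E_{n+1,n+1}^{\otimes 2}$ contribution in the odd orthogonal case). This is handled by observing that every delicate correction in both~\eqref{form} and~\eqref{ei} is supported on $k = \ell'$, whereas the vector $v_\mathbf{I}$ and its entire orbit under the $f=0$ part of $\mathscr B_r(\varrho,q)$ live inside the ``dual-pair free'' region $\{1, \ldots, r\} \subseteq \{1, \ldots, n\}$, so all type-dependent complications disappear at once. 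The remaining bookkeeping (tracking reduced expressions for $d_2$ and $w$ to run the induction) is routine.
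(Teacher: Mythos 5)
Your proof is correct and takes essentially the same route as the paper, which simply invokes Hu's Lemma~5.6 with the test vector $v_r\otimes v_{r-1}\otimes\cdots\otimes v_1$ and observes that by \eqref{form} the action on such dual-pair-free vectors is identical in all three types; you have reconstructed that argument in detail (your increasing test vector is the reverse of theirs, which is immaterial). One minor slip: in the right action $v\cdot\bigl(T^*_{d_1}E^fT_wT_{d_2}\bigr)$ the factor $T^*_{d_1}$ is applied first, not last, but since any product of the $T_i$'s preserves the span of tensors indexed by permutations of $(1,\dots,r)$ and $E_1$ then annihilates that span, your conclusion is unaffected.
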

\begin{proof} Recall that $\{v_i\mid 1\le i\le N\}$ is a basis of $V$. Let $v=v_{r}\otimes v_{r-1}\otimes\cdots\otimes v_1\in V^{\otimes r}$.  If $x\in \ker\varphi$,  then $v x=0$. It is proved in \cite[Lemma~5.6]{Hu} that $x\in  \mathscr B_r(\varrho,q)^1$ if  $\mathfrak g=\mathfrak{sp}_{2n}$ and $n\ge r$.
 By \eqref{form},
$\mathscr B_{r}(\varrho, q)$ acts  on $v$ via  the same formula for $\mathfrak{g}\in \{\mathfrak{sp}_{2n}, \mathfrak{so}_{2n}, \mathfrak{so}_{2n+1}\}$.
  So, the results for $\mathfrak{so}_{2n}, \mathfrak{so}_{2n+1}$  follow from similar arguments.
\end{proof}

For $\textbf{i}\in I(N,r)$, let $\ell(v_{\textbf{i}})=\ell(\textbf{i})$, which is  the maximal number of disjoint  pairs $(s,t) $  such that $i_s=(i_t)'$.
When $\mathfrak g=\mathfrak{sp}_{2n}$, $\ell(v_{\textbf{i}})$ is called  the \textsf{ symplectic length} of $\mathbf i$ in \cite{Hu}.
The following result has been given in
\cite[Lemma~5.14]{Hu} for $\mathfrak g=\mathfrak{sp}_{2n}$. In Cases~2--3 of the proof of \cite[Lemma~5.14]{Hu},
 Hu used  \cite[(5.13)]{Hu}   and did not use the explicit
description of  $(v_{i_1}\otimes v_{i_2})\check{R}$. If $\mathfrak g\in\{ \mathfrak{so}_{2n}, \mathfrak {so}_{2n+1}$\},   \cite[(5.13)]{Hu} is still true (see Corollary~\ref{fact}).
So,  arguments in the proof of \cite[Lemma~5.14]{Hu} can be used smoothly to give the proof of the corresponding results for both $\mathfrak {so}_{2n}$ and $\mathfrak{so}_{2n+1}$
as follows\footnote{
We remark that $\tilde w$ in \cite[Lemma~5.14]{Hu} should be read as $ v_{\mathbf j}$ in Lemma~\ref{usefullemma} so that one can get a suitable  induction assumption in Cases~2--3 in the proof of  \cite[Lemma~5.14]{Hu}.}.

\begin{lemma}\label{usefullemma}(cf.\cite[Lemma~5.14]{Hu}) Fix a positive integer $s$ with $1\le s\le f$ and assume that
 $\mathbf i\in I(N, a)$ such that either $1\leq i_j\le  n-f$ or $n'\leq i_j\leq (n-f+s+1)'$ for  each integer $j$ with  $1\leq j\leq a$. Suppose that  $d$ is a distinguish right coset representative of $\mathfrak S_{2s, a}$
in  $\mathfrak S_{2s+a}$, where $\mathfrak S_{2s, a}$
is the subgroup of $\mathfrak S_{2s+a}$ generated by $s_j$ with $j\neq 2s$.  If
 $J=(a+1,a+2,\cdots,a+2s)$,  and $\mathbf j=((n-f+s)',  \cdots, (n-f+2)', (n-f+1)', n-f+1,
 \cdots, n-f+s)$, then
$$(v_{\mathbf i} \otimes v_{\mathbf j})T_{d^{-1}}=q^z\delta_{d,d_J} v_{\mathbf j}\otimes v_{\mathbf i} +\sum_{{\bf u}\in I(N,2s+a)}a_{\bf u}v_{\bf u},$$
for some $z\in\mathbb Z$ such that  $a_{\bf u}\neq0 $ only if $\ell(u_1,\cdots,u_{2s})<s$, and
$x\not\in \{u_1, u_2, \ldots, u_{2s}\}$ for any positive integer $x$ satisfying  either  $(n-f)'\le x\leq 1'$ or $n-f+s+1\leq x\leq  n$.
\end{lemma}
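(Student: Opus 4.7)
The plan is to follow the argument of \cite[Lemma~5.14]{Hu} essentially verbatim. The key observation that enables this transfer from $\mathfrak{sp}_{2n}$ to the orthogonal types $\mathfrak{so}_{2n}$ and $\mathfrak{so}_{2n+1}$ is that Corollary~\ref{fact} holds uniformly for all three types: $v_{j_1}\otimes v_{j_2}$ appears in $(v_{i_1}\otimes v_{i_2})\check{R}$ only when $j_1\le i_2$ and $j_2\ge i_1$. As the authors point out in the remark preceding the lemma, Hu's argument in Cases~2--3 of his Lemma~5.14 invokes only this inequality (his equation (5.13)) rather than the explicit form of $\check{R}$ itself, so the same reasoning applies once Corollary~\ref{fact} is in place.

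I would proceed by induction on $\ell(d)$. Writing a reduced expression $d=s_{k_1}\cdots s_{k_\ell}$ so that $T_{d^{-1}}=T_{k_\ell}\cdots T_{k_1}$, I would apply these simple factors to $v_{\mathbf i}\otimes v_{\mathbf j}$ one at a time, expanding via \eqref{form} and tracking which basis vectors $v_{\mathbf u}$ occur at each step. The hypothesis that $\mathbf i$ has entries in $\{1,\ldots,n-f\}\cup\{n',\ldots,(n-f+s+1)'\}$ and that $\mathbf j$ consists of the ``middle'' entries $(n-f+s)',\ldots,(n-f+1)',n-f+1,\ldots,n-f+s$ produces a sharp numerical gap between the two ranges. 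Combined with Corollary~\ref{fact}, this gap prevents any forbidden value $x$ (with $(n-f)'\le x\le 1'$ or $n-f+s+1\le x\le n$) from being created in the first $2s$ coordinates during any step of the expansion, since none of the original entries lie in those intervals and the inequality $j_1\le i_2$, $j_2\ge i_1$ cannot move entries across this gap.

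The main obstacle is the combinatorial bookkeeping required to identify when the leading term arises: one has to show that a term $v_{\mathbf u}$ with $\ell(u_1,\ldots,u_{2s})=s$ occurs only when $d=d_J$, and in that case equals $q^z v_{\mathbf j}\otimes v_{\mathbf i}$ for an explicit integer $z$. For this, one inductively verifies that any tensor attaining length-$s$ pairing among its first $2s$ coordinates must have those coordinates equal (as a multiset) to the paired entries of $\mathbf j$, which in turn forces the underlying permutation to be the unique distinguished coset representative $d_J$ sending $\{1,\ldots,2s\}$ to $\{a+1,\ldots,a+2s\}$ in the prescribed order. The scalar $q^z$ accumulates from the diagonal contributions in the first and fourth cases of \eqref{form} during the sequence of swaps; its exact value is determined by counting matching pairs encountered along the chosen reduced word, exactly as in Hu's computation.
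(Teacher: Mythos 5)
Your proposal matches the paper's treatment: the paper likewise gives no independent argument but observes that Hu's proof of \cite[Lemma~5.14]{Hu} in Cases~2--3 relies only on the involvement inequality \cite[(5.13)]{Hu}, which is exactly Corollary~\ref{fact} and holds uniformly for $\mathfrak{so}_{2n}$ and $\mathfrak{so}_{2n+1}$, so the same induction on $\ell(d)$ goes through verbatim. Your additional sketch of the bookkeeping (the gap between the entry ranges of $\mathbf i$ and $\mathbf j$, and the identification of the leading term with $d=d_J$) is consistent with Hu's original argument and with the paper's intent.
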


Following \cite{Hu}, let  \begin{equation}\label{if} I_f=\left\{(b_1,\cdots,b_{r-2f})\mid 1\leq b_{r-2f}<\cdots<b_2<b_1\leq n-f\right\}.\end{equation}
The following result is the counterpart of \cite[Lemma~5.15]{Hu}. It can be proved by arguments similar to those in the proof of \cite[Lemma~5.15]{Hu}. The difference is that one needs to use
Lemma~\ref{usefullemma} instead of \cite[Lemma~5.14]{Hu}.

\begin{lemma}\label{u2}(cf.  \cite[Lemma~5.15]{Hu})  Suppose  $v=v_{\bf b}\otimes v_{\bf c}\in V^{\otimes r}$ for some  ${\bf b}\in I_f$ and  ${\bf c}=(n',(n-1)',\cdots,(n-f+1)',n-f+1,\cdots,n-1,n)$\footnote{The element  $v_{\bf c}$ in \cite[Lemma~5.18]{Hu} should be read as current $v_{\bf c} $ so as to be compatible with $v_{\mathbf j}$ in Lemma~\ref{usefullemma}.}.  If
 $w\in \mathscr D_f$ such that  $w\neq d_0d_{J_0}$, then $ (v) T^*_wE^f =0$.
\end{lemma}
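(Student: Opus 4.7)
The proof follows the strategy of Hu's argument for \cite[Lemma~5.15]{Hu}, now adapted to $\mathfrak g\in\{\mathfrak{so}_{2n},\mathfrak{so}_{2n+1}\}$; the main ingredients are Lemma~\ref{usefullemma} (the analogue of \cite[Lemma~5.14]{Hu}) together with Lemma~\ref{dd} and the explicit $\check R$-formulae of Lemma~\ref{basic2}. First I would use the factorisation $\mathscr D_f=\dot\bigcup_{J\in P_f}\mathscr D^f d_J$ to write $w=w_0 d_J$ with $w_0\in\mathscr D^f$ and $J\in P_f$. Since lengths add, $T_w^{*}=T_{w^{-1}}=T_{d_J^{-1}}T_{w_0^{-1}}$. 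The hypothesis $w\neq d_0 d_{J_0}$ then splits into two cases: (i) $J\neq J_0$, or (ii) $J=J_0$ and $w_0\neq d_0$.

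For case (i), I would apply Lemma~\ref{usefullemma} with $s=f$, $a=r-2f$, $\mathbf i=\mathbf b$ and $\mathbf j=\mathbf c$; the hypotheses are met because $\mathbf b\in I_f$ has entries in $\{1,\ldots,n-f\}$ and $\mathbf c$ is exactly the nested-pair configuration appearing in that lemma. Since $J\neq J_0$, the Kronecker-delta term drops, yielding
$$
(v)T_{d_J^{-1}}=\sum_{\mathbf u}a_{\mathbf u}v_{\mathbf u},\quad\text{with }\ell(u_1,\ldots,u_{2f})<f\text{ for every term.}
$$
Since $w_0\in\mathfrak S_{2f}$, the subsequent operator $T_{w_0^{-1}}$ acts only on the first $2f$ tensor slots. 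A direct case analysis of the formulae \eqref{form} shows that each insertion of $\check R$ at positions within these slots either permutes indices or swaps one conjugate pair for another, thereby preserving the maximal number of disjoint conjugate pairs among the first $2f$ coordinates. Hence every term in $(v)T_w^{*}$ still has strictly fewer than $f$ such pairs, whereas $E^f=E_1E_3\cdots E_{2f-1}$ requires a genuine conjugate pair at each slot $(2i-1,2i)$; by \eqref{ei} it annihilates each such term, so $(v)T_w^{*}E^f=0$.

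For case (ii), the same invocation of Lemma~\ref{usefullemma} gives
$$
(v)T_{d_{J_0}^{-1}}=q^z\, v_{\mathbf c}\otimes v_{\mathbf b}+\sum_{\mathbf u}a_{\mathbf u}v_{\mathbf u},
$$
and the error sum is eliminated exactly as in case (i). The leading term factorises as $\bigl((v_{\mathbf c})T_{w_0^{-1}}\bigr)\otimes v_{\mathbf b}$ because $w_0\in\mathfrak S_{2f}$, and the claim is reduced to showing $(v_{\mathbf c})T_{w_0^{-1}}E^f=0$ for every $w_0\in\mathscr D^f\setminus\{d_0\}$, a problem internal to $V^{\otimes 2f}$ on the nested-pair configuration $\mathbf c$. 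I would attack this by a secondary induction on $f$ --- the base $f=1$ is vacuous since $\mathscr D^1=\{1\}=\{d_0\}$ --- peeling the outermost pair $(n',n)$ and reapplying Lemma~\ref{usefullemma} with $s=f-1$ to the interior, with Lemma~\ref{dd}(3) used to extend any $w_0\neq d_0$ by a simple reflection to close the descent.

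The main obstacle is the pair-preservation bookkeeping. One must verify, for every case of \eqref{form} (including the new cases that arise in Lemma~\ref{basic2}(2) for $\mathfrak g=\mathfrak{so}_{2n+1}$, in particular those involving the fixed point $n+1=(n+1)'$ whose $\check R$-action is described by Lemma~\ref{basic2}(2)), that $\check R$ never increases the count of disjoint conjugate pairs among the indices it touches. This is what keeps the inequality $\ell(u_1,\ldots,u_{2f})<f$ stable under $T_{w_0^{-1}}$ and ultimately drives the vanishing under $E^f$. The verification is a finite case check using \eqref{form} and \eqref{ei}, but it is the essential new input needed to extend Hu's symplectic argument to the two orthogonal settings.
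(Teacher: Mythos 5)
Your skeleton matches what the paper prescribes (it gives no proof beyond ``argue as in \cite[Lemma~5.15]{Hu}, with Lemma~\ref{usefullemma} replacing \cite[Lemma~5.14]{Hu}''), and the main reductions are sound: the factorisation $w=w_0d_J$ with $T_w^*=T_{d_J^{-1}}T_{w_0^{-1}}$, the application of Lemma~\ref{usefullemma} with $s=f$, $a=r-2f$, the observation that $v_{\mathbf u}E^f\neq 0$ forces $u_{2i-1}=u_{2i}'$ for all $i$ and hence $\ell(u_1,\dots,u_{2f})\geq f$, and the check that the $\check R$-action within the first $2f$ slots never raises $\ell(u_1,\dots,u_{2f})$ (each case of \eqref{form}, and Lemma~\ref{basic2}(2) with the fixed point $n+1=(n+1)'$, either preserves the index multiset or trades one conjugate pair for another). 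Case (i) and the error terms of case (ii) are therefore handled correctly.

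The gap is in the leading term of case (ii), i.e.\ the claim $(v_{\mathbf c})T_{w_0^{-1}}E^f=0$ for $w_0\in\mathscr D^f\setminus\{d_0\}$. This is exactly the $r=2f$ instance of the lemma (where $\mathbf b$ is empty and $d_0d_{J_0}=d_0$), so it is the core of the statement rather than a side issue, and your proposed ``secondary induction on $f$'' does not close. Lemma~\ref{usefullemma} requires the nested block $\mathbf j$ to occupy the \emph{last} $2s$ tensor positions, preceded by entries constrained to lie in $\{1,\dots,n-f\}\cup\{n',\dots,(n-f+s+1)'\}$; after ``peeling the outermost pair'' of $\mathbf c$ the residual nested block $((n-1)',\dots,(n-f+1)',n-f+1,\dots,n-1)$ sits at positions $2,\dots,2f-1$, flanked on the right by $v_n$, so the lemma's hypotheses are not met and it cannot be reapplied to the interior. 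The appeal to Lemma~\ref{dd}(3) is likewise not connected to any inductive mechanism. What is actually needed here is the combinatorial analysis of Steps in Hu's proof: using the involvement constraint of Corollary~\ref{fact} along a reduced expression of $w_0^{-1}$ to show that no $v_{\mathbf u}$ appearing in $(v_{\mathbf c})T_{w_0^{-1}}$ can have its conjugate pairs in the adjacent positions $(1,2),(3,4),\dots,(2f-1,2f)$ unless $w_0=d_0$ (which is the unique element of $\mathscr D^f$ rearranging the nested pairs of $\mathbf c$ into adjacent ones). Until that sub-argument is supplied, the proof is incomplete.
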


For any $v\in V^{\otimes r}$,   let  $\text{ann}(v)=\{x\in\mathscr B_r(\varrho,q) \mid vx=0 \}$.
 The following result, which is the key step in the proof of the injectivity of $\varphi$,  is the counterpart of \cite[Lemma~5.18]{Hu}.
  \begin{lemma}\label{key lemma} (cf.  \cite[Lemma~5.18]{Hu}) Let  $ M$ be  the
  $\kappa$-space spanned by $$S=\{T^*_{d_1}E^fT_\sigma T_{d_2}\mid d_1,d_2\in\mathscr  D_f, d_1\neq d_0d_{J_0}, \sigma\in\mathfrak S_{r-2f} \}.$$ Then   $\mathscr B_r(\varrho,q)^{f}\bigcap  \bigcap_{{\mathbf b}\in I_f}\text{ann}(v_{\mathbf b}\otimes v_{\bf c})=\mathscr B_r(\varrho,q)^{f+1}\oplus M$. \end{lemma}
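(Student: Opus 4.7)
The plan is to follow the proof of \cite[Lemma~5.18]{Hu} closely, using our Lemmas~\ref{usefullemma} and~\ref{u2} as substitutes for the $\mathfrak{sp}_{2n}$-specific ingredients there. For the easy containment $\mathscr B_r(\varrho,q)^{f+1} + M \subseteq \mathscr B_r(\varrho,q)^{f}\cap \bigcap_{\mathbf b\in I_f}\text{ann}(v_{\mathbf b}\otimes v_{\bf c})$, the inclusion $M\subseteq \mathscr B_r(\varrho,q)^f$ is immediate and Lemma~\ref{u2} provides the annihilation condition for $M$. For $\mathscr B_r(\varrho,q)^{f+1}$, the formulas in Lemma~\ref{basic2} together with Corollary~\ref{fact} show that the length function $\ell(\cdot)$ is preserved (or sent to $0$) by the right action of every generator $T_i$ or $E_i$. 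Since $\ell(v_{\mathbf b}\otimes v_{\bf c}) = f$ while $v_{\mathbf j}E^{f+1}\neq 0$ forces $\ell(v_{\mathbf j})\geq f+1$, every element of $\mathscr B_r(\varrho,q)^{f+1}$ annihilates $v_{\mathbf b}\otimes v_{\bf c}$. Directness of the sum follows from Enyang's basis (Theorem~\ref{basis}) together with the standard fact that $\mathscr B_r(\varrho,q)^{f+1}$ is spanned by those basis elements $T^*_{d_1}E^{f''}T_\sigma T_{d_2}$ with $f''\geq f+1$, which are disjoint from the spanning set of $M$ (whose elements all have $f''=f$).

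For the reverse containment, take $x\in \mathscr B_r(\varrho,q)^f\cap\bigcap_{\mathbf b}\text{ann}(v_{\mathbf b}\otimes v_{\bf c})$. Expanding in Enyang's basis and separating the level-$f$ and higher-level parts, write $x = y + x_1 + x_2$ where $y\in \mathscr B_r(\varrho,q)^{f+1}$, $x_1\in M$ collects the level-$f$ terms with $d_1\neq d_0 d_{J_0}$, and $x_2 = T^*_{d_0 d_{J_0}}E^f z$ with $z = \sum_{\sigma\in\mathfrak S_{r-2f},\,d_2\in\mathscr D_f}c_{\sigma,d_2}T_\sigma T_{d_2}$ gathers the remaining level-$f$ terms. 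Since $y$ and $x_1$ annihilate every $v_{\mathbf b}\otimes v_{\bf c}$, the hypothesis forces $(v_{\mathbf b}\otimes v_{\bf c})x_2 = 0$ for all $\mathbf b\in I_f$, and it suffices to conclude $x_2 = 0$.

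The crux is the explicit evaluation of $(v_{\mathbf b}\otimes v_{\bf c})T^*_{d_0 d_{J_0}}E^f$. Using the anti-involutive identity $T^*_w = T_{w^{-1}}$ (a consequence of Lemma~\ref{anti}), one factors $T^*_{d_0 d_{J_0}} = T_{d_{J_0}^{-1}}T_{d_0^{-1}}$, and Lemma~\ref{usefullemma} applied with $s=f$, $a=r-2f$, $d = d_{J_0}$ gives $(v_{\mathbf b}\otimes v_{\bf c})T_{d_{J_0}^{-1}} = q^z v_{\bf c}\otimes v_{\mathbf b}$ modulo strictly shorter length terms. The subsequent applications of $T_{d_0^{-1}}$ (a permutation of the first $2f$ positions) and $E^f = E_1 E_3\cdots E_{2f-1}$, using Lemma~\ref{basic2} and identifying each resulting contracted pair with the invariant element $\alpha$ of Corollary~\ref{1dim}, produce a nonzero scalar multiple $c(\mathbf b)\cdot\alpha^{\otimes f}\otimes v_{\mathbf b}$ plus terms of length strictly less than $f$. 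Consequently the length-$f$ part of $(v_{\mathbf b}\otimes v_{\bf c})x_2$ reads $c(\mathbf b)\sum_{\sigma,d_2}c_{\sigma,d_2}(\alpha^{\otimes f}\otimes v_{\mathbf b})T_\sigma T_{d_2}$. The main obstacle is then linear independence of the vectors $\{(\alpha^{\otimes f}\otimes v_{\mathbf b})T_\sigma T_{d_2}\}$ as $\mathbf b\in I_f$, $\sigma\in\mathfrak S_{r-2f}$, $d_2\in\mathscr D_f$ vary; by the $\mathbf U_\kappa(\mathfrak g)$-invariance of $\alpha^{\otimes f}$ and the weight separation available when $n\geq r$, this reduces to a Schur--Weyl type linear independence for the Hecke algebra action on $V^{\otimes(r-2f)}$ combined with the distinct position-permutations given by different $d_2$, which can be executed verbatim as in the corresponding step of \cite[Lemma~5.18]{Hu}.
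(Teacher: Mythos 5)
Your setup coincides with the paper's: both directions of the easy containment are handled the same way (Lemma~\ref{u2} for $M$, the length argument for $\mathscr B_r(\varrho,q)^{f+1}$, directness from Enyang's basis), and for the converse you make the same reduction, writing $x=T^*_{d_0d_{J_0}}E^f z+h$ with $h\in\mathscr B_r(\varrho,q)^{f+1}\oplus M$ and aiming to show $z=0$. Your computation of $(v_{\mathbf b}\otimes v_{\bf c})T^*_{d_0d_{J_0}}E^f$ via the factorization $T^*_{d_0d_{J_0}}=T_{d_{J_0}^{-1}}T_{d_0^{-1}}$ and Lemma~\ref{usefullemma} with $s=f$, $a=r-2f$ is also consistent with what the paper (following Hu) does.

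The gap is in the last step. You reduce everything to the linear independence of $\{(\alpha^{\otimes f}\otimes v_{\mathbf b})T_\sigma T_{d_2}\}_{\sigma,d_2}$ modulo lower-length terms and assert that this ``can be executed verbatim as in the corresponding step of \cite[Lemma~5.18]{Hu}.'' But Hu's corresponding step is not a one-shot independence argument, and no such ``Schur--Weyl type'' independence is available off the shelf: for distinct $d_2\in\mathscr D_f$ the operators $T_{d_2}$ move the $f$ contracted pairs to overlapping position sets, so the images $(\alpha^{\otimes f}\otimes v_{\mathbf b})T_\sigma T_{d_2}$ genuinely interfere and cannot be separated by weight or position bookkeeping alone. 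The mechanism that actually separates them --- and which your proposal omits --- is the two-stage argument the paper invokes: first show $z_{d_0d_{J_0}}=0$ (Hu's Step~1, using Corollary~\ref{fact} in place of \cite[(5.13)]{Hu}), then use that $d_0d_{J_0}$ is the Bruhat-maximal element of $\mathscr D_f$ (Lemma~\ref{dd}) to run a downward induction on $\ell(d)$, right-multiplying by suitable $T_j$ with $ds_j\in\mathscr D_f$ and $\ell(ds_j)=\ell(d)+1$ as in Lemma~\ref{dd}(3). Your claimed independence is in effect the conclusion of that induction, not an input to it, so as written the proof is circular at its decisive point. A secondary, fixable issue: your identification of the ``length-$f$ part'' of $(v_{\mathbf b}\otimes v_{\bf c})x_2$ tacitly uses that the right action of each $T_i$ does not increase the length $\ell(\cdot)$ and that lower-length terms stay lower-length under $T_\sigma T_{d_2}$; this should be stated and justified from Lemma~\ref{basic2}/Corollary~\ref{fact} rather than assumed.
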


  \begin{proof} Note that $\ell (v_{\bf b})=0$ for any ${\mathbf b}\in I_f$. So,  $\mathscr B_r(\varrho,q)^{f+1}\subseteq\text{ann}(v_{\bf b}\otimes v_{\bf c})$.
  By Lemma~\ref{u2}, we have the result for "$\supseteq$''. Conversely, for any $x\in\mathscr B_r(\varrho,q)^{f}\bigcap \bigcap_{{\bf b}\in I_f}\text{ann}(v_{\bf b}\otimes v_{\bf c})$, By Theorem~\ref{basis} and Lemma~\ref{u2}, we can  write  $$x=T^*_{d_0d_{J_0}}E^f(\sum_{d\in\mathscr D_f}z_dT_d)+h, $$
  where $z_d=\sum_{\sigma\in\mathfrak S_{r-2f}} a_\sigma T_\sigma$, for  $d\in\mathscr D_f$, $ a_\sigma\in\kappa$ and $h\in \mathscr B_r(\varrho,q)^{f+1}\oplus M$. In order to prove the result for  "$\subseteq$", it suffices to show  $z_d=0$, for each $d\in\mathscr D_f$.
  In \cite{Hu}, Hu proved  $z_{d_0d_{J_0}}=0$ for $\mathfrak g=\mathfrak{sp}_{2n}$. Further, since his proof depends on \cite[(5.13)]{Hu} and does not depend on the explicit description
  of  $(v_{i_1}\otimes v_{i_2})\check{R}$, one can use Corollary~\ref{fact} to replace \cite[(5.13)]{Hu} in Step~1 in  the proof of  \cite[Lemma~5.18]{Hu}. So, $z_{d_0d_{J_0}}=0$.
 By Lemma~\ref{dd}, $ d_0d_{J_0}$ is the maximal element of $\mathscr D_f$ with respect to  the Bruhat order. Mimicking arguments in the  proof of Step~2 of  \cite[Lemma~5.18]{Hu}, i.e. by induction on $\ell(d)$ for  $d\in\mathscr D_f$, one can verify $z_{d}=0$ for $ d\neq d_0d_{J_0}$
   \end{proof}

 The following result, which is \cite[Theorem~5.19]{Hu} for $\mathfrak g=\mathfrak{sp}_{2n}$, can be verified via arguments on induction of $\ell(d)$ in  the proof of  \cite[Theorem~5.19]{Hu}.
\begin{lemma}(cf. \cite[Theorem~5.19] {Hu}) \label{kkey}
$\ker\varphi\subseteq \mathscr B_r(\varrho,q)^{f+1}$ if $\ker\varphi\subseteq \mathscr B_r(\varrho,q)^{f}$.
\end{lemma}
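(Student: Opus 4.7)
The plan is to adapt the strategy of \cite[Theorem~5.19]{Hu} from the $\mathfrak{sp}_{2n}$ case to the types at hand. Let $x\in\ker\varphi$. By the hypothesis $x\in\mathscr B_r(\varrho,q)^f$, and since $\varphi(x)=0$ means $x$ annihilates every element of $V^{\otimes r}$, in particular $x$ annihilates $v_{\mathbf{b}}\otimes v_{\mathbf{c}}$ for each $\mathbf{b}\in I_f$, where $\mathbf{c}$ is as in Lemma~\ref{u2}. The first step is to invoke Lemma~\ref{key lemma} to obtain a unique decomposition $x=y+m$ with $y\in\mathscr B_r(\varrho,q)^{f+1}$ and $m\in M$. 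Since the Enyang basis elements (Theorem~\ref{basis}) appearing in $M$ and those spanning $\mathscr B_r(\varrho,q)^{f+1}$ are disjoint, the conclusion $x\in\mathscr B_r(\varrho,q)^{f+1}$ reduces to showing $m=0$.

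Expanding
\begin{equation*}
m=\sum_{\substack{d_1,d_2\in\mathscr D_f,\,d_1\neq d_0d_{J_0}\\ \sigma\in\mathfrak S_{r-2f}}}a_{d_1,\sigma,d_2}\,T^*_{d_1}E^fT_\sigma T_{d_2},
\end{equation*}
I will argue by downward induction on $\ell(d_1)$ with respect to the Bruhat order on $\mathscr D_f$, starting from those $d_1\neq d_0d_{J_0}$ of maximal length (by Lemma~\ref{dd}(1)--(2) the unique top of $\mathscr D_f$ is $d_0d_{J_0}$ itself). For each such $d_1$, Lemma~\ref{dd}(3) produces some $s_j$ with $d_1s_j\in\mathscr D_f$ and $\ell(d_1s_j)=\ell(d_1)+1$. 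Exploiting that $\ker\varphi$ is a two-sided ideal, so $T_wx\in\ker\varphi\cap\mathscr B_r(\varrho,q)^f$ for every $w\in\mathfrak S_r$ and Lemma~\ref{key lemma} applies to $T_wx$ as well, I will evaluate $x$ on a suitable twisted test vector of the form $(v_{\mathbf{b}}\otimes v_{\mathbf{c}})\cdot T_{d_1s_j}$: by Lemma~\ref{usefullemma} together with the triangularity of $\check R$ recorded in Corollary~\ref{fact}, the only basis summands of $m$ that can make a non-trivial contribution are those with $d_1'=d_1$ (whose coefficient is exactly $a_{d_1,\sigma,d_2}$, up to explicit nonzero factors) and those with $\ell(d_1')>\ell(d_1)$, which are already zero by the induction hypothesis. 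The identity then forces $a_{d_1,\sigma,d_2}=0$ for all $\sigma$ and $d_2$.

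The principal obstacle I anticipate is the combinatorial bookkeeping in the inductive step: one must verify that, for each $d_1\neq d_0d_{J_0}$, the twisted test vector associated to $d_1$ genuinely isolates the $d_1$-stratum of $m$ modulo the higher-length strata. The action of $\check R$ in types $B$ and $D$ carries extra contributions beyond the $\mathfrak{sp}_{2n}$ case (the diagonal $E_{n+1,n+1}^{\otimes 2}$ term for type $B$ and the $\varepsilon_i$ sign factors), but the only property of $\check R$ that drives Hu's induction is its triangularity (Corollary~\ref{fact}), which is uniform across the three types. Consequently, Step~2 of the proof of \cite[Lemma~5.18]{Hu}, already recycled in the proof of Lemma~\ref{key lemma}, transfers verbatim. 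Once $m=0$ is established, $x=y\in\mathscr B_r(\varrho,q)^{f+1}$, completing the induction.
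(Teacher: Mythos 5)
Your proposal follows the same route as the paper, whose proof is exactly the induction on $\ell(d)$ from Hu's Theorem~5.19: reduce via Lemma~\ref{key lemma} to showing $m=0$, then climb $\mathscr D_f$ toward $d_0d_{J_0}$ using Lemma~\ref{dd}(3), the two-sided-ideal property of $\ker\varphi$, and the triangularity of Corollary~\ref{fact}, which is the only type-dependent input. Two cosmetic points to tighten: the descending induction hypothesis must be quantified over all elements of $\ker\varphi$ (so that it applies to the new element $T_jx$, whose $(d_1s_j)$-stratum is what isolates $a_{d_1,\sigma,d_2}$ --- Lemma~\ref{key lemma} alone only kills the top stratum $d_0d_{J_0}$), and the correct one-step multiplier is the single generator $T_j$ rather than $T_{d_1s_j}$, since the $d_1$-index sits to the left of $E^f$ and lengths need to add.
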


\begin{theorem}\label{isom}  Let  $V$ be  the natural representation of  $\mathbf U_\kappa(\mathfrak g)$  with $\mathfrak g\in \{
\mathfrak{sp}_{2n}, \mathfrak {so}_{2n}, \mathfrak {so}_{2n+1}\}$.   Then $\varphi$ defined in \eqref{isoo1}  is a $\kappa$-algebra isomorphism
if \begin{enumerate}\item  $\mathfrak g=\mathfrak {sp}_{2n}$ with $n\ge r$, \item   $\mathfrak g\in \{ \mathfrak {so}_{2n}, \mathfrak {so}_{2n+1}\}$ with  $n>r$.\end{enumerate}
\end{theorem}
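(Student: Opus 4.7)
The plan is to establish injectivity and surjectivity of $\varphi$ separately, then combine them.

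For injectivity, I would proceed by induction on $f$ to show $\ker \varphi \subseteq \mathscr B_r(\varrho, q)^f$ for every $f \geq 1$. The base case $f=1$ is exactly Lemma~\ref{ker}, which uses the hypothesis $n \geq r$ and the explicit action formula for $T_w$ on the vector $v = v_r \otimes v_{r-1} \otimes \cdots \otimes v_1$. The inductive step, $\ker \varphi \subseteq \mathscr B_r(\varrho,q)^f$ implies $\ker \varphi \subseteq \mathscr B_r(\varrho,q)^{f+1}$, is precisely Lemma~\ref{kkey}. Since $E^g = E_1 E_3 \cdots E_{2g-1}$ requires $2g \leq r$, the ideal $\mathscr B_r(\varrho,q)^{f+1}$ vanishes once $f \geq \lfloor r/2\rfloor$. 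Iterating then forces $\ker \varphi = 0$.

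For surjectivity, I would argue by comparing dimensions. Enyang's basis in Theorem~\ref{basis} gives $\dim_\kappa \mathscr B_r(\varrho,q) = (2r-1)!!$, so together with injectivity it suffices to verify $\dim_\kappa \text{End}_{\mathbf U_\kappa(\mathfrak g)}(V^{\otimes r}) \leq (2r-1)!!$. Since $V$ carries the $\mathbf U(\mathfrak g)$-stable $\mathcal A$-lattice $\bigoplus_i \mathcal A v_i$ (Lemma~\ref{basicl}), everything can be set up integrally, and I would reduce to the generic case $\kappa = \mathbb Q(v)$. There $\mathbf U_v(\mathfrak g)$ is semisimple, and in the stable range $n \geq r$ (resp.\ $n > r$ for types B and D) the decomposition of $V^{\otimes r}$ into irreducibles combined with Schur's lemma yields $\dim_{\mathbb Q(v)} \text{End}_{\mathbf U_v(\mathfrak g)}(V^{\otimes r}) = (2r-1)!!$, matching the corresponding isomorphism in \cite{ha}.

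The main obstacle is the surjectivity step. While injectivity is essentially assembled from the prior lemmas, transferring a dimension bound from $\mathbb Q(v)$ down to an arbitrary $\kappa$ requires care: one must either verify that $\text{End}_{\mathbf U(\mathfrak g)}(V^{\otimes r})$ is a free $\mathcal A$-module of the expected rank that base-changes cleanly to $\kappa$, or equivalently exhibit a basis whose $\varphi$-images are demonstrably linearly independent in $\text{End}_{\mathbf U_\kappa(\mathfrak g)}(V^{\otimes r})$ regardless of $\kappa$. The slightly stronger hypothesis $n > r$ in the orthogonal cases, as opposed to $n \geq r$ for the symplectic case, is dictated by the boundary of the classical Schur-Weyl duality stable range, where the extra weight $v_{n+1}$ in type B or the subtle pairing structure in type D requires an extra unit of rank for the dimension count to match cleanly.
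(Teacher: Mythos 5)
Your injectivity argument coincides with the paper's: Lemma~\ref{ker} gives the base case, Lemma~\ref{kkey} the inductive step, and $\mathscr B_r(\varrho,q)^{f+1}=0$ for $f\ge \lfloor r/2\rfloor$ forces $\ker\varphi=0$. (The paper in fact only needs to run this for the orthogonal cases, since the symplectic case is quoted from Hu.)

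For surjectivity, however, the step you flag as ``the main obstacle'' is a genuine gap, and it is exactly the point where the paper brings in an idea absent from your proposal. A dimension count over $\mathbb Q(v)$ or $\mathbb C(v)$ does not by itself bound $\dim_\kappa \End_{\mathbf U_\kappa(\mathfrak g)}(V^{\otimes r})$ for an arbitrary $\mathcal A$-algebra field $\kappa$: endomorphism algebras do not commute with base change in general, and the dimension can jump at special fibers (upper semicontinuity gives an inequality in the wrong direction for your purposes --- the specialized $\End$ could be \emph{larger} than $(2r-1)!!$). The existence of the $\mathcal A$-lattice in $V$ ensures $\varphi$ is defined over $\kappa$, but it does not ensure that $\End_{\mathbf U(\mathfrak g)}(V^{\otimes r})$ is $\mathcal A$-free of the expected rank. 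The paper closes this gap by observing that $V=\Delta(\epsilon_1)\cong V^*$ is a tilting module, hence so is $V^{\otimes r}$, and then invoking \cite[Lemma~5.1]{AGZ}, which asserts precisely that the dimension of the endomorphism algebra of a tilting module is independent of $\kappa$. Only after that reduction does the generic computation (via \cite[(5.5)]{LR} and Enyang's basis, giving $(2r-1)!!$ on both sides) apply. Without the tilting-module input, your surjectivity argument does not go through; the rest of your outline is sound.
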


\begin{proof}
 We remark that (1) has been proved  in \cite{Hu}.    If $\mathfrak g\in \{\mathfrak{so}_{2n+1}, \mathfrak{so}_{2n}\}$, $\varphi$ is well-defined over $\kappa$ (in fact, over $R$).
 Further, by Lemma~\ref{ker} and  Lemma~\ref{kkey},  $\ker\varphi\in  \mathscr B_r(\varrho,q)^{f}$ for all positive integers $f$, forcing  $\ker\varphi=0$.
In order to complete proof, it is enough to show that the dimensions of $\mathscr B_{r}(\varrho, q)$ and $ \text{End}_{\mathbf U_\kappa (\mathfrak g)}(V^{\otimes r})$ are the same.
It was defined in  \cite[Definition~2.1]{Ander} that a tilting module for $\mathbf U_\kappa(\mathfrak g)$  is  a finite dimensional left $\mathbf U_\kappa(\mathfrak g)$-module which has a  Weyl-filtration  and  a co-Weyl filtration.
Since $V=\Delta(\epsilon_1)$, the Weyl module with highest weight $\epsilon_1$,  and $V\cong V^*$, $V$ is a tilting module for  $\mathbf U_\kappa(\mathfrak g)$ and so is $V^{\otimes r}$. By Lemma~5.1 in \cite{AGZ},  the dimension of $ \text{End}_{\mathbf U_\kappa(\mathfrak g)}(V^{\otimes r})$ is independent of $\kappa$. In particular,  we  assume $\kappa=\mathbb C(v)$ where $v$ is an indeterminate. In this case, $V^{\otimes r}$ is completely reducible.  By \cite[(5.5)]{LR} and  Enyang's construction of Jucys-Murphy basis of $\mathscr B_r(\varrho, q)$  in \cite{Enyang},   the dimension of $ \text{End}_{\mathbf U_\kappa(\mathfrak g)}(V^{\otimes r})$ is equal to that of  $\mathscr B_{r}(\varrho, q )$. So, $\varphi$ is surjective.  \end{proof}

\section{ An invariant  form on $V^{\otimes r}$}
 In this section, we always  assume  that  $\kappa$ is  a field containing $q$  (resp., $q^{1/2}$ if $\mathfrak g=\mathfrak{so}_{2n+1}$) such that $q^2\neq 1$. Let  $ V$  be   the natural representation of
  $\mathbf U_\kappa(\mathfrak g)$, with
 $\mathfrak g\in \{\mathfrak{so}_{2n+1}, \mathfrak{sp}_{2n},
 \mathfrak{so}_{2n}\} $.   The aim of this section is to prove  that  $V^{\otimes r}$  is self-dual
 as $(\mathbf U_\kappa(\mathfrak g), \mathscr B_r(\varrho, q)$)-module if $\varrho$ is given in \eqref{varrho}.

First, we consider $\mathfrak g=\mathfrak{so}_{2n+1}$. For any  $\mathbf i\in I(2n+1, r)$, define $ {\mathbf i}'\in I(2n+1, r)$ such that $ {\mathbf i}'=( i_r',  i_{r-1}', \
\cdots,  i_1')$ if $\mathbf i =(i_1, \cdots, i_r)$, where  $ i'=2n+2-i$,  and  ${ i}''=i$,  $1\le i\le n$.

\begin{lemma}\label{bi}   For any positive integer $r$,
define the $\kappa$-bilinear form $\langle \ \   ,\ \ \rangle: V^{\otimes r}\times V^{\otimes r}\rightarrow \kappa$    such that
\begin{equation} \label{bi12} \langle v_{\mathbf i} ,v_{\mathbf j} \rangle=q^{-\rho_{\mathbf i} }\delta_{\mathbf i, {\mathbf j}'}, \quad \text{for $ \mathbf i, \mathbf j\in I(2n+1,r)$,}\end{equation}
 where $\rho_{\mathbf i} =\sum_{k=1}^r\rho_{i_k}$, and $\rho$ is given in \eqref{rho}.
\begin{enumerate}\item The bilinear form  $\langle \ \   ,\ \ \rangle $  is   non-degenerate.
\item $\langle a v_{\mathbf i} ,v_{\mathbf j} \rangle=\langle v_{\mathbf i}  , S(a)v_{\mathbf j} \rangle $, $a\in \mathbf U_{\kappa}(\mathfrak {so}_{2n+1})$,  $\mathbf i, \mathbf j\in I(2n+1, r)$, where $S$ is the antipode of  $\mathbf U_\kappa(\mathfrak {so}_{2n+1})$ given in \eqref{coalg}.
\item $ \langle  v_{\mathbf i} b,v_{\mathbf j} \rangle=\langle v_{\mathbf i} , v_{\mathbf j}  \tilde{\sigma}(b)\rangle$,  $b\in\mathscr B_r(\varrho, q)$,  $\mathbf i, \mathbf j\in I(2n+1, r)$,  where $\tilde{\sigma}$ is the anti-involution on $\mathscr B_{r}(\varrho, q)$
given in Lemma~\ref{anti}(3).
\end{enumerate}
\end{lemma}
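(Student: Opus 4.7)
The plan is to handle the three assertions in order. Part (1) is immediate: the map $\mathbf i\mapsto\mathbf i'$ is an involutive bijection on $I(2n+1,r)$, so in the basis $\{v_{\mathbf i}\}$ the Gram matrix of $\langle\,,\rangle$ is a permutation matrix rescaled by the invertible entries $q^{-\rho_{\mathbf i}}$, hence the form is non-degenerate. Parts (2) and (3) both follow the same pattern: verify on a set of algebra generators and extend using the anti-multiplicativity of $S$ (resp.\ $\tilde\sigma$).

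For (2), since $S$ is an anti-algebra map, once the identity $\langle a v_{\mathbf i}, v_{\mathbf j}\rangle = \langle v_{\mathbf i}, S(a)v_{\mathbf j}\rangle$ holds for $a = x$ and $a = y$ it propagates to $a = xy$, so it suffices to test $a = e_i, f_i, k_i^{\pm 1}$. The $k_i^{\pm 1}$ case is immediate because $v_{\mathbf i}$ and $v_{\mathbf j'}$ are simultaneous weight vectors of opposite weights (since $\wtt(v_k) = -\wtt(v_{k'})$ for $k\ne n+1$ and $\wtt(v_{n+1}) = 0$). For $e_i$ and $f_i$ I would induct on $r$ via the coproduct \eqref{coalg}. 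The crucial observation is that the order reversal $\mathbf j\mapsto\mathbf j'$ built into the pairing effectively swaps $\Delta$ into $\Delta^{op}$ on the right argument, which is exactly the adjustment needed to reconcile the Hopf identity $\Delta\circ S = (S\otimes S)\circ\tau\circ\Delta$ (where $\tau$ is the tensor swap) with the tensor-product factorization of $\langle\,,\rangle$. The base case $r = 1$ reduces to a handful of direct checks against Lemma~\ref{basicl}(2).

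For (3), the same generation argument reduces the task to checking the identity for $b = T_i$ and $b = E_i$, which satisfy $\tilde\sigma(T_i) = T_{r-i}$ and $\tilde\sigma(E_i) = E_{r-i}$. The point is that $T_i$ and $E_i$ modify only positions $(i, i+1)$ of $v_{\mathbf i}$, while under the pairing these positions are identified with positions $(r-i, r-i+1)$ of $v_{\mathbf j}$, which is exactly where $T_{r-i}$ and $E_{r-i}$ act. The problem therefore reduces to the local identity on $V\otimes V$,
$$\langle (v_a\otimes v_b)\breve R, v_c\otimes v_d\rangle = \langle v_a\otimes v_b, (v_c\otimes v_d)\breve R\rangle,$$
together with its analogue for $E$, which I would verify case by case from Lemma~\ref{basic2}.

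The principal obstacle is this last local verification for $\mathfrak{so}_{2n+1}$: the extra summand $X = E_{n+1,n+1}\otimes E_{n+1,n+1}$ in $\breve R$ and the self-dual middle index $(n+1)' = n+1$ force careful bookkeeping of the scalars $q^{\rho_i - \rho_j}\varepsilon_i\varepsilon_j$ against the weights $q^{-\rho_{\mathbf i}}$ coming from the pairing. The symmetries $\rho_i + \rho_{i'} = 0$ and (since $\mathfrak g = \mathfrak{so}_{2n+1}$) $\varepsilon_i\varepsilon_{i'} = 1$ should ensure that the matching scalars cancel and the identity closes up cleanly on each case, but the case analysis is the substantive part of the argument.
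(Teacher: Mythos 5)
Your treatment of (1) and (3) is the paper's argument: non-degeneracy is read off from \eqref{bi12}, and for (3) the paper likewise observes that $\mathscr B_r(\varrho,q)$ is generated by the $T_i^{\pm1}$, that the pairing couples position $k$ of the first factor with position $r+1-k$ of the second (so $T_i$ faces $T_{r-i}$), and then verifies the resulting $r=2$ identity $\langle v_{\mathbf i}T_1,v_{\mathbf j}\rangle=\langle v_{\mathbf i},v_{\mathbf j}T_1\rangle$ case by case from \eqref{form}, exactly as you propose (your extra check for $E_i$ is redundant, since $E_i$ is a polynomial in $T_i^{\pm1}$). Where you diverge is part (2): you propose a generator-by-generator verification of $\langle av,w\rangle=\langle v,S(a)w\rangle$ for $a=e_i,f_i,k_i^{\pm1}$ by induction on $r$ through the coproduct, with a direct $r=1$ base case. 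The paper instead notes that $\varphi(v_i)=q^{-\rho_i}v_{i'}^*$ defines a $\mathbf U_\kappa(\mathfrak{so}_{2n+1})$-isomorphism $V\cong V^*$, invokes the general Hopf-algebra isomorphism $M^*\otimes N^*\cong (N\otimes M)^*$ to get $\Phi\map{V^{\otimes r}}{(V^{\otimes r})^*}$, and checks $\Phi(v_{\mathbf i})(v_{\mathbf j})=\langle v_{\mathbf i},v_{\mathbf j}\rangle$, so that (2) is just the statement that $\Phi$ is a module map. The two routes prove the same thing; the paper's is shorter and packages the order reversal and the antipode twist into a quotable categorical fact (which is then reused for Corollary~\ref{duforb}), while yours re-derives that fact by hand and must carry the Hopf identity relating $\Delta$, $S$ and the tensor swap through the induction. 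Both are correct; if you follow your route, the only point demanding care is the sign and $q^{\pm1/2}$ bookkeeping at $i=n$ and at the self-paired index $n+1$ in the base case, which does close up because $\rho_i+\rho_{i'}=0$.
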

\begin{proof} We remark that (1) follows from  \eqref{bi12}, immediately. Let $V^*$ be the $\kappa$-linear dual of $V$. Then $V\cong V^*$ as left $\mathbf U_\kappa(\mathfrak{so}_{2n+1})$-modules  and the corresponding isomorphism $\varphi$  satisfies
  \begin{equation} \label{varphi1}  \varphi (v_i)=q^{-\rho_i}v_{{i'}}^*,\quad  1\le i\le 2n+1, \end{equation}
 where  $\{v_i^\ast\mid  1\le i\le 2n+1\}$ is the dual basis of a basis  $\{v_i\mid 1\le i\le 2n+1\}$ of $V$.
  By  Proposition~111.5.2 in \cite{CK},  $ M^*\otimes N^*\cong (N\otimes M)^*$
for  any finite dimensional $\mathbf U_{\kappa}(\mathfrak {so}_{2n+1}) $-modules $M$ and $N$.  So $(V^*)^{\otimes r}\cong  (V^{\otimes r})^*$ and the corresponding isomorphism $\Psi: (V^*)^{\otimes r}\rightarrow  (V^{\otimes r})^*$ satisfies
\begin{equation} \label{psi}  \Psi(v_{i_1}^*\otimes \cdots\otimes v_{i_r}^*)=(v_{i_r}\otimes \cdots\otimes v_{i_1})^*, \quad \mathbf i\in I(2n+1, r). \end{equation}
 Thus \begin{equation}\label{Phi}  \Phi:  V^{\otimes r} \cong (V^{\otimes r})^* \end{equation}  as
 left $\mathbf U_\kappa(\mathfrak {so}_{2n+1}) $-modules where $\Phi= \Psi\circ \varphi^{\otimes r}$.
It is routine to check that   \begin{equation}\label{Phi1} \Phi(v_{\mathbf i})(v_{\mathbf j} )=\langle v_{\mathbf i}, v_{\mathbf j} \rangle, \quad \forall \mathbf i, \mathbf j\in I(2n+1, r).\end{equation} Now, (2) follows since it is equivalent to saying that   $\Phi$ is a left $\mathbf U_\kappa(\mathfrak {so}_{2n+1})$-homomorphism.
By  Definition~\ref{bmw-def},  $\mathscr B_r(\varrho, q)$ can be generated by $T_i^{\pm 1}$, $1\le i\le r-1$.
 In order to prove (3), by  \eqref{bi12},  it suffices to verify \begin{equation} \label{bi11} \langle  v_{\mathbf i} ,v_{\mathbf j}  T_1\rangle=\langle  v_{\mathbf i} T_1,v_{\mathbf j} \rangle\end{equation} for $r=2$. If so, we have  $\langle  v_{\mathbf i} ,v_{\mathbf j}  T_1^{-1}\rangle=\langle  v_{\mathbf i}T_1^{-1} T_1,v_{\mathbf j} T_1^{-1} \rangle=\langle  v_{\mathbf i}T_1^{-1} ,v_{\mathbf j}\rangle $, proving (3).

By  \eqref{form}, it is easy to check \eqref{bi11}  if $i_1\neq  i_2'$. Assume $i_1= i_2'$ and
 write  $\delta=q-q^{-1}$.   If $\mathbf i=(i_1, i_2)=(n+1, n+1)$, then
$$\langle  v_{\mathbf i} ,v_{\mathbf j}  T_1\rangle=\langle  v_{\mathbf i} T_1,v_{\mathbf j} \rangle=
\begin{cases}1, \quad & j_1=j_2=n+1,\\
-\delta q^{-\rho_{j_1}}, \quad& j_2= j_1'>n+1,\\ 0 , & \text{ otherwise.} \\
\end{cases} $$
Suppose $\mathbf i\neq (n+1, n+1)$.
If $i_1>i_2$, then
$$\langle  v_{\mathbf i} ,v_{\mathbf j}  T_1\rangle=\langle  v_{\mathbf i} T_1,v_{\mathbf j} \rangle=
\begin{cases}q^{-1}, \quad & (j_1,j_2)=(i_2,i_1),\\
-\delta q^{\rho_{j_2}-\rho_{i_1}}, & \text{ $j_2= {j_1}'>i_1$, } \\ 0 ,  & \text{ otherwise. }  \\
\end{cases} $$
If $i_1<i_2$ and  $\mathbf j=\mathbf i$, then  \eqref{bi11} follows from  \eqref{bi12}. If $i_1<i_2$ and  $\mathbf j\neq \mathbf i$,
$$ \langle  v_{\mathbf i} ,v_{\mathbf j}  T_1\rangle=\langle  v_{\mathbf i} T_1,v_{\mathbf j} \rangle=
\begin{cases}q^{-1},  & (j_1,j_2)=(i_2,i_1),\\
-\delta q^{\rho_{j_2}-\rho_{i_1}}, & \text{ $i_2\neq j_2=j_1'>i_1$,}\\
0 ,\quad & \text{ otherwise.}
\end{cases} $$
In any case, we have \eqref{bi11}, proving (3).  \end{proof}

For any right  $\mathscr B_r(\varrho, q)$-module $M$,   $M^* $ is a  right  $\mathscr B_r(\varrho, q)$-module such that
\begin{equation}\label{dualm1}  (\phi b) (x)=\phi(x\tilde{\sigma}(b)),  \forall \phi\in M^*, b\in \mathscr B_r(\varrho, q), x\in M, \end{equation} where $\tilde{\sigma}$ is the anti-involution on $\mathscr B_{r}(\varrho, q)$
given in Lemma~\ref{anti}(3).

\begin{coro}\label{duforb}  As $(\mathbf U_{\kappa}(\mathfrak {so}_{2n+1}),  \mathscr B_r(\varrho, q))$--bimodules, $ V^{\otimes r}\cong (V^{\otimes r})^* $  where
$\varrho$ is given in \eqref{varrho}.
\end{coro}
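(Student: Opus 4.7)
The plan is to extract the bimodule isomorphism directly from the map $\Phi\colon V^{\otimes r}\to (V^{\otimes r})^*$ already constructed inside the proof of Lemma~\ref{bi}. Recall that $\Phi$ is defined on basis vectors by $\Phi(v_{\mathbf i})(v_{\mathbf j})=\langle v_{\mathbf i},v_{\mathbf j}\rangle$, which is a $\kappa$-linear isomorphism because the bilinear form is non-degenerate by Lemma~\ref{bi}(1), and which already intertwines the left $\mathbf U_\kappa(\mathfrak{so}_{2n+1})$-action by Lemma~\ref{bi}(2). Thus the entire content of the corollary reduces to upgrading $\Phi$ to a right $\mathscr B_r(\varrho,q)$-module homomorphism.

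The second step is the compatibility check. Using the right action on $(V^{\otimes r})^*$ defined in \eqref{dualm1} via the anti-involution $\tilde\sigma$ of Lemma~\ref{anti}(3), for any $b\in\mathscr B_r(\varrho,q)$ and any basis vectors $v_{\mathbf i},v_{\mathbf j}$ I would compute
$\Phi(v_{\mathbf i}b)(v_{\mathbf j})=\langle v_{\mathbf i}b,v_{\mathbf j}\rangle=\langle v_{\mathbf i},v_{\mathbf j}\tilde\sigma(b)\rangle=\Phi(v_{\mathbf i})(v_{\mathbf j}\tilde\sigma(b))=(\Phi(v_{\mathbf i})\cdot b)(v_{\mathbf j})$,
the middle equality being exactly Lemma~\ref{bi}(3). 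Extending $\kappa$-linearly yields $\Phi(vb)=\Phi(v)\cdot b$ for all $v\in V^{\otimes r}$ and all $b\in\mathscr B_r(\varrho,q)$, so $\Phi$ is a $(\mathbf U_\kappa(\mathfrak{so}_{2n+1}),\mathscr B_r(\varrho,q))$-bimodule isomorphism.

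No genuine obstacle is expected here: the bilinear form in Lemma~\ref{bi} was engineered precisely so that compatibilities (2) and (3) hold, and the definition \eqref{dualm1} of the right action on the linear dual uses exactly the anti-involution $\tilde\sigma$ that appears in Lemma~\ref{bi}(3). The only points that warrant a brief sanity check are that the two occurrences of $\tilde\sigma$ coincide (they do, both being $\sigma\circ\gamma$ from Lemma~\ref{anti}) and that the isomorphism statement really requires nothing beyond non-degeneracy of $\langle\ ,\ \rangle$, which has already been recorded. The corollary is therefore essentially a one-line consequence of Lemma~\ref{bi}.
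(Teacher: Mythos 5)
Your proposal is correct and follows exactly the paper's own route: the paper's proof simply cites Lemma~\ref{bi}(2)--(3) together with \eqref{Phi1} to conclude that the map $\Phi$ of \eqref{Phi} is the required bimodule isomorphism, which is precisely the verification you spell out. The only cosmetic difference is that the paper obtains invertibility of $\Phi$ from its construction as $\Psi\circ\varphi^{\otimes r}$ rather than from non-degeneracy of the form, but these are equivalent here.
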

\begin{proof} By Lemma~\ref{bi}(2)--(3) and  \eqref{Phi1}, the $\Phi$ given in \eqref{Phi} is the required isomorphism.\end{proof}

Now, we assume  $\mathfrak g\in \{\mathfrak{sp}_{2n}, \mathfrak{so}_{2n}\}$.
Recall that  $\tau :  \mathbf U_{\kappa }(\mathfrak  g) \rightarrow \mathbf U_\kappa(\mathfrak  g)  $ is an anti-automorphism such that
\begin{equation} \label{tau1} \tau(k_i)=k_i, \tau(e_i)=f_i \text{ and }  \tau(f_i)=e_i, \ \ \text{$1\le i\le n$.}\end{equation}
For any left $ \mathbf U_\kappa(\mathfrak g)$-module $N$, let $N^{\circ}$  be the left $ \mathbf U_\kappa (\mathfrak g)$-module  such that $N^{\circ}=N^*$  as $\kappa$-vector spaces, and the action is given by
\begin{equation} \label{contramod}
(u\phi)(x)=\phi(\tau(u)x), \forall x\in N, u\in \mathbf U_{\kappa}(\mathfrak g),\phi\in N^*.\end{equation}
Let $\varrho\in \kappa$ be given in \eqref{varrho}. For any right $\mathscr B_r(\varrho, q)$-module $M$, let  $M^{\circ}$ be the right  $\mathscr B_r(\varrho, q)$-module  such that $M^{\circ}=M^*$ as $\kappa$-vector spaces, and the action is given by
\begin{equation} \label{contramod1} (\phi b)(y)=\phi(y\sigma(b)), \forall  y\in M, b\in\mathscr B_r(\varrho, q),\phi\in M^*,\end{equation}
where $\sigma $ is the anti-involution on $\mathscr B_{r}(\varrho, q)$  given in Lemma~\ref{anti}.

\begin{lemma}\label{contra}  For any positive integer $r$,
let $\langle\  ,\ \rangle: V^{\otimes r}\times V^{\otimes r}\rightarrow \kappa $ be the bilinear form such that
\begin{equation}\label{mf1}\langle v_{\mathbf i}, v_{\mathbf j} \rangle=q^{\beta_{\mathbf i} }\delta_{\mathbf i,\mathbf j}, \quad \forall  \mathbf i, \mathbf j\in I(2n,r),
\end{equation}
 where $\beta(\mathbf i)=\sharp\{i_k\neq i_j\mid j\neq k, i_j\neq  i_k' \}+2\sharp\{i_k= i_j' \}$. Then
\begin{enumerate}
\item $\langle \ , \ \rangle $  is symmetric and non-degenerate.
\item
$ \langle uv,w\rangle =\langle v,\tau(u)w\rangle$,  $\forall u\in\mathbf U_{\kappa}(\mathfrak g)$ and  $ v,w\in V^{\otimes r}$,  where $\tau$ is the anti-automorphism of $\mathbf U_\kappa(\mathfrak g)$ given in \eqref{tau1}.
\item   $\langle vb,w\rangle =\langle v, w\sigma(b)\rangle $, $\forall  b\in \mathscr B_r(\varrho, q)$ and $ v, w\in V^{\otimes r}$, where $\sigma$ is the anti-involution defined in Lemma~\ref{anti}.
    \end{enumerate}
\end{lemma}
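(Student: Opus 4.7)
The plan is to mirror the strategy of Lemma~\ref{bi}, with two changes: we replace the antipode $S$ by the anti-automorphism $\tau$ (which fixes each $k_i$ instead of inverting it), and the form is diagonal rather than anti-diagonal. This is natural because the relevant self-duality is $V\cong V^\circ$ (under the contravariant dual) rather than $V\cong V^\ast$.

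Part (1) is immediate from the definition: $\beta(\mathbf i)=\beta(\mathbf j)$ when $\mathbf i=\mathbf j$, so the Kronecker delta makes the form symmetric, and the nonzero diagonal entries $q^{\beta(\mathbf i)}$ make it non-degenerate.

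For Part (2) we first establish the $r=1$ case. On $V$, the form $\langle v_a,v_b\rangle=\delta_{ab}$ is $\tau$-contravariant: since $\tau$ fixes $k_i$ and $k_i$ acts diagonally, the identity is trivial for $u=k_i^{\pm 1}$; for $u=e_i$ and $u=f_i$ one checks on basis vectors using the explicit action formulas in Lemma~\ref{basicl}(3)--(4), separately for $i\neq n$ and $i=n$, noting that the signs in the formulas for $e_i,f_i$ balance. For general $r$, iterate the coproduct to obtain
\begin{equation*}
\Delta^{(r-1)}(e_i)=\sum_{j=0}^{r-1} k_i^{\otimes j}\otimes e_i\otimes 1^{\otimes (r-1-j)},\qquad
\Delta^{(r-1)}(f_i)=\sum_{j=0}^{r-1} 1^{\otimes j}\otimes f_i\otimes (k_i^{-1})^{\otimes (r-1-j)}.
\end{equation*}
When $e_i$ acts in slot $j+1$ of $v_{\mathbf i}$, it produces $q$-power factors from the $k_i$'s in slots $1,\dots,j$; correspondingly, when $f_i$ acts in slot $j+1$ of $v_{\mathbf j}$, it produces $q$-power factors from the $k_i^{-1}$'s in slots $j+2,\dots,r$. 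The exponent $\beta(\mathbf i)$ is designed precisely so that the difference of these two sets of prefactors is absorbed into the change $\beta(\mathbf i)\mapsto \beta(\mathbf i')$, where $\mathbf i'$ is obtained from $\mathbf i$ by replacing $i_{j+1}$ with the index of $e_i v_{i_{j+1}}$. Since $\tau$ is an anti-automorphism and both sides are $\kappa$-linear, checking the identity on this generating family suffices; the verification reduces to a case analysis, via Lemma~\ref{basicl}, of how replacing a single entry of $\mathbf i$ affects the two counts in $\beta(\mathbf i)$ relative to the weights of the remaining entries.

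For Part (3), by Definition~\ref{bmw-def} and Lemma~\ref{anti}(1), $\sigma$ fixes every generator $T_k$ of $\mathscr B_r(\varrho,q)$, so it suffices to check $\langle v_{\mathbf i}T_k,v_{\mathbf j}\rangle=\langle v_{\mathbf i},v_{\mathbf j}T_k\rangle$. Since $T_k=\varphi(T_k)$ acts only on the $k$-th and $(k{+}1)$-st tensor factors and the form factorizes on the remaining slots, this reduces to $r=2$ and $k=1$. Using the explicit formula \eqref{form} for $\breve R$, one performs case analysis on $(i_1,i_2)$ and $(j_1,j_2)$ following the five cases of \eqref{form} (i.e.\ $i_1=i_2$; $i_1>i_2$ with $i_1\neq i_2'$; $i_1>i_2=i_1'$; $i_1<i_2$ with $i_1\neq i_2'$; $i_1<i_2=i_1'$), mirroring the verification of \eqref{bi11} in Lemma~\ref{bi}. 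Both diagonal and ``mixing'' contributions in $\breve R$ give equal pairings on the two sides because $\beta(\mathbf i)$ is symmetric in the two indices and the $\delta,\rho_i,\varepsilon_i$-coefficients are arranged so that the two nondiagonal terms contribute the same $q$-power.

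The main obstacle is the $q$-power bookkeeping in Part (2): one must show that the combinatorial quantity $\beta(\mathbf i)$, defined through the two counts involving pairs $(j,k)$, exactly matches the weight imbalance produced by the non-cocommutative coproducts of $e_i$ and $f_i$. Once this match is seen on a single slot, the general case follows by linearity in the remaining tensor factors, and Part (3) then proceeds by the same straightforward (if tedious) case analysis as in the proof of Lemma~\ref{bi}.
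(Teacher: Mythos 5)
Your proposal follows essentially the same route as the paper: part (1) is read off from the definition, part (2) is checked on the generators $e_i,f_i,k_i$ via the iterated coproduct with the prefix/suffix $k_i^{\pm1}$-factors absorbed by $\beta$ (the paper makes this explicit with its counts $\alpha_a,\gamma_a$ of entries before and after the affected slot), and part (3) is the same five-case analysis of $(v_{i_k}\otimes v_{i_{k+1}})\breve R$ using \eqref{form} together with the observation that $\beta(\mathbf i)=\beta(\mathbf j)$ whenever the pairing is nonzero. The only imprecision is your claim that the form ``factorizes on the remaining slots'' --- $\beta$ is a global pair-count, so one cannot literally reduce to $r=2$ --- but this is harmless because, exactly as you note at the end, the cross-terms of the outside slots with a conjugate pair in slots $k,k+1$ are independent of which conjugate pair sits there, which is the paper's $\beta(\mathbf i)=\beta(\mathbf j)$ step.
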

\begin{proof} (1) follows from \eqref{mf1}, immediately. In order to prove (2), it suffices to verify \begin{equation} \label{cont1} \langle uv,w\rangle =\langle v,\tau(u)w\rangle  \end{equation}
for all  $v,w\in V^{\otimes r}$ and $u\in \{e_i, f_i, k_i\mid 1\le i\le n\}$.
It is easy to check \eqref{cont1} if $u=k_i$. Since $\langle \ , \ \rangle $ is symmetric, it remains to check \eqref{cont1} when
$u=e_i$, $1\le i\le n$.

First, we assume $i\neq n$.
Suppose $v=v_{\mathbf i} $ and $w=v_{\mathbf j}$ for $\mathbf i, \mathbf j\in I(2n, r)$. Then $\langle e_i v_{\mathbf i}, v_{\mathbf j}\rangle =0$ unless
there is a  $k, 1\leq k\leq r$ such that  $(i_k,j_k)\in\{(i+1,i), ( i', (i+1)')\} $ and $j_l=i_l$ for all $l\neq k$. In the later case,
let $\alpha_{a}$ (resp., $\gamma_a $) be  the numbers of $ a$ appearing  in $(i_1,\cdots,i_{k-1}) $ (resp., $(i_{k+1},\cdots,i_r)$ ).
Then
\begin{equation}\label{eb}
\beta(\mathbf i)=\beta(\mathbf j)+\alpha_i+\alpha_{ (i+1)'}-\alpha_{ i'}-\alpha_{i+1}+\gamma_i+\gamma_{ (i+1)'}-\gamma_{ i'}-\gamma_{i+1}.\end{equation}
It is routine to check
\begin{equation} \label{eq1} \langle e_iv_{\mathbf i},v_{\mathbf j} \rangle =(-1)^{\delta_{i_k,  i'} } q^{\alpha_i+\alpha_{({i+1})'}-\alpha_{i'}-\alpha_{i+1}+\beta(\mathbf j)}, \end{equation} and
\begin{equation}\label{eq2} \langle v_{\mathbf i},f_iv_{\mathbf j}\rangle = (-1)^{\delta_{i_k,  i'} } q^{-\gamma_i-\gamma_{({i+1})'}+\gamma_{i'}+\gamma_{i+1}+\beta(\mathbf i)}.\end{equation}
 By \eqref{eb}--\eqref{eq2},  $\langle e_iv_{\mathbf i} ,v_{\mathbf j} \rangle = \langle v_{\mathbf i} ,f_iv_{\mathbf j} \rangle $ if  $\langle e_iv_{\mathbf i} ,v_{\mathbf j} \rangle \neq 0$. Finally, it is easy to check   $\langle e_iv_{\mathbf i} ,v_{\mathbf j}\rangle =0$ if and only if
  $\langle v_{\mathbf i} ,f_iv_{\mathbf j} \rangle =0$.

Suppose $i=n$. We have $\langle e_n v_{\mathbf i}, v_{\mathbf j}\rangle =0$ unless one of two conditions holds:
\begin{enumerate} \item [(a)]  $j_l=i_l$ unless $j=k$ for some  $k, 1\leq k\leq r$  and  $(i_k,j_k)=(n', n)$  provided $\mathfrak g=\mathfrak{sp}_{2n}$;
\item [(b)]  $j_l=i_l$ unless   $l= k$ for some    $k, 1\leq k\leq r$  and  $(i_k,j_k)\in\{(n', n-1), (({n-1})', n)\}$  provided $\mathfrak g=\mathfrak{s0}_{2n}$.\end{enumerate}
In case (a),  $\beta(\mathbf i)=\beta(\mathbf j)+2\alpha_n-2\alpha_{{n'}}+2\gamma_n-2\gamma_{{n'}}$,  and  hence
$$\langle e_n v_{\mathbf i} ,v_{\mathbf j} \rangle =q^{2\alpha_n-2\alpha_{{n'}}+\beta(\mathbf j)}=q^{-2\gamma_n+2\gamma_{{n'}}+\beta(\mathbf i)}=(v_{\mathbf i} ,f_n v_{\mathbf j} ).$$
In case (b), $ \beta(\mathbf i)  -\gamma_n-\gamma_{n-1}+\gamma_{{n'}}+\gamma_{(n-1)'}   =\beta(\mathbf j)+\alpha_n+\alpha_{n-1}-\alpha_{{n'}}-\alpha_{(n-1)'}=a$,
and $$\langle e_n v_{\mathbf i} ,v_{\mathbf j} \rangle =(-1)^\epsilon q^{a}
=\langle v_{\mathbf i} ,f_n v_{\mathbf j} \rangle ,$$
where $\epsilon=0$ (resp., $1$) if $i_k= n'$ (resp., $i_k=({n-1})'$). In any case, we have \eqref{cont1} if
$\langle e_n v_{\mathbf i} ,v_{\mathbf j} \rangle \neq 0$. Finally,  it is easy to see that
$\langle e_n v_{\mathbf i} ,v_{\mathbf j} \rangle =0$ if and only if  $ \langle v_{\mathbf i} , f_nv_{\mathbf j} \rangle =0$. This completes the proof of (2).

In order to verify  (3), it suffices to assume $v=v_{\mathbf i} $, $w=v_{\mathbf j}$ and  $b=T_k$, $\forall \mathbf i, \mathbf j\in I(2n, r)$  and $1 \leq k\leq r-1$.
We assume  $i_l=j_l$, for $l\neq k,k+1$. Otherwise, $\langle v_{\mathbf i} T_k,v_{\mathbf j}\rangle =\langle v_{\mathbf i},v_{\mathbf j} T_k\rangle  =0$.
By Lemma~\ref{basic2}, $\langle  v_{\mathbf i} T_k,v_{\mathbf j} \rangle = \langle v_{\mathbf i} , v_{\mathbf j} T_k\rangle $ if
 $i_k\neq {i}_{k+1}'$.  In the remaining,   we assume $i_k={i}_{k+1}'$. In particular, $i_k\neq i_{k+1}$.
  Write $\delta=q-q^{-1}$.
 If  $i_k>i_{k+1}$, then
\begin{equation} \label{case1} \langle  v_{\mathbf i} T_k,v_{\mathbf j} \rangle=
\begin{cases}q^{-1+\beta(\mathbf j)},  & (j_k,j_{k+1})=(i_{k+1},i_k),\\
-\delta q^{\rho_{j_{k+1}}-\rho_{i_k}+\beta(\mathbf j)}\varepsilon_{j_{k+1}}\varepsilon_{i_k}, & \text{ $j_{k+1}= j_k'>i_k$,}\\
0 , & \text{ otherwise.} \\
\end{cases} \end{equation}
and
\begin{equation} \label{case2} \langle  v_{\mathbf i} ,v_{\mathbf j} T_k\rangle=
\begin{cases}q^{-1+\beta(\mathbf i)},  & (j_k,j_{k+1})=(i_{k+1},i_k),\\
-\delta q^{\rho_{ i_k'}-\rho_{ j_{k+1}'}+\beta(\mathbf i)}\varepsilon_{j_{k+1}'}\varepsilon_{i_k'}, & \text{ $j_{k+1}= j_k'>i_k$,}\\
0 , & \text{ otherwise.} \\
\end{cases} \end{equation}
If  $i_k<i_{k+1}$, by (1),   we can assume $\mathbf i\neq \mathbf j$ without loss of generality. We have
\begin{equation} \label{case3}  \langle  v_{\mathbf i} T_k,v_{\mathbf j} \rangle=
\begin{cases}q^{-1+\beta(\mathbf j)},  & \text{ $(j_k,j_{k+1})=(i_{k+1},i_k)$,} \\
-\delta q^{\rho_{j_{k+1}}-\rho_{i_k}+\beta(\mathbf j )}\varepsilon_{j_{k+1}}\varepsilon_{i_k}, & \text{ $ i_{k+1}' \neq j_{k+1}= j_k'>i_k$, }\\
0 , & \text{ otherwise.}
\end{cases} \end{equation}
and
\begin{equation}\label{case4} \langle  v_{\mathbf i},   v_{\mathbf j} T_k\rangle=
\begin{cases}q^{-1+\beta(\mathbf i)},  & \text{ $(j_k,j_{k+1})=(i_{k+1},i_k)$,} \\
-\delta q^{\rho_{i_k'}-\rho_{ j_{k+1}'}+\beta(\mathbf i)}\varepsilon_{j_{k+1}'}\varepsilon_{ i_k'}, & \text{ $ i_{k+1}' \neq j_{k+1}= j_k'>i_k$, }\\
0 , & \text{ otherwise.}
\end{cases} \end{equation}
So, $\langle  v_{\mathbf i} T_k,v_{\mathbf j} \rangle=0$ if and only if  $\langle v_{\mathbf i} ,v_{\mathbf j} T_k\rangle =0 $. Further, if  $\langle v_{\mathbf i} ,v_{\mathbf j} T_k\rangle \neq 0 $, then
 $ \beta(\mathbf i )=\beta(\mathbf j)$ and hence  $\langle  v_{\mathbf i} T_k,v_{\mathbf j} \rangle=\langle v_{\mathbf i} ,v_{\mathbf j} T_k\rangle $ by  \eqref{case1}--\eqref{case4}, \eqref{rho} and the definition of $\varepsilon_i$ in Corollary~\ref{1dim}.
\end{proof}

\begin{coro}\label{cond} Suppose $\mathfrak g\in \{\mathfrak{so}_{2n}, \mathfrak{sp}_{2n}\}$.  As $(\mathbf U_{\kappa }(\mathfrak g),  \mathscr B_r(\varrho, q))$-bimodules,
$V^{\otimes r}\cong (V^{\otimes r})^\circ$, where $\varrho$ is given in \eqref{varrho}.
\end{coro}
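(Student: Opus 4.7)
The plan is to package the invariant bilinear form of Lemma~\ref{contra} as an explicit bimodule isomorphism, following the same template that produced Corollary~\ref{duforb} in the $\mathfrak{so}_{2n+1}$ case. Concretely, I would define
\[
\Phi\map{V^{\otimes r}}{(V^{\otimes r})^\circ},\qquad \Phi(v)(w)=\langle v,w\rangle,
\]
where $\langle\ ,\ \rangle$ is the symmetric form of Lemma~\ref{contra}. Unpacking the definitions \eqref{contramod} and \eqref{contramod1} of the actions on $(V^{\otimes r})^\circ$, the two invariance identities
\[
\langle uv,w\rangle=\langle v,\tau(u)w\rangle,\qquad \langle vb,w\rangle=\langle v,w\sigma(b)\rangle
\]
from Lemma~\ref{contra}(2)--(3) translate directly into $\Phi(uv)(w)=(u\Phi(v))(w)$ and $\Phi(vb)(w)=(\Phi(v)b)(w)$, holding for all $w$. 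So $\Phi$ is automatically a $(\mathbf U_\kappa(\mathfrak g),\mathscr B_r(\varrho,q))$-bimodule homomorphism; no further calculation with $e_i$, $f_i$ or $T_i$ is needed at this stage, since all of it was absorbed into the proof of Lemma~\ref{contra}.

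It remains to check that $\Phi$ is a $\kappa$-linear isomorphism. This is immediate from Lemma~\ref{contra}(1): non-degeneracy of $\langle\ ,\ \rangle$ forces $\Phi$ to be injective, and because $V^{\otimes r}$ is finite-dimensional with $\dim_\kappa V^{\otimes r}=\dim_\kappa (V^{\otimes r})^\circ$, injectivity yields bijectivity. I do not anticipate any real obstacle here; the substantive content lives in Lemma~\ref{contra}, and the corollary is simply the categorical reformulation of that lemma, exactly parallel to the way Corollary~\ref{duforb} is extracted from Lemma~\ref{bi} in the $\mathfrak{so}_{2n+1}$ case.
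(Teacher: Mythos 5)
Your proposal is correct and matches the paper's proof: the paper likewise defines the map $x\mapsto x^\circ$ with $x^\circ(y)=\langle x,y\rangle$ via \eqref{duall} and cites Lemma~\ref{contra} for both the intertwining properties and the non-degeneracy that gives bijectivity. The unpacking of \eqref{contramod} and \eqref{contramod1} that you spell out is exactly what the paper leaves implicit.
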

\begin{proof} Let $\circ: V^{\otimes r}\rightarrow(V^{\otimes r})^\circ$ be $\kappa$-linear map such that
\begin{equation} \label{duall} x^{\circ} (y)=\langle x , y\rangle,  \ \forall x,y\in V^{\otimes r},\end{equation}  where  $ \langle \ ,\ \rangle $ is  given in \eqref{mf1}. By  Lemma~\ref{contra}, $\circ$ is the  required isomorphism.
\end{proof}

\section{ Representations of Birman-Murakami-Wenzl algebras }
In this section, we assume that  $\mathscr B_{r}(\varrho, q)$ is defined over $\kappa$, where $\kappa$ is a field containing non-zero $\varrho$ and $q$ such that $q^2\neq 1$.  The aim of  this section is to  establish a relationship between decomposition numbers of $\mathscr B_{r}(\varrho, q)$  and the multiplicities of Weyl modules in certain indecomposable tilting modules for $\mathbf U_\kappa(\mathfrak g)$  over  $\kappa$, where  $\varrho$
is given in \eqref{varrho} and $\mathfrak g\in \{\mathfrak {so}_{2n+1}, \mathfrak {so}_{2n}, \mathfrak {sp}_{2n}\}$.
We start by recalling some of combinatorics.

Recall that a composition  $\lambda$  of $r$ with at most $n$ parts  is a sequence of non-negative integers $(\lambda_1, \lambda_2, \cdots, \lambda_n ) $ such that $\sum_{i=1}^n \lambda_i=r$. If  $\lambda_i\ge \lambda_{i+1}$ for all possible $i$'s, then $\lambda$ is called a partition. Let $\lambda'$ be the conjugate of $\lambda$.
Then $\lambda'_k=\sharp \{j\mid \lambda_j\ge k\}$.  Let  $\Lambda(n, r)$ (resp., $\Lambda^+(n, r)$)  be the set of all compositions (resp., partitions)  of $r$
with at most $n$ parts. We also use $\Lambda^+(r)$ to denote the set of all partitions of $r$.
For  any $\lambda\in \Lambda^+(r)$,  let $[\lambda]$ be the Young diagram which  is a
collection of boxes (or nodes) arranged in left-justified rows with $\lambda_i$
boxes in the $i$th row of $[\lambda]$.
A $\lambda$-tableau $\s$ is obtained by inserting $i, 1\le i\le
r$ into $[\lambda]$ without repetition.  A $\lambda$-tableau $\s$ is standard if the entries in $\s$ are
increasing both from left to right in each row and from top to the
bottom in each column. Let $\Std(\lambda)$ be the set of all
standard $\lambda$-tableaux.  The symmetric group
$\mathfrak S_r$ acts on $\s$ by permuting its entries.
 Let $\t^\lambda$ (resp.,
$\t_{\lambda}$) be the $\lambda$-tableau obtained from the Young
diagram $[\lambda]$ by adding $1, 2, \cdots, n$ from left to right
along the rows (resp., from top to bottom down the columns). For
example, if $\lambda=(4,3,1)$, then
\begin{equation}\label{tla}
\t^{\lambda}=\young(1234,567,8), \quad \text{ and }
\t_{\lambda}=\young(1468,257,3).\end{equation}
Write  $w=d(\s)$ if $\t^\lambda w=\s$. Then  $d(\s)$ is
uniquely determined by $\s$. In particular, we denote  $d(\t_\lambda)$ by $w_\lambda$.

Let $\mathscr H_r$ be the Hecke algebra associated to the symmetric group $\mathfrak S_r$.  By definition, $\mathscr H_r$ is a unital  associative  $\mathbb Z[q, q^{-1}]$-algebra generated by $g_i, 1\le i\le {r-1}$ satisfying  relations
\begin{enumerate} \item
$(g_i-q)(g_i+q^{-1})=0$, $1\le i\le r-1$,
\item $g_{i}g_{i+1}g_i=g_{i+1}g_ig_{i+1}$,  $1\le i\le r-2$,
\item $ g_ig_j=g_jg_i$,  $|i-j|>1$.
\end{enumerate}
Let  $I$ be the two-sided ideal of $\mathscr B_{r}(\varrho, q)$ generated by $E_1$. By Definition~\ref{bmw-def},  \begin{equation}\label{heckeb} \mathscr H_r\cong \mathscr B_{r}(\varrho, q)/I.\end{equation}
For any $w\in \mathfrak S_r$, write $g_w=g_{i_1}g_{i_2}\cdots
g_{i_k}$ if $s_{i_1}\cdots s_{i_k}$ is a reduced expression of
$w$. It is known that $k$,  the length of $w$,  is unique  although a reduced expression of $w$ is not unique in general. For each partition  $\lambda$ of $r$,  let
 \begin{equation} \label{mnla} \m_\lambda=\sum_{w\in \mathfrak S_\lambda}
q^{\ell(w)}g_ w, \text{ and } \n_\lambda=\sum_{w\in \mathfrak S_\lambda}
(-q)^{-\ell(w)}g_ w, \end{equation} where $\mathfrak S_\lambda$ is the  Young subgroup of $\mathfrak S_r$ with respect to $\lambda$. For any $\s, \t\in \Std(\lambda)$,
let $$\m_{\s\t}=g_{d(\s)}^* \m_\lambda g_{d(\t)},\quad   \n_{\s\t}=g_{d(\s)}^* \n_\lambda g_{d(\t)},$$
   where $\ast$ is the anti-involution on $\mathscr H_r$ such that $g_i^*=g_i$, $1\le i\le r-1$.

If  $\lambda \in \Lambda^+(n, r)$, we define
\begin{equation}\label{ilambda}  \mathbf i_\lambda=(1^{\lambda_1} ,2^{\lambda_2} ,\cdots, n^{\lambda_n})\in I(n,r), \end{equation}
where $I(n, r)$ is defined in \eqref{inr}.
The following result is  a special case of
\cite[Theorem~4.13]{RSong1}.

\begin{lemma}\label{hightest of a} Let $V$ be the natural representation of $\mathbf U_\kappa(\mathfrak {sl}_n)$.
For any $\t\in \Std(\lambda' )$ with $\lambda\in \Lambda^+(n, r)$, let $v_{\lambda,\t}=v_{\mathbf i_\lambda}g_{w_\lambda}\n_{\lambda'}g_{d(\t)}$. If $n\ge r$, then
  $\{v_{\lambda,\t}\mid \t\in\Std(\lambda')\} $ is a basis of $\kappa$-space consisting of all highest weight vectors of $V^{\otimes r}$ with  weight $\sum_{i=1}^{n} \lambda_i\epsilon_i-\frac r n\sum_{i=1}^{n} \epsilon_{i}$.
\end{lemma}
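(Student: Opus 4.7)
The plan is to reduce the statement to quantum Schur--Weyl duality for $\mathbf{U}_\kappa(\mathfrak{sl}_n)$ and the Hecke algebra $\mathscr{H}_r$ acting on $V^{\otimes r}$, and then identify the highest weight subspace with the Specht--type cell module whose standard basis is parametrised by $\Std(\lambda')$.

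First I would observe that since $\mathscr{H}_r$ commutes with $\mathbf{U}_\kappa(\mathfrak{sl}_n)$ on $V^{\otimes r}$ (this is the quantum Schur--Weyl duality of Jimbo, valid for $n\ge r$), each weight space $V^{\otimes r}_\mu$ is stable under the right $\mathscr{H}_r$--action, and so is the highest weight subspace $H_\lambda=\{v\in V^{\otimes r}_\mu\mid e_iv=0,\ 1\le i\le n-1\}$ for $\mu=\sum \lambda_i\epsilon_i-\frac{r}{n}\sum\epsilon_i$. The Schur--Weyl decomposition then identifies $H_\lambda$ as a right $\mathscr{H}_r$--module of dimension equal to the multiplicity of the Weyl module $\Delta(\lambda)$ in $V^{\otimes r}$, which equals $\dim S^\lambda=|\Std(\lambda)|=|\Std(\lambda')|$.

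Next I would show that each $v_{\lambda,\t}$ actually lies in $H_\lambda$. The weight statement is immediate since $v_{\mathbf{i}_\lambda}$ has weight $\mu$ and, by the $R$--matrix formulae in Lemma~\ref{basic2} (or Corollary~\ref{fact}), the right $\mathscr{H}_r$--action only mixes monomials $v_{\mathbf{j}}$ whose multisets of letters coincide with that of $v_{\mathbf{i}_\lambda}$, hence with the same $\mathfrak{sl}_n$--weight. To see that $e_iv_{\lambda,\t}=0$, I would interpret $v_{\mathbf{i}_\lambda}g_{w_\lambda}$ as the column--reading filling of the Young diagram $[\lambda]$ whose $j$--th column carries the letters $1,2,\ldots,\lambda'_j$, and then use that $\n_{\lambda'}$ antisymmetrises over the column stabiliser $\mathfrak{S}_{\lambda'}$. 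Since applying $e_i$ changes exactly one letter $i+1$ into $i$ within one column, the resulting monomials each contain a repeated entry in some column and are killed by $\n_{\lambda'}$.

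For linear independence and spanning, I would use the injectivity of the map $x\mapsto v_{\mathbf{i}_\lambda}g_{w_\lambda}x$ from the $\n_{\lambda'}$--cell of $\mathscr{H}_r$ into $V^{\otimes r}$: the images $\{v_{\mathbf{i}_\lambda}g_{w_\lambda}\n_{\lambda'}g_{d(\t)}\mid \t\in\Std(\lambda')\}$ correspond to Murphy's standard basis $\{\n_{\s\t}\}$ of the dual Specht module $\widetilde{S}^{\lambda'}$, which has dimension $|\Std(\lambda')|$. Since these images sit inside $H_\lambda$, which already has dimension $|\Std(\lambda')|$ by the first paragraph, a non--vanishing/dimension argument forces the map to be injective and the images to form a basis of $H_\lambda$. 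Alternatively, as the excerpt notes, one may simply invoke \cite[Theorem~4.13]{RSong1}.

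The main technical obstacle is the injectivity in the last step: one has to rule out that any non--trivial $\mathscr{H}_r$--linear combination of the $g_{w_\lambda}\n_{\lambda'}g_{d(\t)}$ annihilates $v_{\mathbf{i}_\lambda}$. The hypothesis $n\ge r$ is crucial here, since it guarantees that $V^{\otimes r}$ sees every dominant weight of the form $\lambda\in\Lambda^+(n,r)$ with full multiplicity, so no collapse occurs; below this bound one would lose some of the $v_{\lambda,\t}$.
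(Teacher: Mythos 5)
The paper itself offers no proof of this lemma: it is quoted as a special case of \cite[Theorem~4.13]{RSong1}, so your attempt is a reconstruction rather than a variant of anything in the text. Your overall strategy --- compute $\dim H_\lambda=\dim\Hom_{\mathbf U_\kappa(\mathfrak{sl}_n)}(\Delta(\lambda),V^{\otimes r})=|\Std(\lambda')|$, check that each $v_{\lambda,\t}$ is a highest weight vector via the column-antisymmetrising property of $\n_{\lambda'}$, and then match dimensions --- has the right shape, and the first two steps are essentially sound. (One caveat on step one: the phrase ``Schur--Weyl decomposition'' hides the fact that $V^{\otimes r}$ need not be semisimple here; the correct justification is that $\dim\Hom(\Delta(\lambda),V^{\otimes r})$ is independent of $\kappa$ because $V^{\otimes r}$ is a tilting module, which is exactly the device the paper uses in the proof of Proposition~\ref{bmwab}.)

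The genuine gap is linear independence. Having $|\Std(\lambda')|$ vectors inside a space of dimension $|\Std(\lambda')|$ does not make them a basis, so your ``non-vanishing/dimension argument forces the map to be injective'' is circular. The $\mathscr H_r$-homomorphism from the cell module spanned by the $\n_{\lambda'}g_{d(\t)}$ (modulo lower cell terms) into $V^{\otimes r}$, sending $\n_{\lambda'}g_{d(\t)}$ to $v_{\lambda,\t}$, need not be injective merely because it is nonzero: in the non-semisimple case cell modules have proper submodules, so a nonzero map can have a kernel, and this is precisely the ``collapse'' you need to exclude. Saying that $n\ge r$ ``guarantees no collapse'' asserts the conclusion rather than proving it ($n\ge r$ enters in guaranteeing that every $\lambda\in\Lambda^+(n,r)$ actually occurs and that the relevant combinatorics of $I(n,r)$ is available, not in ruling out radical vectors). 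What is actually required, and what \cite{RSong1} supplies, is a triangularity argument: with respect to a suitable order on $I(n,r)$, the expansion of $v_{\lambda,\t}$ in the monomial basis $\{v_{\mathbf j}\}$ has a distinguished leading term $v_{\mathbf j_\t}$ whose coefficient is a power of $\pm q$ (hence nonzero over any field), and $\t\mapsto \mathbf j_\t$ is injective on $\Std(\lambda')$. This leading-term step is where the field-independent content of the lemma lies; without it your argument establishes only that the $v_{\lambda,\t}$ lie in $H_\lambda$, not that they form a basis.
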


 Let $\Lambda_r=\{(f, \lambda)\mid 0\le f\le \lfloor r/2\rfloor, \lambda\in \Lambda^+(r-2f)\}$.
  For any non-negative  integer $f\le \lfloor r/2\rfloor$, let $\mathscr B_{r-2f}(\varrho, q)$ (resp., $\mathscr H_{r-2f}$) be generated by $T_i$ and $E_i$ (resp., $g_i$), $2f+1\le i\le r-1$.
  In Theorem~\ref{cell12},  $\n_{\s\t}$ is the element in $ \mathscr B_{r-2f}(\varrho, q)$, which is obtained from that of $\mathscr H_{r-2f}$
 by using $T_w$ instead of $g_w$.

\begin{theorem}\cite{Enyang}\label{cell12}  Let $\mathscr B_r(\varrho, q)$ be the Birman-Murakami-Wenzl algebra over a commutative ring  $R$ containing $1$ and invertible elements $\varrho$, $q$ and  $q-q^{-1}$. Let
 $$S=\left\{T^*_{d_1}E^f \n_{\s\t}T_{d_2}\mid (f, \lambda)\in \Lambda_r, \s,\t\in\Std(\lambda),
 d_1,d_2\in\mathscr D_f\right\},$$ where $E^f=E_1E_3\cdots E_{2f-1}$ for $f>0$ and $E^0=1$.
 \begin{enumerate}\item  $S$ is a   cellular basis of $ \mathscr B_r(\varrho,q)$ over $R$ in the sense of \cite{GL},
\item $\gamma (S)$ is another cellular basis of $ \mathscr B_r(\varrho,q)$ over $R$, where $\gamma$ is the automorphism of $\mathscr B_r(\varrho, q)$ defined in Lemma~\ref{anti}.\end{enumerate}
\end{theorem}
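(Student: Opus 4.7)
The plan is to deduce the theorem from Enyang's basis result (Theorem \ref{basis}) combined with Murphy's cellular basis of the Hecke algebra $\mathscr H_{r-2f}$. First, I would show that $S$ is an $R$-basis by observing that the subalgebra of $\mathscr B_r(\varrho, q)$ generated by $T_{2f+1}, \dots, T_{r-1}$ is isomorphic to $\mathscr H_{r-2f}$ (the defining quadratic, braid and commutation relations hold in $\mathscr B_r$), and that under this identification the family $\{T_w : w \in \mathfrak S_{r-2f}\}$ has the same $R$-span as $\{\n_{\s\t} : \lambda \vdash r-2f,\ \s, \t \in \Std(\lambda)\}$ by Murphy's theorem. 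Substituting into Enyang's basis $S_1$ from Theorem \ref{basis} produces $S$ as a spanning set of the same cardinality as $S_1$, hence as an $R$-basis.

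Next I would verify the cellular algebra axioms of Graham--Lehrer, using: the poset $\Lambda_r$ ordered by $(f,\lambda) \rhd (f', \mu)$ iff $f > f'$, or $f = f'$ and $\lambda$ strictly dominates $\mu$; the index set $M(f,\lambda) = \mathscr D_f \times \Std(\lambda)$; the anti-involution $\sigma$ from Lemma \ref{anti}(1); and the basis elements $C^{(f,\lambda)}_{(d_1, \s),\, (d_2, \t)} = T_{d_1}^* E^f \n_{\s\t} T_{d_2}$. The involution axiom follows since $\sigma(T_{d_i}) = T_{d_i}^*$, $\sigma(\n_{\s\t}) = \n_{\t\s}$ (inherited from Murphy's basis of $\mathscr H_{r-2f}$), and $\sigma(E^f) = E^f$, because $E_1, E_3, \dots, E_{2f-1}$ mutually commute and are individually $\sigma$-fixed; moreover $E^f$ commutes with $\n_{\t\s}$ since the latter involves only $T_j$ with $j \geq 2f+1$, so $\sigma(C^{(f,\lambda)}_{(d_1,\s),(d_2,\t)}) = C^{(f,\lambda)}_{(d_2,\t),(d_1,\s)}$.

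The main obstacle is the straightening axiom: for every $a \in \mathscr B_r(\varrho, q)$, the product $a \cdot C^{(f,\lambda)}_{(d_1,\s),(d_2,\t)}$ must be congruent, modulo the span of strictly higher cells, to $\sum_{(d_1', \s')} r_a((d_1',\s'),(d_1,\s))\, C^{(f,\lambda)}_{(d_1',\s'),(d_2,\t)}$ with scalars independent of $(d_2, \t)$. I would first check that $\mathscr B_r(\varrho, q)^{f+1}$ lies in the span of cells strictly higher than any $(f,\lambda)$, so that higher-cell terms and terms in deeper ideals are interchangeable. Then, inside the stratum of depth $f$, I would push $a$ through $T_{d_1}^*$ and $E^f$, using Lemma \ref{dd} and Enyang's rewriting rules for $T_i E^f$ to express the outcome in terms of the coset representatives $\mathscr D_f$, with error terms landing in $\mathscr B_r(\varrho, q)^{f+1}$. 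The residual action on $\n_{\s\t}$ is by an element of $\mathscr H_{r-2f}$ and straightens correctly by Murphy's cellularity of the Hecke algebra. The induction runs on $\ell(d_1)$ together with the dominance order on $\lambda$.

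For part (2), $\gamma$ is an algebra automorphism commuting with $\sigma$ (both fix or symmetrically permute the generators $T_i, E_i$), and $\gamma(E^f) = E_{r-1} E_{r-3} \cdots E_{r-2f+1}$ generates the same two-sided ideal as $E^f$, so the filtration $\{\mathscr B_r(\varrho,q)^f\}$ is $\gamma$-stable. Applying $\gamma$ pointwise transports the cellular datum of (1) onto a new cellular datum with the same poset $\Lambda_r$, the same anti-involution $\sigma$, and basis $\gamma(S)$: the involution axiom transfers via $\sigma\gamma = \gamma\sigma$, and the straightening axiom via $a \cdot \gamma(x) = \gamma(\gamma^{-1}(a)\cdot x)$ together with the $\gamma$-stability of the filtration.
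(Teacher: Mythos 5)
The paper does not actually prove this theorem: it is quoted from Enyang's paper, with only the remark that Enyang's version uses indexed rather than signed representations of the Hecke algebra. Your sketch is essentially a reconstruction of Enyang's own argument (substitute Murphy's cellular basis of $\mathscr H_{r-2f}$ into the basis of Theorem~\ref{basis}, then verify the Graham--Lehrer axioms using the filtration by the ideals $\mathscr B_r(\varrho,q)^f$), so it takes the intended route rather than a genuinely different one.

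One claim in your first step is false as written: the subalgebra of $\mathscr B_r(\varrho,q)$ generated by $T_{2f+1},\dots,T_{r-1}$ is \emph{not} isomorphic to $\mathscr H_{r-2f}$, because in the BMW algebra the generators satisfy the cubic relation $(T_i-q)(T_i+q^{-1})(T_i-\varrho^{-1})=0$ rather than the Hecke quadratic relation; in fact $(T_i-q)(T_i+q^{-1})=-(q-q^{-1})\varrho^{-1}E_i\neq 0$. What you actually need, and what does hold, is two weaker facts. For the basis claim: $\{\n_{\s\t}\}$ is obtained from $\{T_w\mid w\in\mathfrak S_{r-2f}\}$ by the same transition matrix by which Murphy's basis is obtained from $\{g_w\}$ in $\mathscr H_{r-2f}$, and that matrix is invertible over $R$; this is pure linear algebra and requires no algebra isomorphism. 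For the straightening step: modulo $\mathscr B_r(\varrho,q)^{f+1}$ the elements $E^fT_j$ with $j\ge 2f+1$ do satisfy the Hecke relations, because $E^f(T_j-q)(T_j+q^{-1})=-(q-q^{-1})\varrho^{-1}E^fE_j\in\mathscr B_r(\varrho,q)^{f+1}$; this is exactly the content of Enyang's Corollary~3.4, which the paper itself invokes in the proof of Proposition~\ref{bmwab}. With this repair your outline is sound; the $\sigma$-symmetry computation (using that $E^f$ is $\sigma$-fixed and commutes with $\n_{\s\t}$) and the transport of the cellular datum by $\gamma$ in part (2) (using $\sigma\gamma=\gamma\sigma$ and the $\gamma$-stability of the filtration) are correct as you state them.
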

In fact,  Theorem~\ref{cell12} has been given in \cite{Enyang}  if one uses indexed representations instead of signed representations for Hecke algebras.   By standard results on the representation theory on cellular algebras in \cite{GL}, for each pair  $(f, \lambda)\in \Lambda_r$,  we have  right cell modules  $C(f, \lambda)$ (resp., $\tilde C(f, \lambda)$) of $\mathscr B_{r}(\varrho, q)$   with respect to the cellular bases  of   $\mathscr B_{r}(\varrho, q)$  in
Theorem~\ref{cell12}(1) (resp., (2)). Further,   there is an invariant form $\phi_{f, \lambda}$  on  $C(f, \lambda)$ (resp., $\tilde C(f, \lambda)$).
 Let $\text{rad} \phi_{f, \lambda}$ be the radical with respect to the invariant form on $C(f, \lambda)$ (resp., $\tilde C(f, \lambda)$). The corresponding quotient $ C(f, \lambda)/\text{rad}\phi_{f, \lambda}$ (resp.,
  $ \tilde C(f, \lambda)/\text{rad}\phi_{f, \lambda}$) will be denoted by $D^{f, \lambda}$ (resp., $ \tilde D^{f, \lambda}$).

 Recall that $e$ is the order of $q^2$. A partition $\lambda$ of $r$ is called $e$-restricted if  $\lambda_i-\lambda_{i+1}<e$ for all possible $i$. If $\lambda'$ is $e$-restricted, then $\lambda$ is called $e$-regular.
  It is proved in \cite{Xi} that $D^{f, \lambda}\neq 0$ if and only if $\lambda$ is $e$-restricted and $f\neq r/2$ if $r$ is even and $\varrho^2=1$. By Theorem~\ref{cell12}(2), similar result holds for  $ \tilde D^{f, \lambda}$.
   Let $P(f, \lambda)$ (resp., $\tilde P(f, \lambda)$) be the projective cover of $D^{f, \lambda}$ (resp., $ \tilde D^{f, \lambda}$).

   The multiplicities of simple $\mathscr B_r(\varrho, q)$-modules $D^{f, \lambda}$ in cell modules  $C(\ell, \mu)$ will be called decomposition numbers of $\mathscr B_r(\varrho, q)$ if $\varrho\neq q^{2n}$ for some $n\in \mathbb N$. When $\varrho= q^{2n}$, we use $\tilde C(\ell, \mu$) and   $ \tilde D^{f, \lambda}$ instead of $C(\ell, \mu)$ and $D^{f, \lambda}$ to define decomposition numbers of $\mathscr B_r(\varrho, q)$.
For any $(f, \lambda)\in \Lambda_r$, define  \begin{equation} \label{hwv12} v_{\lambda}=\underset {2f} {\underbrace{ v_1\otimes v_{{1'}}\otimes \cdots \otimes v_1\otimes v_{{1'}}}} \otimes v_{\mathbf i_\lambda}.\end{equation} In Proposition~\ref{bmwab}, we use $T_w\in\mathscr B_{r}(\varrho, q) $ instead of $g_w\in \mathscr H_r$  in  \eqref{mnla} so as to get corresponding $\m_\lambda$ and $\n_\lambda$ in $\mathscr B_{r}(\varrho, q)$, where $\varrho$ is given in \eqref{varrho}.

\begin{proposition} \label{bmwab} Let $V$ be  the natural representation of the quantum group  $\mathbf U_\kappa(\mathfrak g) $ associated   with $\mathfrak g\in \{\mathfrak {so}_{2n+1}, \mathfrak {sp}_{2n}, \mathfrak {so}_{2n}\}$.
For any $d\in\mathscr D_f$ and $\t\in\Std(\lambda')$ with $(f, \lambda)\in \Lambda_r$,      define $$v_{\lambda,\t, d}=v_\lambda E^f T_{w_\lambda}\n_{\lambda'}T_{d(\t)}T_{d}\in V^{\otimes r}.$$
If $\mathfrak g=\mathfrak{sp}_{2n}$ with $n\ge r$ or $\mathfrak g\in \{\mathfrak{so}_{2n}, \mathfrak {so}_{2n+1}\}$ with $n>r$,
 then  \begin{enumerate} \item the set $\{v_{\lambda,\t,d}\mid \t\in\Std(\lambda'), d\in\mathscr D_f\} $ is a basis of $\kappa$-space consisting of all highest weight vectors of $V^{\otimes r}$ with  weight $\sum_{i=1}^n \lambda_i\epsilon_i$;\item If $v\in V^{\otimes r}$ is a  highest weight vector with weight  $\lambda=\sum_{i=1}^n \lambda_i\epsilon_i$, then $\lambda$ is a partition of
 $r-2f$ for some non-negative integer $f$ such that $(f, \lambda)\in \Lambda_r$.
   \end{enumerate}
\end{proposition}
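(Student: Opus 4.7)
The plan is to verify the highest weight vector property first, then compute $\dim V^{\otimes r}_{\lambda,+}$ using tilting-module theory and Schur--Weyl duality, and finally use the cellular structure of $\mathscr B_r(\varrho,q)$ to obtain linear independence. First, iteratively applying the formula for $E$ in Lemma~\ref{basic2}(3) to the consecutive pairs $v_1\otimes v_{1'}$ in $v_\lambda$, each step produces $q^{-\rho_1}\varepsilon_1$ times the invariant $\alpha$ of Corollary~\ref{1dim}, so $v_\lambda E^f=c\,\alpha^{\otimes f}\otimes v_{\mathbf i_\lambda}$ for a nonzero scalar $c$. Since $\alpha^{\otimes f}$ is $\mathbf U_\kappa(\mfg)$-invariant, the $\mathbf U_\kappa(\mfg)$-action on $\alpha^{\otimes f}\otimes X$ reduces to its action on $X$ alone. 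As $\mathbf i_\lambda\in I(n,r-2f)$, both $v_{\mathbf i_\lambda}$ and its images under $\mathscr B_{r-2f}(\varrho,q)$ live in $(V^+)^{\otimes(r-2f)}$ with $V^+=\bigoplus_{i=1}^n\kappa v_i$; on this subspace formula~\eqref{form} shows that the BMW action coincides with the Hecke action, since the exceptional case $i_k=i_{k+1}'$ cannot occur when all indices are at most $n$. Applying Lemma~\ref{hightest of a} to the $\mathbf U_\kappa(\mathfrak{sl}_n)$-subalgebra then gives that $u_{\lambda,\t}:=v_{\mathbf i_\lambda}T_{w_\lambda}\n_{\lambda'}T_{d(\t)}$ is a highest weight vector of weight $\sum_i\lambda_i\epsilon_i$ for that subalgebra; $e_nu_{\lambda,\t}=0$ follows from Lemma~\ref{basicl} because $e_n$ annihilates every $v_j$ with $j\le n$. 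Finally $T_d$ commutes with $\mathbf U_\kappa(\mfg)$ via $\varphi$, so $v_{\lambda,\t,d}=c(\alpha^{\otimes f}\otimes u_{\lambda,\t})T_d$ is also a highest weight vector of weight $\sum_i\lambda_i\epsilon_i$.

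Next, since $V\cong V^*$ (Corollaries~\ref{duforb},~\ref{cond}) and $V=\Delta(\epsilon_1)$, $V^{\otimes r}$ is tilting, so $\dim V^{\otimes r}_{\lambda,+}=\dim\Hom_{\mathbf U_\kappa(\mfg)}(\Delta(\lambda),V^{\otimes r})$ equals the Weyl-filtration multiplicity $(V^{\otimes r}:\Delta(\lambda))$ and is independent of $\kappa$ by Lemma~5.1 of~\cite{AGZ}. Over $\kappa=\mathbb Q(v)$ where $V^{\otimes r}$ is semisimple, Theorem~\ref{isom} together with the cellular classification of simple $\mathscr B_r(\varrho,q)$-modules from Theorem~\ref{cell12} yields a bijection between $\Lambda_r$ and the set of $\mathbf U_\kappa(\mfg)$-isotypic components of $V^{\otimes r}$, with $(V^{\otimes r}:\Delta(\lambda))=\dim C(f,\lambda)=|\mathscr D_f|\,|\Std(\lambda)|$ for $(f,\lambda)\in\Lambda_r$ and $0$ otherwise. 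Transporting this back to arbitrary $\kappa$, $\dim V^{\otimes r}_{\lambda,+}=|\mathscr D_f|\,|\Std(\lambda)|$, matching the number of pairs $(\t,d)$. For linear independence I would identify the right $\mathscr B_r(\varrho,q)$-submodule of $V^{\otimes r}_{\lambda,+}$ generated by $v_\lambda E^f T_{w_\lambda}\n_{\lambda'}$; by Theorem~\ref{cell12} and standard cellular machinery, this submodule is a quotient of the cell module $C(f,\lambda)$ (or $\tilde C(f,\lambda)$), with its cellular basis mapping to $\{v_{\lambda,\t,d}\}$, and equality of dimensions forces the quotient to be an isomorphism.

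For part~(2), a nonzero highest weight vector of weight $\lambda=\sum\lambda_i\epsilon_i$ in $V^{\otimes r}$ forces $V^{\otimes r}_{\lambda,+}\ne 0$, so by the dimension count above $(f,\lambda)\in\Lambda_r$ with $f=(r-|\lambda|)/2$, making $\lambda$ a partition of $r-2f$; the hypothesis $n\ge r$ (resp.\ $n>r$) ensures $\ell(\lambda)\le r<n$, so $\lambda_n=0$ and no spin-like negative-last-coordinate weight can arise in the $\mathfrak{so}_{2n}$ case. The delicate step I expect to be hardest is the cellular-algebra identification of $V^{\otimes r}_{\lambda,+}$ with a quotient of the cell module over an arbitrary field: once this is set up the tilting-module dimension count fits everything together, but without it one would have to analyse explicit leading terms of $v_{\lambda,\t,d}$ in the standard basis $\{v_\mathbf j\}$, which is considerably more laborious.
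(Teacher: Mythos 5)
Your verification that $v_{\lambda,\t,d}$ is a highest weight vector and your computation of $\dim\Hom_{\mathbf U_\kappa(\mathfrak g)}(\Delta(\lambda),V^{\otimes r})$ follow essentially the paper's route: the paper likewise splits the coproduct sum over the first $2f$ and the last $r-2f$ tensor slots (using Corollary~\ref{1dim}, \eqref{ei} and Lemma~\ref{hightest of a}), and obtains the dimension count from the tilting property of $V^{\otimes r}$, \cite[Lemma~5.1]{AGZ} and base change to the generic field, where the multiplicities are counted by up-down tableaux as in \cite[(5.5)]{LR}. Those parts of your argument are sound.

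The gap is in your linear-independence step. You produce a surjective $\mathscr B_r(\varrho,q)$-homomorphism from the cell module $C(f,\lambda')$ onto the submodule $N=v_\lambda E^fT_{w_\lambda}\n_{\lambda'}\mathscr B_r(\varrho,q)\subseteq V^{\otimes r}_{\lambda,+}$, and you know $\dim V^{\otimes r}_{\lambda,+}=\dim C(f,\lambda')$. But these two facts only yield $\dim N\le\dim C(f,\lambda')$, which is automatic from the surjection. To conclude ``the quotient is an isomorphism'' you would need either injectivity of the surjection (which \emph{is} the linear independence of $\{v_{\lambda,\t,d}\}$ you are trying to prove) or that $N$ exhausts $V^{\otimes r}_{\lambda,+}$ (which is part (1) again); over a non-semisimple $\kappa$ the cell module can have proper quotients, so neither follows from the dimension count, and the argument is circular. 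The paper closes exactly this point by the ``laborious'' alternative you mention: writing a putative relation as $x=E^f\sum_{d\in\mathscr D_f}z_dT_d$ with $z_d=\sum_{\t}a_\t T_{w_\lambda}\n_{\lambda'}T_{d(\t)}$, it invokes the leading-term/triangularity analysis of Steps 1--2 of \cite[Lemma~5.18]{Hu} (which rests only on Corollary~\ref{fact} and hence transfers to $\mathfrak{so}_{2n}$ and $\mathfrak{so}_{2n+1}$) to show that the nonzero vectors $v_\lambda E^fz_dT_d$ are linearly independent; this forces $v_{\mathbf i_\lambda}z_d=0$ for each fixed $d$, and then Lemma~\ref{hightest of a} gives $a_\t=0$. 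Some version of this explicit analysis in the standard basis $\{v_{\mathbf j}\}$ appears unavoidable, so you should replace the cellular shortcut by it.
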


\begin{proof}
Obviously, both  $v_{\lambda,\t, d}$ and $v_\lambda$ have the same weight  $\sum _{i=1}^n \lambda_i\epsilon_i$.
 By Corollary~\ref{1dim} and \eqref{ei}, $$ \sum_{j=0}^{2f-1}k_i^{\otimes j}\otimes e_i\otimes 1^{\otimes r-j-1} \left(v_1\otimes v_{{1'}}\otimes \cdots \otimes v_1\otimes v_{{1'}}\otimes  v_{\mathbf i_\lambda} E^f T_{w_\lambda} \n_{\lambda'}\right)=0.$$
  Suppose $1\le k\le n$. By Lemma~\ref{basicl},  $e_i$   acts on $v_k$ via the corresponding formulae
for $\mathbf U_\kappa(\mathfrak{sl}_n)$ if $i\neq n$.  Moreover $e_n v_k=0$. By \eqref{ei},  $ v_\lambda   h=0$ for any  $h\in \mathscr B_{r}(\varrho, q)^{f+1}$. Via \cite[Corollary~3.4]{Enyang}, one can consider $T_{w_\lambda} \n_{\lambda'}$ in $v_\lambda  E^f T_{w_\lambda} \n_{\lambda'}$ as the corresponding element in Hecke algebra $\mathscr H_{r-2f}$ generated by $g_i, 2f+1\le i\le r-1$. By
Lemma~\ref{hightest of a},  we have   \begin{equation}\label{key12} \sum_{j=2f}^{r-1}k_i^{\otimes j}\otimes e_i\otimes 1^{\otimes r-j-1} \left(v_1\otimes v_{{1'}}\otimes \cdots \otimes v_1\otimes v_{{1'}}\otimes  v_{\mathbf i_\lambda} E^f T_{w_\lambda} \n_{\lambda'}\right)=0.\end{equation} So, $v_\lambda  E^f T_{w_\lambda} \n_{\lambda'}$  is killed by $e_i$, $1\le i\le n$.
In order to prove (1), it is enough to prove
$\text{ann}(v_\lambda)\bigcap M=0$, where
$$ M=\kappa\text{--span }\{ E^fT_{w_\lambda}\n_{\lambda'}T_{d(\t)}T_{d}\mid \t\in\Std(\lambda'),d\in\mathscr D_f\}.$$
If $x\in\text{ann}(v_\lambda)\cap  M$, then $$x=E^f\sum_{d\in\mathscr D_f}\sum_{\t\in\Std(\lambda')}a_\t T_{w_\lambda}\n_{\lambda'}T_{d(\t)} T_d$$ for some $a_\t\in \kappa$ and $v_\lambda x=0$.
By arguments  similar to those for Steps 1--2 in \cite[Lemma~5.18]{Hu},
$\{ v_\lambda E^f z_d T_d \mid d\in\mathscr D_f, z_d\neq 0\}$
is linearly independent, where $z_d=\sum_{\t\in\Std(\lambda')}a_\t T_{w_\lambda}\n_{\lambda'}T_{d(\t)}$.
\footnote{Although Hu proves the result for $\mathbf U_\kappa(\mathfrak{sp}_{2n})$, his arguments can be used smoothly for both  $\mathbf U_\kappa(\mathfrak {so}_{2n})$ and  $\mathbf U_\kappa(\mathfrak {so}_{2n+1})$. The key point is that Hu's arguments depend on \cite[(5.13)]{Hu} and does not depend on the explicit formulae for $(v_k\otimes v_l) \breve{R}$. So, we can use Corollary~\ref{fact} instead of  \cite[(5.13)]{Hu}.}
 In particular, we have $v_{\mathbf i_\lambda}z_d=0$ for any fixed $d$. By Lemma~\ref{hightest of a},   $a_\t=0$ for all $\t\in \Std(\lambda')$ and hence $x=0$.  So, $\{v_{\lambda,\t,d}\mid \t\in\Std(\lambda'), d\in\mathscr D_f\} $ is $\kappa$-linear independent.

 We identify $\lambda$ with $\sum_{i=1}^n \lambda_i \epsilon_i$.
Let  $\Delta(\lambda)$ be the Weyl module of  $\mathbf U_\kappa(\mathfrak g)$ with highest weight $\lambda$.  Since $V^{\otimes r}$ is a tilting module, by \cite[Lemma~5.1]{AGZ},  the dimension of $\Hom_{\mathbf U_\kappa(\mathfrak g)} (\Delta(\lambda), V^{\otimes r})$ is independent of $\kappa$. So, we can assume $\kappa=\mathbb C(v)$ and $v$ is an indeterminate  when we calculate  the dimension of $\Hom_{\mathbf U_\kappa(\mathfrak g)} (\Delta(\lambda), V^{\otimes r})$. In this case, $V^{\otimes r}$ is completely reducible.  Since we are assume   $\mathfrak g=\mathfrak{sp}_{2n}$ with $n\ge r$ or $\mathfrak g\in \{\mathfrak{so}_{2n}, \mathfrak {so}_{2n+1}\}$ with $n>r$,  the multiplicity of irreducible $\mathbf U_\kappa(\mathfrak g)$-module $L_\lambda$ (which is $\Delta(\lambda)$ in this case)  is equal to the number of so-called up-down tableaux of type $\lambda$ (see, e.g. \cite[(5.5)]{LR}).
 Such a number is equal to the dimension of $C(f, \mu)$ with $\mu\in \{\lambda, \lambda'\}$ (see \cite{Enyang}). Thus, the cardinality of $\{v_{\lambda,\t,d}\mid \t\in\Std(\lambda'), d\in\mathscr D_f\} $ is equal to
the dimension of $\Hom_{\mathbf U_\kappa} (\Delta(\lambda), V^{\otimes r})$.

 Suppose $v\in V^{\otimes r}$ is a highest weight vector  with  weight $\lambda$.
By the universal property of Weyl modules,  there is  an epimorphism  from
$\Delta(\lambda)$ to $\mathbf U_k(\mathfrak g)v$. It gives rise to an $f_v\in \Hom_{\mathbf U_\kappa(\mathfrak g)} (\Delta(\lambda), V^{\otimes r})$ sending the highest weight vector $\mathbf v$ of $\Delta(\lambda)$ to $v$.
In particular, we have $f_{\lambda, \t, d}$ sending $\mathbf v$ to $v_{\lambda,\t,d}$ such that  $\{f_{\lambda,\t,d}\mid \t\in\Std(\lambda'), d\in\mathscr D_f\} $
is a basis of  $\Hom_{\mathbf U_\kappa} (\Delta(\lambda), V^{\otimes r})$. This implies (1).

If there is a highest weight vector $v\in V^{\otimes r}$ with  weight $\mu$, then there is an epimorphism from $\Delta(\mu)$ to $\mathbf U_\kappa v$ and hence $\dim  \Hom_{\mathbf U_\kappa(\mathfrak g)} (\Delta(\mu), V^{\otimes r})\neq 0$. Since  $V^{\otimes r}$ is a tilting module, by \cite[Lemma~5.1]{AGZ}, such a dimension is independent of $\kappa$. So, we assume $\kappa=\mathbb C(v)$. In this case, $V^{\otimes r}$ is completely reducible. By \cite[(5.5)]{LR}, $\mu=\sum_{i=1}^n \mu_i\epsilon_i$ such that $(f, \mu)\in \Lambda_r$ and (2) follows.   \end{proof}

 Abusing of notation, we denote $\sum_{i=1}^n \lambda_i \epsilon_i$ by $\lambda$. In the remaining part of this section, we denote by  $\nabla(\lambda)$  the  co-Weyl module of $\mathbf U_\kappa(\mathfrak g)$ with respect to the   highest weight $\lambda$.

We always keep assumptions that either  $\mathfrak g=\mathfrak{sp}_{2n}$ with $n\ge r$ or $\mathfrak g\in \{\mathfrak{so}_{2n}, \mathfrak {so}_{2n+1}\}$ with $n>r$.
Let $V$ be the natural representation of $\mathbf U_\kappa(\mathfrak g)$. For any $\mathbf U_{\kappa}(\mathfrak{g})$-module $M$, $\Hom_{\mathbf U_{\kappa}(\mathfrak{g})}(M, V^{\otimes r})$   is a right $\mathscr B_{r}(\varrho, q)$-module in the sense \begin{equation} \label{lrm} (\varphi b) (x)= \varphi(x)\alpha (b), \end{equation}
  for all $ x \in M$,  $b\in\mathscr B_r(\varrho, q)$, and $ \varphi\in \Hom_{\mathbf U_{\kappa}(\mathfrak{g})}(M, V^{\otimes r})$ where $\alpha$ is  the automorphism $\gamma$  (resp.,  identity automorphism)  given in Lemma~\ref{anti}(2) if $\mathfrak g=\mathfrak{so}_{2n+1}$ (resp.,  $\mathfrak g\in \{\mathfrak{so}_{2n}, \mathfrak {sp}_{2n}\}$).
   We remark that  $\Hom_{\mathbf U_{\kappa}(\mathfrak{g})} (M, V^{\otimes r})$ can be considered as a left $\mathscr B_r(\varrho, q)$-module such that $x f:=f \sigma(x)$, $\forall x\in \mathscr B_r(\varrho, q)$
and $f\in  \Hom_{\mathbf U_{\kappa}(\mathfrak{g})} (M, V^{\otimes r})$, where $\sigma$ is the anti-involution on $\mathscr B_r(\varrho, q)$ defined in Lemma~\ref{anti}.
  Let $\mathbf U_\kappa(\mathfrak g)$-mod (resp., mod-$\mathscr B_r(\varrho, q)$) be the category of finite dimensional left $\mathbf U_\kappa(\mathfrak g)$-modules (resp., right $\mathscr B_r(\varrho, q)$-modules) over $\kappa$.
Later on, we define  \begin{equation}\label{Fun}  \mathcal F=\Hom_{\mathbf U_{\kappa}(\mathfrak{g})}(-, V^{\otimes r}).\end{equation}

\begin{proposition}\label{cell-weyl} Suppose $(f, \lambda)\in \Lambda_r$. 
\begin{enumerate}
\item If  $\mathfrak g=\mathfrak{sp}_{2n}$ with $n\ge r$, then $\mathcal F(\Delta(\lambda))\cong C(f,\lambda')$ as right $\mathscr B_r(\varrho, q)$-modules.
\item If  $\mathfrak g=\mathfrak{so}_{2n}$ with $n>r$, then $\mathcal F(\Delta(\lambda))\cong C(f,\lambda')$ as right $\mathscr B_r(\varrho, q)$-modules.
\item If  $\mathfrak g=\mathfrak{so}_{2n+1}$ with $n> r$, then $\mathcal F(\Delta(\lambda)) \cong \tilde{C}(f,\lambda')$ as right $\mathscr B_r(\varrho, q)$-modules.
\end{enumerate}
\end{proposition}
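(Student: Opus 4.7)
The plan is to construct an explicit right $\mathscr B_r(\varrho,q)$-module isomorphism using the basis of highest weight vectors from Proposition~\ref{bmwab} together with Enyang's cellular basis. Let $\mathbf v$ denote the chosen highest weight vector of $\Delta(\lambda)$. Since $\mathscr B_r(\varrho,q)$ commutes with $\mathbf U_\kappa(\mathfrak g)$ on $V^{\otimes r}$, the subspace $V^{\otimes r}[\lambda]$ of highest weight vectors of weight $\lambda$ is a right $\mathscr B_r(\varrho,q)$-submodule, and evaluation at $\mathbf v$ gives a $\kappa$-linear bijection $\mathrm{ev}_{\mathbf v} : \mathcal F(\Delta(\lambda)) \xrightarrow{\sim} V^{\otimes r}[\lambda]$. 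A direct verification using \eqref{lrm} shows that this bijection identifies the $\mathscr B_r(\varrho,q)$-action on $\mathcal F(\Delta(\lambda))$ with the intrinsic action on $V^{\otimes r}[\lambda]$ in cases (1)--(2) (where $\alpha=\mathrm{id}$) and with its $\gamma$-twist in case (3).

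Set $v_0 := v_{\lambda, \t^{\lambda'}, 1} = v_\lambda E^f T_{w_\lambda}\n_{\lambda'}$. By Proposition~\ref{bmwab}(1), $V^{\otimes r}[\lambda]$ has $\kappa$-basis $\{v_0 \cdot T_{d(\t)} T_d : \t \in \Std(\lambda'),\ d \in \mathscr D_f\}$ and is therefore cyclic, generated by $v_0$. I would next define a right $\mathscr B_r(\varrho,q)$-module surjection
\begin{equation*}
\Theta : C(f, \lambda') \twoheadrightarrow V^{\otimes r}[\lambda]
\end{equation*}
by sending the image of Enyang's ``diagonal'' cellular element $E^f\n_{\t^{\lambda'}\t^{\lambda'}} = E^f\n_{\lambda'}$ to $v_0$. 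By cellular algebra theory, well-definedness reduces to checking that $v_0$ is annihilated on the right by (i) the two-sided ideal $\mathscr B_r(\varrho,q)^{f+1}$, and (ii) the intersection of $E^f\n_{\lambda'}\mathscr B_r(\varrho,q)$ with the span of cellular basis elements attached to cells strictly greater than $(f,\lambda')$. Both vanishings are essentially contained in the proof of Proposition~\ref{bmwab}: (i) follows from \eqref{ei} and Corollary~\ref{1dim}, and (ii) follows from the linear-independence argument adapted from Steps~1--2 of \cite[Lemma~5.18]{Hu}, which guarantees that no higher-cell contribution to $v_0 \cdot b$ survives. Once $\Theta$ is a well-defined homomorphism, surjectivity is immediate from cyclicity and the dimension equality $\dim_\kappa C(f,\lambda') = |\Std(\lambda')|\cdot|\mathscr D_f| = \dim_\kappa V^{\otimes r}[\lambda]$ forces $\Theta$ to be an isomorphism, giving $\mathcal F(\Delta(\lambda)) \cong C(f,\lambda')$ in cases (1) and (2).

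For case (3), the same $\Theta$ gives $V^{\otimes r}[\lambda] \cong C(f,\lambda')$ as right $\mathscr B_r(\varrho,q)$-modules, while $\mathrm{ev}_{\mathbf v}$ identifies $\mathcal F(\Delta(\lambda))$ with the $\gamma$-twist of $V^{\otimes r}[\lambda]$. Since $\gamma$ sends the cellular basis of Theorem~\ref{cell12}(1) to that of Theorem~\ref{cell12}(2), the $\gamma$-twist of $C(f,\lambda')$ is, by the very definition of the second cellular basis, $\tilde C(f,\lambda')$; hence $\mathcal F(\Delta(\lambda)) \cong \tilde C(f,\lambda')$ as claimed.

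The main obstacle lies in the well-definedness of $\Theta$, i.e.\ in verifying that $v_0$ is killed on the right by the appropriate higher-cell relations. The tools are essentially assembled in the proof of Proposition~\ref{bmwab}, but care is needed to match Enyang's combinatorial conventions with the explicit element $v_0 = v_\lambda E^f T_{w_\lambda}\n_{\lambda'}$; in particular, the factor $T_{w_\lambda}$ reflects our use of the column-antisymmetrizer $\n_{\lambda'}$ rather than the row-symmetrizer $\m_\lambda$, and must be accounted for when comparing the action of $\mathscr B_r(\varrho,q)$ on $v_0$ with the cellular action on $C(f,\lambda')$.
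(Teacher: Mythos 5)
Your argument is correct and follows essentially the same route as the paper: both identify $\mathcal F(\Delta(\lambda))$ with the span of the highest weight vectors $v_{\lambda,\t,d}$ of Proposition~\ref{bmwab} via evaluation at the highest weight vector of $\Delta(\lambda)$, and then match this space against Enyang's cell module through the cyclic generator $E^f(\cdots)T_{w_\lambda}\n_{\lambda'}$ modulo $\mathscr B_r(\varrho,q)^{f+1}$, with the $\tilde C$ in case (3) accounted for by the $\gamma$-twist in \eqref{lrm}. The paper packages the same content as an explicit bijection of bases $f_{\lambda,\t,d}\mapsto E^f\m_\lambda T_{w_\lambda}\n_{\lambda'}T_{d(\t)}T_d+\mathscr B_r(\varrho,q)^{f+1}$, deferring the compatibility check to the same "routine" verifications you isolate.
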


\begin{proof} It is routine to prove that $$C(f, \lambda')\cong E^f \m_\lambda T_{w_\lambda} \n_{\lambda'} \mathscr B_r(\varrho, q) \pmod{  \mathscr B_r(\varrho, q)^{f+1}},$$ where $ \mathscr B_r(\varrho, q)^{f+1}$ is the two-sided ideal of  $\mathscr B_r(\varrho, q) $ generated by $E^{f+1}$. Further, as a $\kappa$-space,  $ E^f \m_\lambda T_{w_\lambda} \n_{\lambda'} \mathscr B_r(\varrho, q) \pmod{  \mathscr B_r(\varrho, q)^{f+1}}$ has a $\kappa$-basis
consisting of $E^f\m_\lambda T_{w_\lambda}\n_{\lambda'}T_{d(\t)}T_d\pmod{\mathscr B_r(\varrho, q)^{f+1}}$,   for all $(\t,d)\in \Std(\lambda')\times\mathscr D_f$. Let
$f_{\lambda,\t,d}\in \mathcal F(\Delta(\lambda))$ sending the highest weight vector of $\Delta(\lambda)$ to $v_{\lambda, \t, d}\in V^{\otimes r}$ in Proposition~\ref{bmwab}. Then   $f_{\lambda,\t,d}$'s are $\kappa$-base elements of $\mathcal F(\Delta(\lambda))$. It is routine to check that the required isomorphism $\Phi$ in (1)  is the  $\kappa$-linear isomorphism  satisfying   $$\Phi(f_{\lambda,\t,d})=E^f\m_\lambda T_{w_\lambda}\n_{\lambda'}T_{d(\t)}T_d+\mathscr B_r(\varrho, q)^{f+1}. $$
Finally, (2)--(3) can be proved similarly. The reason why we use right cell module   $\tilde{C}(f,\lambda')$ in (3) is that we use usual linear  dual in Corollary~\ref{duforb} when $\mathfrak g=\mathfrak {so}_{2n+1}$.
\end{proof}

  For each left $\mathbf U_\kappa(\mathfrak g)$-module $M$, $\Hom_{\mathbf U_\kappa (\mathfrak g)}(V^{\otimes r},M)$
is a left $\mathscr B_r(\varrho, q)$-module such that,   for any $ x\in V^{\otimes r}$, $b\in\mathscr B_r(\varrho, q)$ and $\phi\in\Hom_{\mathbf U_\kappa (\mathfrak g)}(V^{\otimes r},M)$,
\begin{equation}\label{lbrt} (b\phi)(x)=\phi(xb).\end{equation}
Also,  $V^{\otimes r}\otimes_{\mathscr  B_r(\varrho, q)}N$ is a left  $\mathbf U_\kappa (\mathfrak g)$-module for any left $\mathscr B_r(\varrho, q)$-module $N$.
In the following, let  $\mathscr B_r(\varrho, q)\text{-mod}$ be the category of left $\mathscr B_r(\varrho, q)$-modules.

\begin{definition}\label{mfg} Let  $\mathbf f$ and $\mathbf g$ be two functors
$$\begin{aligned}\mathbf f: \mathbf U_\kappa (\mathfrak g)\text{-mod} &\longrightarrow \mathscr B_r(\varrho, q)\text{-mod}\\
M &\longmapsto \Hom_{\mathbf U_\kappa(\mathfrak g)}(V^{\otimes r},M), \\
\mathbf g:\mathscr B_r(\varrho, q) \text{-mod} &\longrightarrow \mathbf U_\kappa (\mathfrak g)\text{-mod}\\
N &\longmapsto V^{\otimes r}\otimes_{\mathscr B_r(\varrho, q)}N.
\end{aligned} $$\end{definition}

It follows from \cite[Theorem~2.11]{JR} that  $\mathbf f$ and $\mathbf g$ are adjoint pairs in the sense that
\begin{equation}\label{ajoint}
\Hom_{\mathbf U_\kappa(\mathfrak g)}(\mathbf g(N),M)\cong \Hom_{\mathscr B_r(\varrho, q)}(N,\mathbf f(M)),
\end{equation}
   as $\kappa$-spaces  where $M$ (resp., $N$) is a left $\mathbf U_\kappa (\mathfrak g)$-module (resp.,  left $\mathscr B_r(\varrho, q)$-module $N$).

\begin{lemma}\label{g} Let  $T$ be  an indecomposable direct summand of the left $\mathbf U_\kappa (\mathfrak g)$-module $V^{\otimes r}$.  Then $\mathbf g\mathbf f(T)\cong T $.
\end{lemma}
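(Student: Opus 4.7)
The plan is to show that the counit of the adjunction \eqref{ajoint}, evaluated at $T$, is an isomorphism. Explicitly, for each $M\in\mathbf U_\kappa(\mathfrak g)$-mod the counit is the natural map
\[
\varepsilon_M: V^{\otimes r} \otimes_{\mathscr B_r(\varrho,q)} \Hom_{\mathbf U_\kappa(\mathfrak g)}(V^{\otimes r}, M) \longrightarrow M,\qquad v\otimes\phi\mapsto\phi(v),
\]
corresponding under \eqref{ajoint} to the identity on $\mathbf f(M)$, and it suffices to check that $\varepsilon_T$ is bijective.

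First I would treat the case $M=V^{\otimes r}$. Under our standing assumption on $n$, Theorem~\ref{isom} gives a $\kappa$-algebra isomorphism $\varphi:\mathscr B_r(\varrho,q)\cong\End_{\mathbf U_\kappa(\mathfrak g)}(V^{\otimes r})=\mathbf f(V^{\otimes r})$. Comparing the left $\mathscr B_r(\varrho,q)$-action on $\mathbf f(V^{\otimes r})$ given by \eqref{lbrt} with the right action on $V^{\otimes r}$ attached to $\varphi$, one checks that $\varphi$ is an isomorphism of left $\mathscr B_r(\varrho,q)$-modules (the target carrying the regular action). Substituting this identification into $\mathbf g\mathbf f(V^{\otimes r})$, the counit becomes the multiplication map $V^{\otimes r}\otimes_{\mathscr B_r(\varrho,q)}\mathscr B_r(\varrho,q)\to V^{\otimes r}$, $v\otimes b\mapsto vb$, which is plainly a $\mathbf U_\kappa(\mathfrak g)$-module isomorphism. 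Hence $\varepsilon_{V^{\otimes r}}$ is an isomorphism.

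Next I would promote this to the indecomposable summand $T$ by naturality and additivity. Write $V^{\otimes r}=T\oplus T'$ and let $e\in\End_{\mathbf U_\kappa(\mathfrak g)}(V^{\otimes r})$ be the idempotent projecting to $T$. Since $\mathbf f$ and $\mathbf g$ are both additive, $\mathbf g\mathbf f$ preserves the decomposition, giving $\mathbf g\mathbf f(V^{\otimes r})=\mathbf g\mathbf f(T)\oplus\mathbf g\mathbf f(T')$, and by the naturality square for $\varepsilon$ with respect to $e$, the counit decomposes as $\varepsilon_{V^{\otimes r}}=\varepsilon_T\oplus\varepsilon_{T'}$. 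As $\varepsilon_{V^{\otimes r}}$ is an isomorphism, each summand $\varepsilon_T$ and $\varepsilon_{T'}$ must also be an isomorphism, and in particular $\mathbf g\mathbf f(T)\cong T$. The only non-formal step, and therefore the main obstacle, is verifying that the Schur--Weyl identification of Theorem~\ref{isom} respects the precise left $\mathscr B_r(\varrho,q)$-module structure \eqref{lbrt} on $\mathbf f(V^{\otimes r})$; this involves carefully tracking the anti-involution $\sigma$ (and, when $\mathfrak g=\mathfrak{so}_{2n+1}$, the automorphism $\gamma$) from Lemma~\ref{anti} that was built into the right action of $\mathscr B_r(\varrho,q)$ on $V^{\otimes r}$. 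Once these conventions are aligned, the rest is a formal consequence of idempotent splitting in an additive category.
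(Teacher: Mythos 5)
Your proposal is correct and follows essentially the same route as the paper: both establish that the counit $v\otimes b\mapsto b(v)$ is an isomorphism at $V^{\otimes r}$ via Theorem~\ref{isom} and then transfer this to the summand $T$ by naturality with respect to the projection (the paper concludes by surjectivity plus a dimension count, you by idempotent splitting, which is an immaterial variation).
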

\begin{proof}By Theorem~\ref{isom},  $ \mathbf f(V^{\otimes r})\cong \mathscr B_r(\varrho, q)$  and hence   $\mathbf  g\mathbf f(V^{\otimes r})\cong V^{\otimes r}$.
The corresponding  isomorphism $\phi$ sends $v\otimes b$ to $b(v)$ for any $v\in V^{\otimes r}$ and $b\in \mathbf f(V^{\otimes r})$. Since $T$ is a direct summand of $V^{\otimes r}$,
the projection  $\pi:  V^{\otimes r}\rightarrow T$ induces a homomorphism $1\otimes \mathbf f(\pi)$ from $\mathbf g\mathbf f(V^{\otimes r})$ to $\mathbf g\mathbf f({T})$
such that $\pi\circ \phi=\tilde \phi\circ ( 1\otimes \mathbf f(\pi))$ where $\tilde \phi$ is the homomorphism from $\mathbf g \mathbf  f(T)$ to $T$ sending $v\otimes h$ to $h(v)$ where $v\in V^{\otimes r}$ and $h\in \mathbf f(T)$. So, $\tilde \phi$ is  surjective. Comparing dimensions yields that $\tilde \phi$ is an isomorphism.
\end{proof}

We remark that any right $\mathscr B_r(\varrho, q)$-module  can be considered as left $\mathscr B_r(\varrho, q)$-module  via the  anti-involution $\sigma$ in Lemma~\ref{anti} and vice versa.
In Theorem~\ref{schur}, let $\omega_0$ be   the longest element of the Weyl group associated to $\mathfrak g$. 
In the remaining part of this paper, let $T(\lambda)$ be the indecomposable (or partial)  tilting module of $\mathbf U_\kappa(\mathfrak g)$  with respect to the highest weight $\lambda$. 
Let $(T(\lambda):\Delta(\mu)) $ be the multiplicity of  the Weyl module $\Delta(\mu)$ in  $T(\lambda)$. This multiplicity is well-defined since it is independent of    Weyl filtrations of $T(\lambda)$.

\begin{theorem}\label{schur} Let $V$ be the natural representation of $\mathbf U_\kappa(\mathfrak g)$ such that
 $\mathfrak g=\mathfrak{sp}_{2n}$   (resp.,  $\mathfrak g\in \{\mathfrak{so}_{2n}, \mathfrak {so}_{2n+1}\}$)  with $n\ge r$ (resp., $n>r$).
  Let $\mathscr B_r(\varrho, q)$ be the Birman-Murakami-Wenzl algebra over $\kappa$, where  $\varrho$ is given in \eqref{varrho}.
\begin{enumerate}\item Any partial tilting module which appears as an indecomposable direct summand of $V^{\otimes r}$ is of
form $T(\lambda)$ for some $(f, \lambda')\in \Lambda_{r}$ with  $\lambda$ being $e$-regular.
\item  Suppose $\mathfrak g=\mathfrak{so}_{2n}, \mathfrak{sp}_{2n}$.  As right $\mathscr B_r(\varrho, q)$-modules,
\begin{itemize} \item [(a)]  $\mathbf f(\nabla(\mu))\cong \mathcal F(\Delta(\mu))\cong C(f, \mu')$ for any  $(\ell, \mu)\in \Lambda_{r}$,
\item [(b)] $\mathbf f(T(\lambda))\cong P(f,\lambda')$  for any $(f, \lambda')\in \Lambda_{r}$ with  $\lambda$ being $e$-regular.
\item [(c)]  $(T( \lambda): \Delta({ \mu}))=[C(\ell, \mu'): D^{f, \lambda'}]$.  \end{itemize}
 \item Suppose   $\mathfrak g=\mathfrak{so}_{2n+1}$.   As right $\mathscr B_r(\varrho, q)$-modules, \begin{itemize} \item [(a)]  $\mathbf f(\nabla(-\omega_0\mu))\cong \mathcal F(\Delta(\mu))\cong \tilde C(f, \mu')$, for $(\ell, \mu)\in \Lambda_{r}$,
\item [(b)] $\mathbf f(T(-\omega_0\lambda))\cong \tilde{P}(f,\lambda')$ for any $(f, \lambda')\in \Lambda_{r}$ with  $\lambda$ being $e$-regular.
\item [(c)] $(T(-\omega_0 \lambda): \Delta({ -\omega_0\mu}))=[\tilde C(\ell, \mu'): \tilde D^{f, \lambda'}]$. \end{itemize}\end{enumerate}
\end{theorem}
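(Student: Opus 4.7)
The plan is to leverage three ingredients: the Schur--Weyl duality $\mathscr B_r(\varrho,q)\cong \End_{\mathbf U_\kappa(\mathfrak g)}(V^{\otimes r})$ of Theorem~\ref{isom}, the self-duality of $V^{\otimes r}$ as bimodule (Corollary~\ref{duforb} and Corollary~\ref{cond}), and the adjoint pair $(\mathbf g,\mathbf f)$ with the fundamental relation $\mathbf g\mathbf f(T)\cong T$ for any summand $T$ of $V^{\otimes r}$ (Lemma~\ref{g}). Under these tools, questions about tilting modules for $\mathbf U_\kappa(\mathfrak g)$ become questions about projective and cell modules for $\mathscr B_r(\varrho,q)$. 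First I would verify Part~(1): since $V=\Delta(\epsilon_1)\cong V^\ast$, the natural representation is tilting, and tensor products of tiltings remain tilting, so $V^{\otimes r}=\bigoplus T(\lambda)^{m_\lambda}$. By Proposition~\ref{bmwab}(2) only $\lambda$ with $(f,\lambda)\in\Lambda_r$ can occur, and a counting argument using that $\End(V^{\otimes r})\cong\mathscr B_r(\varrho,q)$ and that the isomorphism classes of indecomposable summands of $V^{\otimes r}$ are in bijection with the simple $\mathscr B_r(\varrho,q)$-modules $D^{f,\lambda'}$ (which exist exactly when $\lambda'$ is $e$-restricted, i.e.\ $\lambda$ is $e$-regular) pins down the index set.

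For Parts~(2)(a) and (3)(a), I would apply the self-duality of $V^{\otimes r}$. In the $\mathfrak g\in\{\mathfrak{so}_{2n},\mathfrak{sp}_{2n}\}$ case, Corollary~\ref{cond} says $V^{\otimes r}\cong (V^{\otimes r})^\circ$ with $\circ$ the contragredient dual, and $\Delta(\mu)^\circ=\nabla(\mu)$. The standard natural isomorphism $\Hom(A,B^\circ)\cong\Hom(B,A^\circ)$ then furnishes a right $\mathscr B_r(\varrho,q)$-linear isomorphism $\mathbf f(\nabla(\mu))\cong\mathcal F(\Delta(\mu))$, which by Proposition~\ref{cell-weyl} equals $C(f,\mu')$. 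The case $\mathfrak g=\mathfrak{so}_{2n+1}$ is analogous but uses Corollary~\ref{duforb}, where the self-duality is with respect to the antipode-dual and the anti-involution $\tilde\sigma$ rather than $\sigma$; this is precisely what replaces $C$ by $\tilde C$, and the shift $\nabla(\mu)\leftrightarrow\nabla(-\omega_0\mu)$ records the twist by the longest Weyl group element implicit in the antipode.

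For (2)(b) and (3)(b), since $T(\lambda)$ is a direct summand of $V^{\otimes r}$ by Part~(1), $\mathbf f(T(\lambda))$ is a direct summand of $\mathbf f(V^{\otimes r})\cong \mathscr B_r(\varrho,q)$, hence projective; it is indecomposable because any non-trivial splitting would, after applying $\mathbf g$, split $\mathbf g\mathbf f(T(\lambda))\cong T(\lambda)$. It remains to match the label: using the $\Delta$-filtration of $T(\lambda)$ with unique top composition factor $\Delta(\lambda)$, applying $\mathbf f$ and using Part~(a) gives a filtration of $\mathbf f(T(\lambda))$ whose top section is $C(f,\lambda')$; since $C(f,\lambda')$ has head $D^{f,\lambda'}$, the PIM $\mathbf f(T(\lambda))$ must equal $P(f,\lambda')$. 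Finally, for Parts (2)(c) and (3)(c), since $T(\lambda)$ has a Weyl filtration the identity $(T(\lambda):\Delta(\mu))=\dim\Hom_{\mathbf U_\kappa(\mathfrak g)}(T(\lambda),\nabla(\mu))$ holds by Ext-vanishing $\Ext^1(\Delta,\nabla)=0$. Combining Lemma~\ref{g} with the adjunction (\ref{ajoint}) transfers this to $\dim\Hom_{\mathscr B_r(\varrho,q)}(\mathbf f(T(\lambda)),\mathbf f(\nabla(\mu)))=\dim\Hom(P(f,\lambda'),C(\ell,\mu'))$, and the classical identity $\dim\Hom(P(f,\lambda'),M)=[M:D^{f,\lambda'}]$ for a projective cover completes the argument.

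The main obstacle I expect is the careful bookkeeping of anti-involutions and dualities: Lemma~\ref{bi}(3) uses $\tilde\sigma=\sigma\circ\gamma$ while Lemma~\ref{contra}(3) uses $\sigma$, and the functor $\mathcal F$ comes equipped with the twist $\alpha\in\{\mathrm{id},\gamma\}$ in~(\ref{lrm}). One must check that the natural map $\mathbf f(\nabla(\mu))\to\mathcal F(\Delta(\mu))$ is $\mathscr B_r(\varrho,q)$-equivariant on the nose (not merely up to the appropriate involution), and likewise that the chain of $\Hom$-identifications in Part~(c) respects the left/right conventions set by~(\ref{lbrt}) and~(\ref{lrm}). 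It is precisely this tracking that distinguishes Part~(2) from Part~(3) and produces $\tilde C$, $\tilde P$, $\tilde D^{f,\lambda'}$, and the $-\omega_0$ twists throughout the orthogonal case in odd rank.
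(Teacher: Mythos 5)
Your proposal tracks the paper's proof closely: part (a) via the bimodule self-dualities of Corollaries~\ref{duforb} and \ref{cond} together with Proposition~\ref{cell-weyl}, and part (c) via Lemma~\ref{g}, the adjunction \eqref{ajoint}, the identity $(T(\lambda):\Delta(\mu))=\dim\Hom_{\mathbf U_\kappa(\mathfrak g)}(T(\lambda),\nabla(\mu))$, and $\dim\Hom(P(f,\lambda'),M)=[M:D^{f,\lambda'}]$ are exactly the paper's steps. The bookkeeping issues you flag at the end (which involution goes with which case, and why $\tilde C,\tilde P,\tilde D$ and $-\omega_0$ appear only for $\mathfrak{so}_{2n+1}$) are indeed the content of the explicit computations \eqref{billl}--\eqref{eq1} and \eqref{billll} in the paper, and your diagnosis of their source is right.

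There is, however, a circularity in how you establish (1) and the labelling in (2)(b), and this is where your route genuinely diverges from the paper's. Your argument for (2)(b) takes an indecomposable summand, applies $\mathbf f$ to the $\nabla$-filtration (not the $\Delta$-filtration --- part (a) computes $\mathbf f(\nabla(\mu))$, and it is $\nabla(\lambda)$ that sits as the top quotient of $T(\lambda)$, with $\Delta(\lambda)$ as a submodule), obtains a surjection onto $C(f,\lambda')$, and invokes the simple head $D^{f,\lambda'}$ of the cell module. But the cell module has a simple head only when $D^{f,\lambda'}\neq 0$, i.e.\ only when $\lambda$ is $e$-regular --- which is precisely what part (1) is supposed to deliver, while your proof of (1) is a counting bijection that identifies the \emph{number} of summands with the number of simples but does not by itself exclude a summand $T(\mu)$ with $\mu$ not $e$-regular. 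The paper closes this loop differently and without any regularity hypothesis on the summand's highest weight: writing $\mathbf f(T(\mu))=P(f,\lambda')$ (every PIM has this form for some $e$-regular $\lambda$), it evaluates $\dim\Hom(P(f,\lambda'),C(k,\nu'))=[C(k,\nu'):D^{f,\lambda'}]$ at $\nu=\mu$ and at $\nu=\lambda$; the first gives one dominance inequality from the cell structure, the second gives $\Hom(T(\mu),\nabla(\lambda))\neq 0$ and hence $\lambda\unlhd\mu$ because $\mu$ is the highest weight of $T(\mu)$, and the two inequalities force $(f,\lambda')=(\ell,\mu')$, proving (1) and (2)(b) simultaneously. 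You should either adopt this dominance comparison or supply a separate argument ruling out non-$e$-regular highest weights among the summands; as written your proof of (1) assumes the conclusion of (2)(b) and vice versa.
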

\begin{proof}
Suppose $\mathfrak g\in \{\mathfrak{so}_{2n}, \mathfrak{sp}_{2n}\}$. Let  $ \Psi:  \mathcal F(\Delta(\mu)) \rightarrow  \Hom_{\mathbf U_\kappa (\mathfrak g)}((V^{\otimes r})^\circ,\Delta(\mu)^\circ)
$  be the $\kappa$-linear isomorphism  such that
\begin{equation} \label{billl} [\Psi(\phi)(v^\circ )](x)=\langle v,\phi(x)\rangle=v^\circ (\phi(x)), \forall  \phi\in \mathcal F(\Delta(\mu)), v\in V^{\otimes r}, x\in \Delta(\mu) \end{equation} where  $\langle \ ,\ \rangle $ is the bilinear form  defined in \eqref{mf1}  and $v^\circ \in \Hom_\kappa (V^{\otimes r}, \kappa )$ is defined in \eqref{duall}. So, for any $b\in \mathscr B_r(\varrho, q)$,
\begin{equation} \label{eq1} \begin{aligned}   (\Psi(b\phi)(v^\circ ))  (x) & = \langle v,b\phi(x)\rangle,   \ \text{ by \eqref{billl}}, \\
& =\langle v,\phi(x)\sigma(b)\rangle   \text{ by \eqref{lrm}},\\ & = (v^\circ  b)(\phi(x))   \text{ by  \eqref{contramod1} and \eqref{duall},}    \\ &=[\Psi(\phi)(v^\circ b)]( x), \text{ by \eqref{billl}.}  \\
\end{aligned} \end{equation}
and hence   by \eqref{lbrt}, $\Psi(b\phi)(v^\circ )= \Psi(\phi)(v^\circ b)=(b(\Psi(\phi)))(v^\circ)$.
So $\Psi(b\phi)=b\Psi(\phi)$.  By  Corollary~\ref{cond}, and  $\nabla(\mu)\cong \Delta(\mu)^\circ$ for any dominant integral weight $\mu$ (see \cite[Proposition~4.1.6]{Dk}), $\mathbf f(\nabla(\mu))\cong \mathcal F(\Delta(\mu))$ as left $\mathscr B_r(\varrho, q)$-modules. Via anti-involution, it can be considered as isomorphism for right $\mathscr B_r(\varrho, q)$-modules.  Finally, the last isomorphism in 2(a) follows from Proposition~\ref{cell-weyl}.

By \cite[II,Proposition 2.1(c)]{ARS}, the functor $\mathbf f$ in Definition~\ref{mfg} induces a category equivalence between  direct sums of direct summands of
the left $ \mathbf U_\kappa (\mathfrak g)$-module $V^{\otimes r}$ and direct sums of direct summands of left  $\mathscr B_r(\varrho, q)$-module  $\mathscr B_r(\varrho, q)$. So $ \mathbf f(T(\mu))$ is a principal  indecomposable  left $\mathscr B_r(\varrho, q)$-module if
$T(\mu)$ is an indecomposable direct summand of $V^{\otimes r}$. By the universal property, the Weyl module $\Delta(\mu)$ is  a submodule of  $T(\mu)$. So, $\dim  \Hom_{\mathbf U_\kappa(\mathfrak g)} (\Delta(\mu), V^{\otimes r})\neq 0$. By arguments in the proof of Proposition~\ref{bmwab}(2), $\mu$ has to be a partition such that $(\ell, \mu)\in \Lambda_r$ for some $\ell$.   So,
 $(\ell, \mu')\in \Lambda_r$.
For any $(k,\nu')\in\Lambda_{r}$, we have   $\kappa$-linear isomorphism
\begin{equation} \label{decom-hom}\begin{aligned}
 \Hom_{\mathbf U_\kappa (\mathfrak g)} (T(\mu),\nabla(\nu)) & \cong \Hom_{\mathbf U_\kappa (\mathfrak g)} (\mathbf g\mathbf f(T(\mu)),\nabla(\nu)), \text{ by  Lemma~\ref{g}},\\
& \cong  \Hom_{\mathscr B_r(\varrho, q)} (\mathbf f (T(\mu)) , \mathbf f(\nabla(\nu))),   \text{ by \eqref{ajoint} },\\
&\cong
\Hom_{\mathscr B_r(\varrho, q)}( P(f,\lambda'),C(k,\nu')), \text{ by 2(a), }\\
\\ \end{aligned}
\end{equation}
for some $(f, \lambda')\in \Lambda_{r}$ such that $\mathbf f(T( \mu))=P(f,\lambda')$ with $\lambda$ being $e$-regular.
Note that $ P(f,\lambda')$ is a principal indecomposable  module, we have
\begin{equation}\label{decom-hom1}   \dim_\kappa\Hom_{\mathscr B_r(\varrho, q)}( P(f,\lambda'),C(k,\nu'))=[C(k,\nu'): D^{f, \lambda'}].\end{equation}
If we assume $\nu=\mu$, then  $(\ell,\mu')\unrhd (f,\lambda')$.   If we assume  $\nu=\lambda$, then  $\Hom_{\mathbf U_\kappa (\mathfrak g)}(T(\mu),\nabla(\lambda))\neq 0$.
Since $\mu$ is the highest weight of  $T(\mu)$,
  $ \lambda\unlhd  \mu$ and   either $f>\ell$ or $f=\ell$,  forcing  $(\ell,\mu')\unlhd (f,\lambda')$. So,   $f=\ell$ and $\mu=\lambda$.
This proves 2(b) as left $\mathscr B_r(\varrho, q)$-modules. Via anti-involution $\sigma$ in Lemma~\ref{anti}, we have 2(b) as right  $\mathscr B_r(\varrho, q)$-modules.  In particular, we have proved  (1) for $\mathfrak g\in \{\mathfrak{so}_{2n}, \mathfrak {sp}_{2n}\}$. Finally, 2(c) follows from \eqref{decom-hom}-\eqref{decom-hom1}
and $(T( \lambda): \Delta( \mu))=\dim_\kappa \Hom_{\mathbf U_\kappa (\mathfrak g)}(T(\mu),\nabla(\nu))$.

 Suppose  $\mathfrak g=\mathfrak{so}_{2n+1}$. Let  $ \Psi:  \mathcal F(\Delta(\mu))\rightarrow  \Hom_{\mathbf U_\kappa(\mathfrak g)}((V^{\otimes r})^*,\Delta(\mu)^*)$ be the $\kappa$-linear isomorphism
  such that
\begin{equation} \label{billll} \Psi(\phi)(v^*)(x)= \langle v,\phi(x)\rangle , \forall  \phi\in \mathcal F(\Delta(\mu)), v\in V^{\otimes r}, x\in \Delta(\mu), \end{equation}
 where  $\langle \ , \ \rangle $ is defined \eqref{bi12} and $v^*$ is defined in a natural way. By arguments similar to those for 2(a)-(b), one can check that $\Psi$ is a left $\mathscr B_r(\varrho, q)$-isomorphism.
 So, 3(a) follows from   Corollary~\ref{duforb}, Proposition~\ref{cell-weyl} and the fact $\nabla(-\omega_0\mu)\cong \Delta(\mu)^*$ (see \cite[Proposition~3.3]{Apw}).
Finally, 3(b)-(c) and (1) for  $\mathfrak{so}_{2n+1}$ follow from arguments similar to those for 2(b) and (1) for $\mathfrak {so}_{2n}$ and $\mathfrak {sp}_{2n}$. \end{proof}

We close the paper by giving the following remarks on decomposition numbers of $\mathscr B_r(\varrho, q)$ over $\mathbb C$.

 \begin{remark}\begin{enumerate} \item Suppose that  $\varrho\not\in \{q^a, -q^a\mid a\in \mathbb Z\}$. In \cite{RSsin}, Rui and  Si have proved that $\mathscr B_r(\varrho, q)$ is Morita equivalent to $\bigoplus_{i=0}^{\lfloor r/2\rfloor} \mathscr H_{r-2i}$ over $\kappa$. \begin{itemize} \item[(a)] If  $q^2$ is not a root of unity,
 $\mathscr B_r(\varrho, q)$  is split semisimple \footnote{ A necessary and sufficient condition on the semisimplicity of $\mathscr B_r(\varrho, q)$ has been given in  \cite{RSBMW}. See also \cite{Wen} for some partial results over $\mathbb C$.} and the decomposition matrix of $\mathscr B_r(\varrho, q)$  is the identity matrix.
 \item[(b)] If $q^2$ is a root of unity and $\kappa$ is $\mathbb C$, by Ariki's famous results on LLT conjecture  in \cite{Ari},
  decomposition numbers of  $\mathscr B_r(\varrho, q)$ are determined by values of certain  inverse Kazhdan-Lusztig polynomials associated with some extended affine Weyl groups of type $A$ at $q=1$. In this case,  there is no restriction on $e$, the order of $q^2$.\end{itemize}
\item Suppose $\varrho\in \{-q^{ a}, q^a\mid a\in \mathbb Z\}$. \begin{itemize} \item[(a)] If $q^2$ is not a root of unity,
    Xu showed  that $\mathscr B_r(\varrho, q)$ is multiplicity free over $\mathbb C$~\cite{Xu}.
\item[(b)] If $\kappa=\mathbb C$ and $o(q^2)=e$, we assume  $q=\exp(2\pi i/e)$ if $e$ is odd and
     $q=\exp(\pi i/e)$ if $e$ is even. Further, we assume that  $q^{1/2}=\exp(\pi i/2e)$ if $e$ is even.  In this case, $q^{1/2}$ is a primitive $4e$-th root of unity. If $e$ is odd, $-q^{2n}\in \{-q^{2k+1}\mid k\in \mathbb Z\}$. If $e$ is even, $q^e=-1$ and $-q^{2n}=q^{2n+e}$. Finally, if $\varrho=q^{2n}$ and $e$ is odd, $\varrho=q^{2n+e}$. In summary, when we calculate decomposition numbers of $\mathscr B_r(\varrho, q)$
     for  $\varrho\in \{-q^{ a}, q^a\mid a\in \mathbb Z\}$ and $q^2$ being  a root of unity,   we can always assume that $\varrho$'s are given in \eqref{varrho}. Moreover, we can assume $e$ is even if $\varrho=q^{2n}$ for some $n\in \mathbb Z$.
    By  Theorem~\ref{schur},  decomposition numbers of $\mathscr B_r(\varrho, q)$ are determined by  multiplicities of Weyl modules in certain indecomposable tilting modules for $\mathbf U_\kappa(\mathfrak g)$.  Soergel~\cite{Soe}
     has described multiplicities of Weyl modules in certain indecomposable tilting modules for $\mathbf U_\kappa(\mathfrak g)$ via
      the equivalence of categories between modules for quantum groups at roots of unity and corresponding module categories for Kac-Moody algebras in \cite{Soe1}. Due to \cite{KL1}, this equivalence  is only proved when $e \geq 29$.  In principal, we know decomposition numbers  of $\mathscr B_r(\varrho, q)$ for $\varrho\in \{-q^a, q^a\mid a\in \mathbb Z\}$
      over $\mathbb C$ only if  $e\ge 29$.\end{itemize}\end{enumerate}\end{remark}

\providecommand{\bysame}{\leavevmode ---\ } \providecommand{\og}{``}
\providecommand{\fg}{''} \providecommand{\smfandname}{and}
\providecommand{\smfedsname}{\'eds.}
\providecommand{\smfedname}{\'ed.}
\providecommand{\smfmastersthesisname}{M\'emoire}
\providecommand{\smfphdthesisname}{Th\`ese}

\end{document}